\documentclass[10pt]{amsart}
\usepackage[margin=1.2in]{geometry}
\usepackage{color}
\usepackage{amssymb}

\usepackage{graphicx}
\graphicspath{{./}}
\usepackage{subcaption}
\usepackage{multirow}
\usepackage{bbm}
\usepackage{lipsum}
\usepackage{amsfonts}
\usepackage{graphicx}
\usepackage{epstopdf}
\usepackage{subcaption}
\usepackage{amssymb}
\usepackage{mathtools}
\usepackage{bbm}
\usepackage{booktabs}
\usepackage{algorithm}
\usepackage{algpseudocode}
\usepackage{tikz}
\usetikzlibrary{arrows.meta, positioning}

\definecolor{colorW1Fill}{rgb}{0.8, 0.9, 1.0} 
\definecolor{colorW1Draw}{rgb}{0.2, 0.4, 0.6} 
\definecolor{colorW2Fill}{rgb}{0.8, 1.0, 0.8} 
\definecolor{colorW2Draw}{rgb}{0.2, 0.6, 0.2} 

\newtheorem{theorem}{Theorem}[section]

\newtheorem{proposition}{Proposition}[section]

\theoremstyle{definition}

\theoremstyle{remark}
\newtheorem{remark}[theorem]{Remark}

\numberwithin{equation}{section}

\usepackage{graphicx}

\DeclareGraphicsExtensions{.pdf,.png,.jpg}

\newcommand{\p}{\partial}

\graphicspath{{./figures/}}

\title{An Adaptive Fourier Spectral Method for the Vlasov-Poisson system}

\author[Seung Yeon Cho]{Seung Yeon Cho}
\address[Seung Yeon Cho]{\newline Department of Mathematics, 
		Gyeongsang National University, 52828 Jinju, Republic of Korea}
\email{chosy89@gnu.ac.kr}

\author{Giovanni Russo}
\address[Giovanni Russo]{\newline Department of Mathematics and Computer Science, University of Catania, 95125 Catania, Italy}
\email{giovanni.russo1@unict.it}
\date{\today}

\begin{document}
	\maketitle
	\begin{abstract}
		Numerical simulation of the Vlasov–Poisson system faces fundamental challenges due to phase-space filamentation. Standard spectral methods rely on artificial filtering to suppress errors, which inadvertently degrades physical structures and accuracy over time. This paper proposes a dynamically adaptive Fourier spectral method combined with high-order time splitting to overcome these limitations. To prevent filamentation-induced aliasing, we dynamically expand the wavenumber domain via adaptive zero-padding whenever spectral tail monitoring detects emerging fine-scale structures. Acting as an exact trigonometric interpolation, it inherently avoids the artificial smoothing of traditional filters, preserving mass and effectively maintaining the $L^2$-norm within the dynamically resolved scales. Numerical experiments on Landau damping and various instability problems validate the robustness of our scheme.
	\end{abstract}
	
	\section{Introduction}
The motion of electrons in a self-consistent electric field is governed by the Vlasov–Poisson system, which is written in nondimensional form as
		\begin{align}\label{VP}
			\begin{split}
				&\frac{\partial{f}}{\partial{t}} + v \cdot \nabla_x{f} + \mathbb{E}(x,t) \cdot \nabla_v{f}=0\cr
				&\mathbb{E}(x,t)= -\nabla_x\phi(x,t), \qquad  -\Delta_x \phi(x,t)= \rho(x,t) - m_{\text{ion}},\cr
				&\rho(x,t)=\int_{\mathbb{R}^{d_v}} f(x,v,t)\,dv, \quad m_{\text{ion}}=\frac{1}{L_x^{d_x}}\int_{\mathbb{T}^{d_x}}\int_{\mathbb{R}^{d_v}} f(x,v,t) \,dv\,dx.
			\end{split}
		\end{align}
Here, $f(x,v,t)$ denotes the velocity distribution function in phase space, with $(x,v) \in \Omega := \mathbb{T}^{d_x} \times \mathbb{R}^{d_v}$ for $t > 0$. The spatial domain $\mathbb{T}^{d_x}$ represents the $d_x$-dimensional torus $\mathbb{R}^{d_x}/(L_x\mathbb{Z}^{d_x})$ with period $L_x>0$, while $\mathbb{R}^{d_v}$ denotes the $d_v$-dimensional velocity space. The electric field is denoted by $\mathbb{E}(x,t)$, and $\phi(x,t)$ is the corresponding self-consistent electrostatic potential. The electron charge density is given by $\rho(x,t)$, while $m_{\text{ion}}$ represents the uniformly distributed background ion charge density. Under spatially periodic boundary conditions, global charge neutrality is naturally assumed, meaning that the integral of the right-hand side of the Poisson equation \eqref{VP} over the spatial domain vanishes.

The distribution function $f$ satisfies several fundamental properties. 
First, it obeys a maximum principle: if the initial data satisfies $0 \leq f_0(x,v) \leq M$ for some constant $M > 0$ for all $(x,v)\in \Omega$, then the solution remains bounded as
\[
0 \leq f(x,v,t) \leq M, \quad \forall t>0, \ (x,v)\in \Omega.
\]
Moreover, the total mass of $f$ is conserved over time, i.e.,
\begin{align*}
	\frac{d}{dt}\int_{\mathbb{R}^{d_v}}\int_{\mathbb{T}^{d_x}} f\,dx\,dv = 0.
\end{align*}
In addition, the $L^p$ norm of $f$ is preserved for any $1 \leq p < \infty$, namely,
\[
\|f\|_p = \left(\int_{\mathbb{R}^{d_v}}\int_{\mathbb{T}^{d_x}} |f|^p\,dx\,dv \right)^{1/p}.
\]
The system also conserves the total energy given by
\begin{equation*}
	\text{Energy} = \int_{\mathbb{R}^{d_v}}\int_{\mathbb{T}^{d_x}} \frac{|v|^2}{2} f\,dx\,dv 
	+ \int_{\mathbb{T}^{d_x}} \frac{|\mathbb{E}|^2}{2}\,dx,
\end{equation*}
as well as the entropy
\[
\text{Entropy} = \int_{\mathbb{R}^{d_v}}\int_{\mathbb{T}^{d_x}} f \log \frac{1}{f}\,dx\,dv.
\]

		From a numerical perspective, computing the numerical solution of the Vlasov–Poisson system remains a fundamental challenge in computational plasma physics due to its high dimensionality and the development of fine-scale structures in phase space. In particular, the severe filamentation of the distribution function imposes strict constraints on both computational efficiency and numerical stability, demanding highly accurate resolution in both spatial and velocity variables.
		
		Over the past decades, a wide range of numerical methods have been developed to handle these challenges. Based on how the phase space is discretized, these can be broadly classified into particle-based and grid-based methods. 
		
		Particle approaches, most notably the Particle-In-Cell (PIC) method, evolve macro-particles along characteristic trajectories \cite{BL,DDSV,HE}. Relevant mathematical convergence study can be found in \cite{GV}. One of the most significant advantages of particle methods is their ability to efficiently simulate high-dimensional systems without suffering from the exponential growth of computational cost associated with grid-based methods  \cite{BL}. Furthermore, they are naturally suited for parallel computing and relatively straightforward to implement. However, particle methods are inherently constrained by statistical noise, which fundamentally limits their accuracy in long-time simulations.
		
		In contrast, grid-based methods \cite{AV,FS} discretize the distribution function directly on fixed phase-space grids. Such approaches circumvent statistical noise and can achieve high accuracy. Accordingly, various techniques—finite difference \cite{QS}, finite volume \cite{BH}, and spectral methods \cite{FX}—have been developed.
		Within such a grid-based framework, a popular treatment of the temporal variable is to employ the semi-Lagrangian (SL) approach \cite{CRS, SRBG}. Often coupled with operator splitting techniques \cite{CCFM}, the SL method allows one to bypass restrictive Courant–Friedrichs–Lewy (CFL) conditions by separately treating the convective and drift terms. For applications where splitting errors must be strictly avoided, fully unsplit methods provide a robust alternative \cite{QR}. Additionally, a growing body of literature focuses on conservative SL methods designed to guarantee the exact preservation of physical invariants \cite{CGQ,CBRY}. 
		Alternatively, purely grid-based methods have proven highly effective for handling asymptotic limits of Vlasov system; notably, asymptotic preserving schemes based on micro-macro decompositions have been successfully developed for multiscale scaling limits \cite{CL}. We also refer to a convergence study in \cite{BM}.

		Among the grid-based methods, in this paper, our interest lies on the Fourier spectral methods \cite{CK} where Fourier expansions are applied in both spatial and velocity coordinates together with Strang splitting. The spectral method yields superior convergence rates for smooth solutions and benefits greatly from the computational efficiency of the Fast Fourier Transform (FFT), however, they suffer from two primary sources of numerical degradation, as fundamentally identified by Klimas and Farrell \cite{KF}. The first issue stems from the periodicity assumption inherent to the Fourier transform. If particles accelerate beyond the truncated velocity boundary, they unphysically re-enter from the opposite side of the domain, creating spurious wrap-around effects that severely contaminate the physical solution. The second issue arises from the severe filamentation of the distribution function, which continuously cascades energy into high-wavenumber modes. Once these unresolved modes exceed the Nyquist limit of the grid, they induce severe aliasing errors that rapidly destabilize the simulation. Although a spectral filtering method was utilized in \cite{KF} to secure numerical stability, an over-reliance on such filtering during long-time simulations can inadvertently smear out naturally emerging fine-scale structures, ultimately leading to a substantial loss of accuracy and physical fidelity.

		This highlights a fundamental trade-off between memory constraints and physical accuracy on fixed grids. Unlike filtering techniques that attempt to stabilize the solution within a strict memory budget—often at the cost of physical fidelity—an alternative philosophy is to dynamically supply additional computational resources (i.e., grid points) precisely where they are most needed. Guided by this perspective, various adaptive techniques have been explored in the broader context of the VP system to fully resolve fine-scale structures without artificial smearing. These include adaptive semi-Lagrangian methods \cite{A, SKT}, adaptive rank-based methods \cite{EL, GQ, K,  ZHCQ}, and dynamically scaled Hermite spectral methods \cite{SWW}.
		
		
		Building upon the original work \cite{CK}, we introduce a highly efficient, dynamically adaptive Fourier spectral method combined with high-order time splitting. This approach overcomes both fundamental limitations discussed in \cite{KF}—spurious wrap-around and filamentation-induced aliasing—without relying on artificial filtering.

		To completely prevent spurious velocity wrap-around, we prescribe a sufficiently large, fixed velocity domain $[-v_{\max}, v_{\max}]$ from the onset, ensuring that the distribution function decays to numerically negligible values well before reaching the boundaries. While it is theoretically possible to dynamically widen the velocity boundaries (e.g., via zero-padding in the velocity space) as fast-moving particles accelerate, we deliberately avoid this approach. Enlarging the velocity domain merely adds grid points to empty far-field regions without improving the internal grid resolution (i.e., $\Delta v = 2v_{\max}/N_v$). Since the root cause of aliasing is severe phase-space filamentation—which necessitates a progressively finer grid—expanding the domain boundaries is fundamentally ineffective against this degradation. 
		
		Because pre-allocating a large $v_{\max}$ inherently neutralizes the wrap-around effect, we opt to keep the phase-space boundaries fixed and restrict our adaptive refinement strictly to the spectral domain. Instead of artificially filtering out the high-mode spectral energy caused by filamentation, we introduce an adaptive zero-padding technique in Fourier space. By continuously monitoring the magnitudes of the high-frequency Fourier modes in both the spatial and velocity directions, we dynamically increase the spectral resolution precisely when the threshold indicates the onset of severe filamentation.

%
%
%
		
		This strategy ensures that every dynamically allocated grid point is invested exclusively in enhancing physical resolution by decreasing $\Delta v$. Because this adaptive zero-padding acts as an exact trigonometric interpolation, it actively accommodates the fine-scale filaments with spectral accuracy while avoiding the artificial smoothing inherent to filters. Consequently, it strictly preserves mass and effectively maintains the $L^2$-norm within dynamically resolved scales, without requiring computationally expensive a posteriori constraints. By automatically adjusting the resolution only when and where necessary, the proposed approach completely circumvents the need to prescribe excessively fine grids a priori, achieving exceptional accuracy in capturing complex phase-space structures while remaining highly competitive in computational efficiency and memory usage.

	The outline of this paper is organized as follows: In section 2, we review high-order time splitting methods. Then, in section 3, we describe the spectral method, and propose adaptive refinement techniques based on Fourier modes. Next, in section 4, we present several numerical examples which demonstrate the efficiency of our methods. Finally, in the last section we draw conclusions. 

	\section{High-Order Time Splitting Methods}
	To solve the Vlasov--Poisson (VP) system efficiently, we adopt the operator splitting framework. The fundamental advantage of this approach is that it decouples the VP equation into a sequence of one-dimensional, constant-coefficient linear advection equations. This decoupling allows us to apply highly accurate phase-space discretizations, such as the Fourier spectral method, to each sub-step independently.
	
	In the remainder of this paper, we define the time evolution operators as follows:
	\begin{itemize}
		\item $\mathcal{D}^{\Delta t}$: The drift operator in velocity space over a time step $\Delta t$, which solves
		\begin{align}\label{drift}
			\frac{\p f}{\p t} + E(x,t) \frac{\p f}{\p v} = 0.
		\end{align}
		During this step, the electric field $E$ is completely determined by the spatial density and is treated as constant in time.
		
		\item $\mathcal{T}^{\Delta t}$: The advection operator in physical space over a time step $\Delta t$, which solves
		\begin{align}\label{transport}
			\frac{\p f}{\p t} + v \frac{\p f}{\p x} = 0.
		\end{align}
		Here, the velocity $v$ acts as a constant advection parameter.
	\end{itemize}
	By alternating these two operators, the electric field $E$ is naturally updated via the Poisson equation before each velocity drift step $\mathcal{D}$. Below, we review three time-splitting methods of increasing formal accuracy.
	
	\subsection{2nd-Order Strang Splitting}
	The foundational time integration is performed using the well-known second-order Strang splitting scheme. The operator $\mathcal{S}^{\Delta t}$, which advances the solution from $t_n$ to $t_{n+1} = t_n + \Delta t$, is symmetrically decomposed as:
	\begin{equation*}
		\mathcal{S}^{\Delta t} := \mathcal{D}^{\Delta t/2} \circ \mathcal{T}^{\Delta t} \circ \mathcal{D}^{\Delta t/2}.
	\end{equation*}
	Due to its symmetric formulation, this method achieves $\mathcal{O}(\Delta t^2)$ accuracy and preserves the time-reversibility of the underlying Hamiltonian system.
	
	\subsection{4th-Order Yoshida Splitting}
	To achieve higher temporal accuracy, we employ the fourth-order Yoshida splitting method. This scheme is constructed by composing three consecutive Strang splitting steps with specifically chosen fractional time increments. The operator $\mathcal{Y}^{\Delta t}$ is defined as:
	\begin{equation*}
		\mathcal{Y}^{\Delta t} := \mathcal{S}^{x_1\Delta t} \circ \mathcal{S}^{x_0\Delta t} \circ \mathcal{S}^{x_1\Delta t},
	\end{equation*}	
	where the fractional weights are given by $x_0 = -\frac{2^{1/3}}{2-2^{1/3}}$ and $x_1 = \frac{1}{2-2^{1/3}}$. Although $x_0$ is negative, which implies a backward-in-time integration step, it does not induce numerical instability in our framework because the spectral advection solves \eqref{drift} and \eqref{transport} exactly and reversibly.
	
	\subsection{6th-Order Splitting Method}
	For highly accurate, long-term simulations, standard splitting methods require an impractically large number of stages. To circumvent this, we implement a highly optimized 6th-order, 11-stage splitting method tailored specifically for the Vlasov--Poisson system \cite{CCFM}. By exploiting the specific commutator structures of the VP equations, this method significantly reduces the splitting error. The update operator $\mathcal{M}^{\Delta t}$ is given by:
	\begin{align}\label{meren}
		\mathcal{M}^{\Delta t} := \mathcal{D}^{\Delta t g_1} \mathcal{T}^{a_1\Delta t} \mathcal{D}^{\Delta t g_2} \mathcal{T}^{a_2\Delta t} \mathcal{D}^{\Delta t g_3} \mathcal{T}^{a_3\Delta t} \mathcal{D}^{\Delta t g_3} \mathcal{T}^{a_2\Delta t} \mathcal{D}^{\Delta t g_2} \mathcal{T}^{a_1\Delta t} \mathcal{D}^{\Delta t g_1},
	\end{align}
	where the drift step coefficients $g_i$ incorporate modified forcing terms to achieve 6th-order accuracy with fewer stages:
	\begin{equation*}
		g_i = b_i + 2c_im_{\text{ion}} (\Delta t)^2 + 4d_im_{\text{ion}}^2 (\Delta t)^4 - 8e_im_{\text{ion}}^3 (\Delta t)^6, \quad i=1,2,3.
	\end{equation*}
	The constant $m_{\text{ion}}$ is determined by the ion charge density \eqref{VP}, and the precise values of the constants $a_i, b_i, c_i, d_i,$ and $e_i$ are provided in Table \ref{coeff meren}. This specific combination guarantees strictly time-reversible integration up to machine precision, making it an ideal choice for testing our adaptive spectral methods without interference from temporal truncation errors.
	
%
%
%

	\begin{table}[ht]
		\centering
		{\begin{tabular}{|c|c|c|c|}
				\hline
				\multicolumn{1}{ |c }{}&
				\multicolumn{1}{ |c }{$i=1$}& \multicolumn{1}{ |c  }{$i=2$} & \multicolumn{1}{ |c|  }{$i=3$}  \\
				\hline	
				\multicolumn{1}{ |c|  }{$a_i$}&{\small $0.168735950563437422448196$}        &{\small$0.377851589220928303880766$} 
				&{\small$-0.093175079568731452657924$}
				\\
				\multicolumn{1}{ |c|  }{$b_i$}&{\small$0.049086460976116245491441$} &{\small$0.264177609888976700200146$}
				&{\small$0.186735929134907054308413$}
				\\
				\multicolumn{1}{ |c|  }{$c_i$}&{\small$-0.000069728715055305084099$}
				&{\small$-0.000625704827430047189169$}
				&{\small$-0.002213085124045325561636$}
				\\
				\multicolumn{1}{ |c|  }{$d_i$}&
				{\small $0$} &{\small$-2.916600457689847816445691 \cdot 10^{-6}$} &{\small$3.048480261700038788680723 \cdot 10^{-5}$}
				\\
				\multicolumn{1}{ |c|  }{$e_i$}&
				{\small $0$}   & {\small $0$}  &{\small $4.985549387875068121593988 \cdot 10^{-7}$}
				\\
				\hline
				\hline
		\end{tabular}}
		\caption{Coefficients in \eqref{meren}.}\label{coeff meren}
	\end{table}

\section{Fourier Spectral Method with Adaptive Refinement}
In this section, we detail the proposed Fourier spectral method, augmented with an adaptive refinement strategy, for solving the Vlasov--Poisson system.

Since the Fourier spectral approach fundamentally requires periodic boundary conditions, we truncate the inherently unbounded velocity domain to a finite interval $[-v_{\max}, v_{\max}]$. The boundary $v_{\max}$ is chosen to be sufficiently large so that the distribution function $f$ decays to numerically negligible values at the boundaries. As long as the effective support of $f$ remains strictly confined within this truncated domain, the periodic extension accurately preserves the physical solution. Consequently, the Fourier representation can be applied to the velocity variable without introducing significant truncation errors or spurious wrap-around effects.

It is worth noting that the assumption of periodicity in the velocity variable is a standard and widely adopted technique in the development of spectral methods for broader kinetic equations, including the Boltzmann, Landau--Fokker--Planck, and Vlasov equations. This approach was originally introduced in \cite{PP} and subsequently analyzed for the homogeneous Boltzmann equation with various collision kernels \cite{PR}, before being extended to inhomogeneous cases \cite{FR}. Since then, numerous fast spectral methods based on this periodic framework have been extensively studied \cite{FMP, MP}. Similar velocity-space truncations and periodic assumptions have also been successfully applied to the Landau--Fokker--Planck equation \cite{PRT}.

\subsection{Fourier Spectral Method}Now, we describe the Fourier spectral method in one dimension. Let the phase space $(x,v) \in [0,L_x] \times [-v_{\max},v_{\max}]$ be discretized by uniform grids:
\begin{align*}
    x_l &= l \Delta x, \quad l = 0,\dots,N_x-1,\\
    v_m &= -v_{\max} + m \Delta v,\quad m = 0,\dots,N_v-1
\end{align*} 
where $\Delta x = L_x / N_x$, $\Delta v = L_v / N_v$ with $L_v = 2v_{\max}$. We denote by $f_{l,m}(t)$ the numerical approximation of the exact distribution function at the grid point $(x_l, v_m)$, i.e., $f_{l,m}(t) \approx f(x_l, v_m, t)$. 

Employing the discrete Fourier transform (DFT), the discrete distribution function $f_{l,m}$ can be mapped to its spectral representations in either the spatial or velocity domain. We denote the Fourier coefficients uniformly by $\hat{f}$, where the transformed variable is explicitly indicated by its corresponding wavenumber index: $\hat{f}_{k_x, m}$ (with respect to $x$) and $\hat{f}_{l, k_v}$ (with respect to $v$).
On the grid the truncated Fourier series in the spatial and velocity directions are written, respectively, as follows:
\begin{align*}
	f_{l,m}(t) &= \sum_{k_x=-N_x/2}^{N_x/2-1} \hat{f}_{k_x, m}(t) e^{i \alpha_{k_x} x_l},\\
	f_{l,m}(t) &= \sum_{k_v=-N_v/2}^{N_v/2-1} \hat{f}_{l, k_v}(t) e^{i \beta_{k_v} v_m},\end{align*} 
where the discrete wavenumbers are defined as $\alpha_{k_x} = 2\pi\frac{k_x}{L_x}$ and $\beta_{k_v} = 2\pi\frac{k_v}{L_v}$. The corresponding discrete Fourier coefficients $\hat{f}_{k_x, m}$ and $\hat{f}_{l, k_v}$ are computed via the forward DFTs over the respective grid points:
\begin{align*}
	\hat{f}_{k_x, m}(t) &= \frac{1}{N_x} \sum_{l=0}^{N_x-1} f_{l,m}(t) e^{-i \alpha_{k_x} x_l},\\
	\hat{f}_{l, k_v}(t) &= \frac{1}{N_v} \sum_{m=0}^{N_v-1} f_{l,m}(t) e^{-i \beta_{k_v} v_m}.
\end{align*}
The Fourier spectral method is based on operator splitting, where the Vlasov--Poisson system is decoupled into a transport step (spatial advection) and a drift step (velocity advection), solved alternately. At each substep, Fast Fourier Transforms (FFT) and inverse FFTs (IFFT) are applied in the corresponding variable to exactly evaluate spatial or velocity derivatives in the spectral domain.

In the \textbf{transport step} $\mathcal{T}^{\Delta t}$, governed by $\partial_t f + v \partial_x f = 0$, the distribution function is advanced by applying an exact phase shift in the $x$-directional Fourier space. For each discrete velocity level $v_m$, the advection acts simply as a multiplier to the Fourier coefficients:$$\hat{f}_{k_x, m}(t+\Delta t) = \hat{f}_{k_x, m}(t)\, e^{-i \alpha_{k_x} v_m \Delta t}.$$Subsequently, an inverse DFT in $x$ is applied to return the updated solution to the physical grid.

For the \textbf{drift step} $\mathcal{D}^{\Delta t}$, governed by $\partial_t f + E(x,t) \partial_v f = 0$, we first compute the macroscopic density on the grid, $\rho_l = \sum_{q=0}^{N_v-1} f_{l,m} \Delta v$, to update the electric field via the Poisson equation. Since the electric field relates to the electrostatic potential by $E = -\partial_x \phi$, we solve $\partial_x E = \rho - 1=:\eta$. Under the assumption of periodic boundary conditions, we solve this directly in the spatial Fourier space. The DFT of the Poisson equation yields an explicit expression for each Fourier mode:$$i \alpha_{k_x} \widehat{E}_{k_x} = \widehat{\eta}_{k_x}, \quad \text{for } k_x \neq 0,$$
where the zero mode $\widehat{E}_{0}$ is fixed to zero due to the neutrality condition. The discrete electric field $E_l$ in physical space is then retrieved via the IFFT. Finally, in the drift step, a forward DFT is performed with respect to $v$. For each spatial grid point $x_l$, the solution is advanced through a phase shift driven by the newly computed local electric field $E_l$:$$\hat{f}_{l, k_v}(t+\Delta t) = \hat{f}_{l, k_v}(t)\, e^{-i \beta_{k_v} E_l \Delta t},$$which is followed by a final IFFT in $v$ back to the phase space, thereby completing one full time step. In summary, each splitting cycle involves repeated FFT/IFFT operations across the $x$ and $v$ dimensions. This structure enables highly efficient computation while maintaining spectral accuracy. 

\subsection{Adaptive refinement in Fourier space}	
When solving VP system with Fourier spectral methods, the distribution function in phase space starts to be distorted as the aliasing errors from high-wavenumber affect the lower wave region. The main source of such errors come from insufficient number of grid points in wavenumber space. To overcome this, we employ an adaptive mesh refinement (AMR) algorithm that operates by monitoring the magnitude of Fourier modes. Our strategy is to dynamically detect the emergence of high-wave components, which are closely related to aliasing errors, and accordingly enlarge the computational domain in Fourier space, which leads to the refinement of the mesh for physical space after IFFT operations. At the end of each time step, we performe the refinement procedures independently in both the spatial ($x$) and velocity ($v$) directions. 

In the following description, we only explain the refinement procedure for $v$ direction. By symmetry, the same approach can be applied to the $x$ direction. 


\subsubsection{Method 1: Two-Window Spectral Comparison}	
To check the emergence of high-wavenumber components in the chosen direction, our first strategy is to consider two disjoint windows $W_{\text{inner}}$ and $W_{\text{outer}}$ in the Fourier space, both located near the high-wavenumber region:
\[
W_{\text{inner}} = [k_{\text{inner},L}, k_{\text{inner},R}], \quad W_{\text{outer}} = [k_{\text{outer},L}, k_{\text{outer},R}],
\]
with $W_{\text{outer}}$ positioned physically closer to the maximum resolved wavenumber than $W_{\text{inner}}$ (see Figure \ref{fig:spectral_windows}).
	\begin{figure}[htbp]
		\centering
		\begin{tikzpicture}[
			window/.style={rectangle, thick, rounded corners=2pt, minimum height=1cm, anchor=south},
			arrow/.style={-{Stealth[scale=0.8]}, thin, dashed},
			axis label/.style={below=3pt, font=\small},
			boundary label/.style={below=6pt, font=\footnotesize},
			]
			
			\draw [->, >=Stealth, thick] (-1,0) -- (11,0) node[right, font=\large] {$k$};

			\draw [thick, gray!60, smooth] (0, 3.5) .. controls (1, 0.5) and (5, 0.2) .. (10, 0.05) 
			node[above left, font=\small, text=gray!80] {$|\hat{f}_{l,k_v}|$};

			\draw (0,-0.1) -- (0,0.1); 
			\draw (10,-0.1) -- (10,0.1); 
			
			\node (originLabel) at (0,-0.2) [axis label] {$0$};
			\node (kmaxLabel) at (10,-0.2) [axis label] {$k_{max}$};

			\draw [dashed, thin, gray] (3, 0.1) -- (3, 1.2); 
			\draw [dashed, thin, gray] (5, 0.1) -- (5, 1.2);
			\draw [dashed, thin, gray] (7, 0.1) -- (7, 1.2); 
			\draw [dashed, thin, gray] (9, 0.1) -- (9, 1.2);
			
			\filldraw [fill=colorW1Fill, draw=colorW1Draw, window] (3, 0.8) rectangle (5, 1.2)
			node [midway, font=\large\bfseries] {$W_{\text{inner}}$};
			
			\filldraw [fill=colorW2Fill, draw=colorW2Draw, window] (7, 0.8) rectangle (9, 1.2)
			node [midway, font=\large\bfseries] {$W_{\text{outer}}$};

			\draw (3, -0.1) -- (3, 0.1); 
			\node (k1L) at (2.8, -0.6) [boundary label] {$k_{\text{inner},L}$};
			\draw[arrow] (k1L.north) -- (3, -0.05);
			
			\draw (5, -0.1) -- (5, 0.1); 
			\node (k1R) at (5.2, -0.6) [boundary label] {$k_{\text{inner},R}$};
			\draw[arrow] (k1R.north) -- (5, -0.05);
			
			\draw (7, -0.1) -- (7, 0.1); 
			\node (k2L) at (6.8, -0.6) [boundary label] {$k_{\text{outer},L}$};
			\draw[arrow] (k2L.north) -- (7, -0.05);
			
			\draw (9, -0.1) -- (9, 0.1); 
			\node (k2R) at (9.2, -0.6) [boundary label] {$k_{\text{outer},R}$};
			\draw[arrow] (k2R.north) -- (9, -0.05);

			\draw [thin, colorW2Draw, dashed] (10, 0.1) -- (10, 1.8);
			
		\end{tikzpicture}
		\caption{Two sampling windows $W_{\text{inner}}$ and $W_{\text{outer}}$ in the high-wavenumber region.
		}
		\label{fig:spectral_windows}
	\end{figure}
Next, we compute the root-mean-square (RMS) magnitude of the Fourier coefficients within each window for $l=0,...,N_x-1$:
\[
M_{\text{inner}}^l = \sqrt{\frac{1}{|W_{\text{inner}}|} \sum_{k_v \in W_{\text{inner}}} |\hat{f}_{l,k_v}|^2}, 
\quad
M_{\text{outer}}^l = \sqrt{\frac{1}{|W_{\text{outer}}|} \sum_{k_v \in W_{\text{outer}}} |\hat{f}_{l,k_v}|^2}.
\]
where $|A|$ is the number of elements in the set $A$. For each $l$, the quantity $M_{\text{inner}}^l$ represents the baseline magnitude in a moderately high-wavenumber region, while $M_{\text{outer}}^l$ captures the energy near the highest resolved modes. If significant energy reaches the inner window before fully polluting the outer window, i.e.,
\[
M_{\text{inner}}^l > C \, M_{\text{outer}}^l,
\]
for a prescribed constant $C > 1$, this condition serves as an early indicator that the current grid resolution will soon become insufficient. 
We evaluate this criterion across all spatial indices $l$. If the condition is triggered for any index $l$, we globally enlarge the computational wavenumber domain by increasing the resolution by a factor of $R > 1$, and apply zero-padding to the extended modes. This procedure effectively broadens the range of resolvable Fourier modes, accommodating the emerging fine-scale structures and robustly delaying the onset of aliasing.

The performance of the proposed adaptive refinement strategy is governed by three hyperparameters: (1) the size and location of the monitoring windows $W_{\text{inner}}$ and $W_{\text{outer}}$, (2) the threshold constant $C$ controlling sensitivity to high-wavenumber growth, (3) the refinement factor $R$ determines the extent of resolution refinement. Proper tuning of these parameters is essential to balance computational efficiency and accuracy, especially in problems exhibiting strong filamentation where high-wavenumber modes develop rapidly over time.

In this study, we adopt a specific set of hyperparameters based on numerical heuristic to ensure robust performance. The monitoring windows are positioned near the Nyquist limit $k_{max}$ to detect imminent aliasing, defined as $W_{\text{inner}} = [0.8k_{max}, 0.9k_{max}]$ and $W_{\text{outer}} = [0.9k_{max}, k_{max}]$. Placing $W_{\text{inner}}$ slightly away from the boundary provides a buffer zone that allows for early detection of high-wavenumber growth before it contaminates the entire spectrum. Furthermore, by defining $W_{\text{outer}}$ at the very edge of the resolved spectrum, we can sensitively monitor the flatness of the spectral tail, which is a key indicator of resolution loss. The threshold constant is set to $C = 10$, requiring the high-frequency tail to become relatively flat (i.e., within one order of magnitude) before triggering refinement, thus preventing premature domain expansion due to transient noise. Finally, the refinement factor is chosen as $R = 1.2$. 

\begin{remark}
	To ensure that the computational efficiency of the FFT algorithm is not compromised by non-power-of-two grid sizes, the newly expanded grid number is aggressively snapped to the nearest multiple of 128. This dynamic snapping preserves the mixed-radix optimization of the FFTW library of MATLAB while significantly extending the simulation's lifespan under memory constraints.
\end{remark}

\subsubsection{Method 2: Relative RMS Tail Indicator}
To provide a more global assessment of spectral degradation, an alternative approach evaluates the relative noise floor of the high-wavenumber region against the dominant macroscopic wave. Instead of comparing two local windows as in Method 1, we define a single boundary window, $W_{boundary}$, positioned at the outermost edge of the resolved spectrum. 

In the second approach, we compute the root-mean-square (RMS) magnitude of $\hat{f}_{l,k_v}$ over all indices corresponding to the boundary region. Letting $N_{boundry}$ be the number of wavenumbers in $W_{boundary}$, the RMS tail magnitude $\bar{E}_{tail}$ and the global peak magnitude $E_{peak}$ are defined as:$$\bar{E}_{tail} = \sqrt{ \frac{1}{N_{boundry} N_x}\sum_{l=0}^{N_x-1} \sum_{k_v \in W_{boundary}}  |\hat{f}_{l,k_v}|^2 },$$
$$E_{peak} = \max_{l, k_v} |\hat{f}_{l,k_v}|.$$
The ratio of these two quantities effectively represents the noise-to-signal ratio across the mixed phase space. If the relative magnitude of the tail exceeds a prescribed tolerance threshold, i.e.,$$\frac{\bar{E}_{tail}}{E_{peak}} > \tau_{tol},$$the computational grid is adaptively expanded. Unlike Method 1, which relies solely on the local shape and flatness of the tail, this indicator evaluates the absolute emergence of high-frequency white noise relative to the physical signal. This double summation ensures that the tail fluctuation is averaged over both the high-wavenumber spectrum and the entire untransformed physical domain, making the trigger highly robust against transient, low-amplitude floating-point artifacts. For our numerical implementation, the boundary window is set to $W_{boundary} = [k_{\text{boundary}}, k_{max}]$ with $k_{\text{boundary}}=0.8k_{max}$, and the tolerance is chosen as $\tau_{tol} = 10^{-6},10^{-9},10^{-12}$. Furthermore, to mitigate explosive memory consumption, adopt a fractional refinement factor of $R = 1.2$ as in the previous approach.

\begin{figure}[tbp]
	\centering
	\begin{tikzpicture}[
		window/.style={rectangle, thick, rounded corners=2pt, minimum height=1cm, anchor=south},
		arrow/.style={-{Stealth[scale=0.8]}, thin, dashed},
		axis label/.style={below=3pt, font=\small},
		boundary label/.style={below=6pt, font=\footnotesize},
		]
		
		\definecolor{colorBndryFill}{RGB}{230, 240, 255}
		\definecolor{colorBndryDraw}{RGB}{50, 100, 200}
		\definecolor{colorPeak}{RGB}{220, 20, 60}
		
		\draw [->, >=Stealth, thick] (-1,0) -- (11,0) node[right, font=\large] {$k_v$};
		
		\draw (0,-0.1) -- (0,0.1); 
		\draw (10,-0.1) -- (10,0.1); 
		
		\node (originLabel) at (0,-0.2) [axis label] {$0$};
		\node (kmaxLabel) at (10,-0.2) [axis label] {$k_{max}$};

		\draw [thick, gray!60, smooth] (0, 3.5) .. controls (1, 0.5) and (5, 0.2) .. (10, 0.05) 
		node[above left, font=\small, text=gray!80] {$|\hat{f}_{l,k_v}|$};
		
		\draw [dashed, colorPeak, thick] (0, 0) -- (0, 3.5);
		\filldraw [colorPeak] (0, 3.5) circle (2pt);
		\node [right, colorPeak, font=\small\bfseries] at (0.2, 3.3) {$\displaystyle E_{peak} = \max_{\forall l, \forall k_v} |\hat{f}_{l,k_v}|$};

		\draw [dashed, thin, gray] (7, 0.1) -- (7, 1.2); 
		\draw [dashed, thin, gray] (10, 0.1) -- (10, 1.2);
		
		\filldraw [fill=colorBndryFill, draw=colorBndryDraw, window] (7, 0.8) rectangle (10, 1.5)
		node [midway, font=\large\bfseries] {};
		
		\node [above=0.1cm, font=\small\bfseries, color=colorBndryDraw] at (8.5, 1.5) {RMS: $\bar{E}_{tail}$};
		
		\node [above=0.1cm, font=\small\bfseries, color=colorBndryDraw] at (8.5, 0.75) {$W_{boundary}$};

		\draw (7, -0.1) -- (7, 0.1); 
		\node (kL) at (6.8, -0.6) [boundary label] {$k_{\text{boundary}}$};
		\draw[arrow] (kL.north) -- (7, -0.05);
		
		\draw[arrow] (9.8, -0.45) -- (10, -0.05);
		
	\end{tikzpicture}
\caption{Spectral domain configuration for Method 2. The root-mean-square (RMS) magnitude $\bar{E}_{tail}$ is calculated within the high-wavenumber boundary window $W_{boundary}$ and compared against the global peak $E_{peak}$.}
	\label{fig:method2_window}
\end{figure}

	In the following proposition, we show that the adaptive refinement operation based on zero-padding in frequency space maintains the mass and $L^2$-norm of $f$. In the proposition, we denote the numerical solution at time $t^n$ by $f_{l,m}^n \approx f(x_l, v_m, t^n)$, and define the discrete total mass of $f$ at time $n$ by
	\begin{align}\label{discrete total mass}
		\text{Mass}(f^n) = \Delta x \Delta v \sum_{l,m} f_{l,m}^n.
	\end{align}
	Similarly, the discrete $L^p$ norm of $f$ at time $n$ for $=1,2$ is given by
	\begin{align}\label{discrete Lp}
		\|f^n\|_p = \left( \Delta x \Delta v \sum_{l,m} |f_{l,m}^n|^p \right)^{1/p}.
	\end{align}
	Note that $\|f^n\|_1\geq \text{Mass}(f^n)$, where the equality holds if and only if $f_{l,m}^n\geq 0$ for all $l,\,m$. 
\begin{proposition}\label{Prop 1}
	Let $f^n_{l,m}$ be the discrete distribution function represented on a uniform phase-space grid of size $N_x \times N_v$ at a fixed time step $n$, where $l$ and $m$ denote the spatial and velocity indices, respectively. Let the adaptively refined distribution function $f^{ext,n}_{l, m_{ext}}$ on a finer extended grid of size $N_x \times M_v$ (with $M_v > N_v$, where $m_{ext}$ denotes the extended velocity index) be obtained by zero-padding the high-wavenumber Fourier modes of $f^n_{l,m}$ along the velocity direction for each fixed spatial index $l$. Then, this one-dimensional adaptive refinement operation exactly preserves the total discrete mass and the global $L^2$-norm of the distribution function over the entire phase space. By symmetry, the exact preservation also holds for refinement along the spatial direction.
\end{proposition}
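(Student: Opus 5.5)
The plan is to work one spatial index $l$ at a time and to reduce both identities to statements about the velocity Fourier coefficients $\hat f^n_{l,k_v}$, which the refinement leaves unchanged. First I will fix precisely what zero-padding means. The extended grid is $v'_{m'} = -v_{\max} + m'\Delta v'$, with $\Delta v' = L_v/M_v$ and $m'=0,\dots,M_v-1$. The refined values are
\[
f^{ext,n}_{l,m'} = \sum_{k_v=-M_v/2}^{M_v/2-1} \hat g_{l,k_v}\, e^{i\beta_{k_v} v'_{m'}},
\]
where $\hat g_{l,k_v} = \hat f^n_{l,k_v}$ for $-N_v/2\le k_v\le N_v/2-1$ and $\hat g_{l,k_v}=0$ otherwise. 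The old and new grids share the left endpoint $-v_{\max}$ and the same period $L_v$, so the wavenumbers $\beta_{k_v}=2\pi k_v/L_v$ are the same on both grids. Consequently $f^{ext,n}_{l,\cdot}$ is the samples on the fine grid of the trigonometric interpolant of $f^n_{l,\cdot}$. Because the forward DFT on the fine grid inverts this synthesis, the $M_v$-point forward DFT of $f^{ext,n}_{l,\cdot}$ returns exactly $\hat g_{l,\cdot}$.

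\textbf{Mass.} The discrete orthogonality relation $\sum_{m} e^{i\beta_k v_m} = N_v\,\delta_{k,0}$ (for $|k|<N_v$) gives
\[
\Delta v\sum_m f^n_{l,m} = \Delta v\, N_v\, \hat f^n_{l,0} = L_v\, \hat f^n_{l,0}.
\]
The same relation on the fine grid gives
\[
\Delta v'\sum_{m'} f^{ext,n}_{l,m'} = \Delta v'\, M_v\, \hat g_{l,0} = L_v\,\hat f^n_{l,0}.
\]
Multiplying by $\Delta x$, which is unchanged, and summing over $l$ yields $\text{Mass}(f^{ext,n})=\text{Mass}(f^n)$ in the sense of \eqref{discrete total mass}.

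\textbf{$L^2$-norm.} The discrete Parseval identity gives
\[
\sum_m |f^n_{l,m}|^2 = N_v \sum_{k_v}|\hat f^n_{l,k_v}|^2 ,
\]
and on the fine grid
\[
\sum_{m'}|f^{ext,n}_{l,m'}|^2 = M_v\sum_{k_v}|\hat g_{l,k_v}|^2 = M_v\sum_{k_v}|\hat f^n_{l,k_v}|^2 ,
\]
since the padded modes contribute nothing. Multiplying by $\Delta v$ and $\Delta v'$ respectively turns both prefactors into $L_v$. Summing over $l$ with the weight $\Delta x$ then gives equality of the norms \eqref{discrete Lp} with $p=2$. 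For refinement in $x$ the argument is identical, with $l$ and $m$ exchanged and $L_x=N_x\Delta x$ in place of $L_v$.

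\textbf{Main obstacle.} The delicate point is the Nyquist mode $k_v=-N_v/2$. On the coarse grid it coincides with $+N_v/2$, but on the fine grid the two are distinct modes. If the coefficient is kept verbatim at $-N_v/2$, as in the definition above, both identities hold exactly. However, the padded field may then acquire a small imaginary part. If one instead splits the coefficient symmetrically, placing $\tfrac12\hat f^n_{l,-N_v/2}$ at both $\pm N_v/2$ to keep $f^{ext}$ real, then mass is still preserved, since only $k_v=0$ enters it. The $L^2$-norm, however, changes by $-\tfrac{L_v}{2}|\hat f^n_{l,-N_v/2}|^2$. I would therefore state the convention explicitly. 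Under the symmetric convention, the $L^2$ claim holds up to this Nyquist term, which is negligible precisely when the monitors of Methods 1 and 2 have not yet triggered.
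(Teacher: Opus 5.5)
Your proof is correct and is essentially the paper's argument: for each fixed $l$, the mass reduces to $L_v\hat f^n_{l,0}$ and the squared $L^2$-norm (via Parseval) reduces to $L_v\sum_{k_v}|\hat f^n_{l,k_v}|^2$, and zero-padding changes neither because $M_v\Delta v' = N_v\Delta v = L_v$. Your Nyquist caveat is a valid sharpening that the paper omits. The paper's padding rule keeps $\hat f^n_{l,-N_v/2}$ verbatim at $-N_v/2$, which is exactly your convention, so its exact $L^2$ identity holds for a generally complex-valued $f^{ext,n}$; a real-valued implementation (symmetric split, or equivalently taking the real part after the IFFT) still conserves mass but gives $\|f^{ext,n}\|_2^2=\|f^n\|_2^2-\frac{L_v}{2}\Delta x\sum_l|\hat f^n_{l,-N_v/2}|^2$.
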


\begin{proof}
	Without loss of generality, we consider the zero-padding operation applied strictly along the velocity variable $v$. For any fixed spatial grid point $x_l$, let $\hat{f}^n_{l, k_v}$ denote the 1D discrete Fourier transform of $f^n_{l,m}$ with respect to $v$. The total discrete mass over the entire phase space is given by the double summation:
	$$ \text{Mass}(f^n) = \sum_{l=0}^{N_x-1} \left( \sum_{m=0}^{N_v-1} f^n_{l,m} \Delta v \right) \Delta x =  \sum_{l=0}^{N_x-1} \left(L_v \hat{f}^n_{l, 0}\right) \Delta x, $$
	where $L_v=N_v \Delta v$ is the length of the velocity domain. The adaptive zero-padding operation maps the original 1D Fourier coefficients $\hat{f}^n_{l, k_v}$ to a new extended array $\hat{f}^{ext,n}_{l, k_v}$ of size $M_v$ such that:
	$$ \hat{f}^{ext,n}_{l, k_v} = \begin{cases} \hat{f}^n_{l, k_v}, & \text{for } -N_v/2 \leq k_v \leq N_v/2-1, \\ 0, & \text{otherwise.} \end{cases} $$
	Since the zero-th frequency mode remains strictly identical ($\hat{f}^{ext,n}_{l, 0} = \hat{f}^n_{l, 0}$ for all $l$) and the physical domain size $L_v$ is unchanged, the global total mass is exactly conserved: 
	$$\text{Mass}(f^{ext,n}) = \sum_{l=0}^{N_x-1} \left(L_v \hat{f}^{ext,n}_{l, 0}\right) \Delta x = \sum_{l=0}^{N_x-1} \left(L_v \hat{f}^{n}_{l, 0}\right) \Delta x = \text{Mass}(f^n).$$
	
	Furthermore, by applying Parseval's theorem to the 1D transform along the $v$-direction, the global discrete $L^2$-norm squared is evaluated as:
	$$ ||f^n||_{2}^2 = \sum_{l=0}^{N_x-1} \left( \sum_{m=0}^{N_v-1} |f^n_{l,m}|^2 \Delta v \right) \Delta x =  \sum_{l=0}^{N_x-1} \left(L_v\sum_{k_v=-N_v/2}^{N_v/2-1} |\hat{f}^n_{l, k_v}|^2\right) \Delta x. $$
	Again, since the zero-padding operation appends only zeros to the high-frequency spectrum, the inner sum over $k_v$ is trivially identical for every spatial index $l$:
	$$ ||f^{ext,n}||_{2}^2 =  \sum_{l=0}^{N_x-1} \left(L_v \sum_{k_v=-M_v/2}^{M_v/2-1} |\hat{f}^{ext,n}_{l, k_v}|^2\right) \Delta x = \sum_{l=0}^{N_x-1} \left(L_v\sum_{k_v=-N_v/2}^{N_v/2-1} |\hat{f}^n_{l, k_v}|^2\right) \Delta x = ||f^n||_{2}^2. $$
	This completes the proof.
\end{proof}

Next, we state that the Fourier spectral method preserves the discrete total mass and $L^2$-norm.

\begin{proposition}\label{Prop 2}
	For any arbitrary discrete distribution function $f$, the semi-Lagrangian advection steps computed via the FFT-based exact phase shifts exactly conserve both the discrete total mass and the discrete $L^2$-norm up to machine precision.
\end{proposition}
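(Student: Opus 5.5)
Both advection sub-steps act as diagonal unimodular multipliers on 1D discrete Fourier coefficients, so I will prove the claim one sub-step at a time and then compose.

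\textbf{Transport step.} Take $\mathcal{T}^{\Delta t}$ and fix a velocity index $m$. The update is $\hat f_{k_x,m}\mapsto \hat f_{k_x,m}e^{-i\alpha_{k_x}v_m\Delta t}$.

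For the mass, I use the identity already exploited in the proof of Proposition \ref{Prop 1}, now in the $x$-direction:
\[
\Delta x\sum_l f_{l,m}=L_x\hat f_{0,m}.
\]
The $k_x=0$ multiplier is $e^{0}=1$, so $\hat f_{0,m}$ is unchanged for every $m$. Multiplying by $\Delta v$ and summing over $m$ then gives exact conservation of $\mathrm{Mass}(f)$.

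For the $L^2$-norm, I apply the discrete Parseval identity in $x$:
\[
\Delta x\sum_l|f_{l,m}|^2=L_x\sum_{k_x}|\hat f_{k_x,m}|^2.
\]
Since $|e^{-i\alpha_{k_x}v_m\Delta t}|=1$, each modulus $|\hat f_{k_x,m}|$ is preserved. Summing over $m$ with weight $\Delta v$ gives $\|f\|_2$ conserved.

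\textbf{Drift step.} The drift step $\mathcal{D}^{\Delta t}$ is handled identically, with the roles of $x$ and $v$ exchanged. Fix $l$; the multiplier is $e^{-i\beta_{k_v}E_l\Delta t}$, which is unimodular and equal to $1$ at $k_v=0$. I use $\Delta v\sum_m f_{l,m}=L_v\hat f_{l,0}$ together with Parseval in $v$.

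I must check that $E_l$ is real and frozen during the sub-step. It is computed from $\rho$ before the shift and depends only on $l$, so for each fixed $l$ the multiplier is a fixed unimodular diagonal operator. Realness of $E_l$ follows because the Poisson solve $\widehat E_{k_x}=\widehat\eta_{k_x}/(i\alpha_{k_x})$ preserves Hermitian symmetry, given that $\eta$ is real.

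\textbf{Composition.} The Strang, Yoshida and 6th-order schemes \eqref{meren} are compositions of these sub-steps, including the negative-weight ones. Each sub-step conserves both quantities exactly, so every composite scheme does too.

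\textbf{Main obstacle: the Nyquist mode.} The step needing care is the unpaired mode $k=-N/2$ when $N$ is even. The multiplier $e^{-i\alpha_{-N/2}v_m\Delta t}$ has no conjugate partner, so applying it breaks Hermitian symmetry and the inverse DFT is generally complex-valued. The $L^2$ and mass identities still hold verbatim for the complex array. However, if the implementation takes the real part afterwards, or zeroes or symmetrizes the Nyquist mode, I would argue as follows:
\begin{itemize}
\item Mass is unaffected, since only the $k=0$ coefficient enters it.
\item The $L^2$-norm is preserved exactly if the Nyquist coefficient is set to zero beforehand.
\item If instead the Nyquist coefficient is retained, the $L^2$-norm is preserved up to the size of that coefficient, which is at round-off level for resolved data; this is the sense of ``up to machine precision''.
\end{itemize}
The remaining deviation from exactness is floating-point roundoff in the FFT, which is unitary up to $O(\varepsilon)$.
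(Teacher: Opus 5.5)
Your proposal is correct and takes the paper's route: the $k=0$ multiplier equals $1$, so mass is unchanged; Parseval with unimodular multipliers gives the $L^2$-norm; and the drift step follows by symmetry. Your treatment of composition and of the Nyquist mode goes beyond what the paper checks. The one slip is that your Hermitian-symmetry argument for real $E_l$ fails at the very Nyquist mode you flag. Since $\widehat{\eta}_{-N_x/2}$ is real, $\widehat{E}_{-N_x/2}=\widehat{\eta}_{-N_x/2}/(i\alpha_{-N_x/2})$ is purely imaginary whenever it is nonzero, and a complex $E_l$ would make the drift multiplier non-unimodular, breaking $L^2$ conservation. The fix is to take $E_l$ as the real part, or to zero that mode in the Poisson solve; any real $E_l$ is all your argument needs.
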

\begin{proof}
	We demonstrate the strict conservation by detailing the transport step; the proof for the drift step follows by exact symmetry. Let $f^n_{l,m}$ denote the distribution function at the $n$-th time step. Recall that the transport equation $\partial_t f + v \partial_x f = 0$ is solved in the $x$-Fourier space with the exact phase shift:
	$$\hat{f}^*_{k_x, m} = \hat{f}^n_{k_x, m} e^{-i \alpha_{k_x} v_m \Delta t},$$
	where $\alpha_{k_x} = 2\pi\frac{k_x}{L_x}$. Noting that the discrete total mass is determined solely by the zero-th mode ($k_x = 0$), it is straightforward to have
	\begin{align*}
		\text{Mass}(f^*) &= \sum_{m=0}^{N_v-1} \left( L_x \hat{f}^*_{0,m} \right) \Delta v \\
		&= \sum_{m=0}^{N_v-1} \left( L_x \hat{f}^n_{0,m} \right) \Delta v \\
		&= \sum_{m=0}^{N_v-1} \left( \sum_{l=0}^{N_x-1} f_{l,m}^n\Delta x \right) \Delta v = \text{Mass}(f^n),
	\end{align*}
	where $f^*$ is discrete solutions to the transport equation obtained by IDFT of $\hat{f}^*_{k_x, m}$.
	Applying Parseval's identity along the $x$-direction, we have
	\begin{align*}
		||f^*||_2^2 &= \sum_{m=0}^{N_v-1} \left(L_x \sum_{k_x=-N_x/2}^{N_x/2-1} |\hat{f}^*_{k_x, m}|^2\right) \Delta v\\ 
		&= \sum_{m=0}^{N_v-1} \left(L_x \sum_{k_x=-N_x/2}^{N_x/2-1} |\hat{f}^n_{k_x, m}|^2\right) \Delta v\\ 
		&= \sum_{m=0}^{N_v-1} \left(\sum_{l=0}^{N_x-1} |f_{l, m}^n|^2 \Delta x \right) \Delta v\\
		&=||f^n||_2^2.
	\end{align*} 
This confirms the exact algebraic conservation, completing the proof.
\end{proof}

\begin{remark}
	Proposition \ref{Prop 1} establishes the exact conservation of the discrete invariants during the adaptive grid expansion (zero-padding), while Proposition \ref{Prop 2} guarantees their strict preservation during the semi-Lagrangian advection steps. By combining these two results, it is proven that the proposed fully adaptive spectral method strictly preserves the discrete total mass and $L^2$-norm throughout the entire time evolution prior to reaching the resolution limit.	
\end{remark}
\begin{remark}
	In the following sections, we present several numerical tests where the simulations with adaptive techniques reach the hardware memory limit at a certain time $t_m>0$. Beyond this critical point, the depletion of available computational resources restricts any further grid expansion. Consequently, the severe phase-space filamentation pushes energy into unresolved high-frequency modes that exceed the maximum spectral capacity. This inevitable loss of high-frequency information is reflected as a gradual decay in the $L^2$-norm ($||f||_2 < ||f_0||_2$) beyond $t_m$. In contrast, as mathematically guaranteed by the numerical formulation, the discrete total mass remains strictly invariant up to machine precision throughout the entire simulation.
\end{remark}

	\section{Numerical tests}

	In this section, we present several numerical tests to demonstrate the performance and accuracy of our proposed method. The numerical solutions are computed by combining high-order splitting schemes in time with Fourier spectral discretization in phase space. All simulations were performed using a single NVIDIA RTX 3060 GPU with MATLAB. For brevity, throughout the subsequent examples, we introduce the following notation:
	$$f_M(v; v_{th}) = \frac{1}{\sqrt{2\pi v_{th}^2}} e^{-\frac{v^2}{2v_{th}^2}},$$
	where $v, v_{th} \in \mathbb{R}$ and $v_{th}$ denotes the thermal velocity.
	
	To evaluate the conservation properties of the numerical scheme, we define the corresponding discrete macroscopic invariants on the computational grid. Following the notation in Section 3, we let $x_l$ and $v_m$ denote the spatial and velocity grid points with uniform grid cell spacings $\Delta x$ and $\Delta v$, respectively. The numerical solution at $(x_l,v_m)$ at time $t^n$ is denoted by $f_{l,m}^n$, and the associated discrete electric field is denoted by $\mathbb{E}_l^n$. Recalling the definition of total mass \eqref{discrete total mass} and discrete $L^p$ norm ($p=1,2$) in \eqref{discrete Lp}, we define the discrete total energy by
	\begin{equation*}
		\text{Energy}^n = \Delta x \Delta v \sum_{l,m} \frac{|v_m|^2}{2} f_{l,m}^n 
		+ \Delta x \sum_{l} \frac{|\mathbb{E}_l^n|^2}{2}.
	\end{equation*}
	The discrete entropy is given by
	$$
	\text{Entropy}^n = \Delta x \Delta v \sum_{l,m,\, f_{l,m}^n > 0} f_{l,m}^n \log \frac{1}{f_{l,m}^n},
	$$
	where the summation is strictly restricted to indices $(l,m)$ satisfying $f_{l,m}^n > 0$ to avoid numerical singularities.
	
	Throughout this section, we refer to the two methods introduced in Section 3 as AMR1 and AMR2. The Fourier spectral method with fixed grids is referred to as Non-AMR, for which the number of grid points, $N_x$ and $N_v$, are set to match the maximum resolution attained by the AMR methods before reaching the memory limit in each test case.

		\subsection{Weak Landau Damping}
		We start from the weak Landau damping test which provides a standard benchmark where the theoretical decay behavior is well understood (e.g. see \cite{AV,CBRY,FSB,QR,CGQ,ZHCQ}).  The initial distribution function is given by the following perturbed equilibrium:
		\begin{equation}\label{init weak}
			f_0(x, v) =f_M(v;v_{th}) (1 + \alpha \cos(kx)),
		\end{equation}
		with $\alpha = 0.01$ and $k = 0.5$, which yields a moderate damping rate that is suitable for numerical verification. 
		The spatial domain has length $L = 4\pi$, ensuring periodicity consistent with the chosen wavenumber $k$, while the velocity domain is truncated at $v_{max} = 9$ which is sufficiently large to capture the support of the distribution function. The thermal velocity is set to $v_{th} = 1$.
		
        \subsubsection{Accuracy Test}
		To rigorously verify the temporal accuracy of our implemented splitting schemes (2nd, 4th, and 6th order), we conducted an accuracy test focusing on the relative $L^1$ error of the distribution function $f$. To isolate the temporal error from the spatial and velocity discretization errors, we fixed the grid resolution at a sufficiently high level, i.e., $N_x = 32$ and $N_v = 2816$, ensuring that the spatial error is negligible compared to the temporal error. The reference solution, $f_{\text{ref}}$, was computed using the 6th-order splitting method with a highly refined time step $\Delta t = 1/2^6$. The simulation was run up to a final time $T = 50$. We then computed the numerical solutions for the 2nd-, 4th-, and 6th-order methods using time steps $\Delta t \in \{2, 1, 0.5, 0.25, 0.125\}$. The relative $L^1$ error was evaluated at the final time $T=50$ as:
		$$\text{Relative $L^1$ error} = \frac{||f_{\Delta t} - f_{\text{ref}}||_{L^1}}{||f_{\text{ref}}||_{L^1}},$$
		where $f_{\Delta t}$ denotes the numerical solution at the final time $T=50$ obtained with a fixed time step $\Delta t$.	
		\begin{figure}[htbp]
			\centering
			\includegraphics[width=0.5\linewidth]{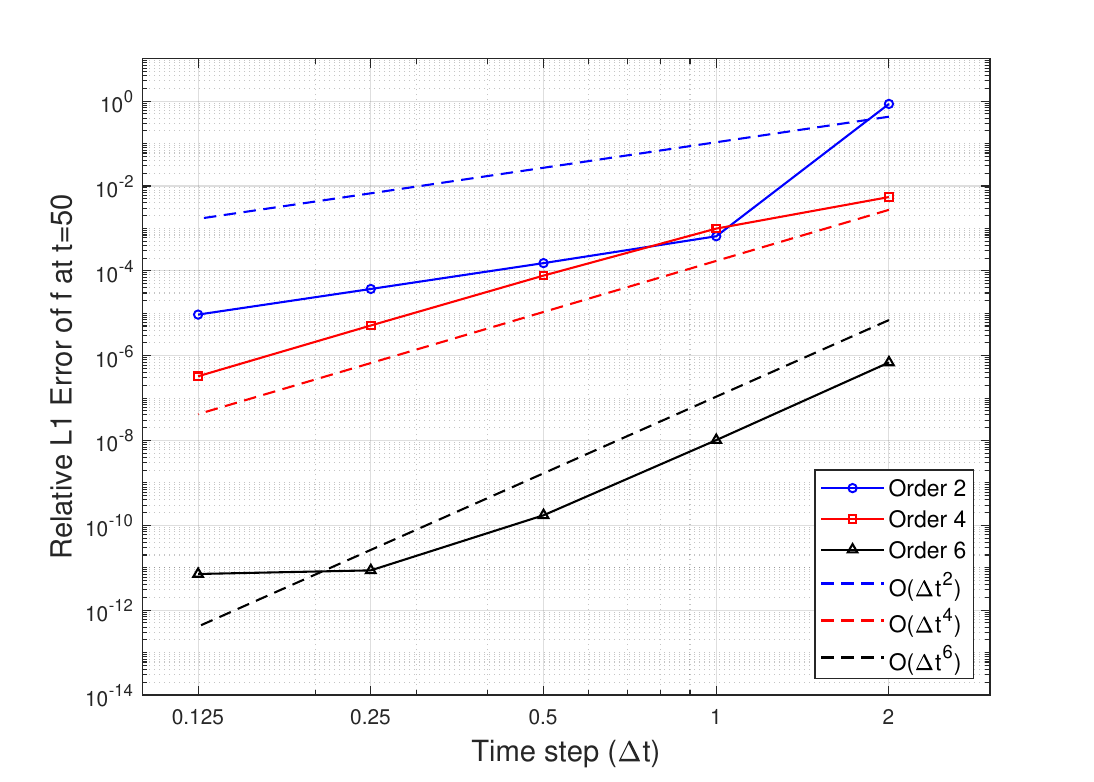}
			\caption{Weak Landau damping.}\label{Weak acc}
		\end{figure}
		In Figure \ref{Weak acc}, we display the resulting relative errors as a function of the time step size. As expected, the 2nd and 4th-order methods exhibit slopes closely aligning with the theoretical curves of $\mathcal{O}(\Delta t^2)$ and $\mathcal{O}(\Delta t^4)$, respectively. We note that, for the 6th-order splitting method, the relative error drops along the curve of $\mathcal{O}(\Delta t^6)$ and reaches a plateau. This flattening occurs because the temporal error has fallen below the machine precision threshold, demonstrating the exceptionally high accuracy and rapid convergence of the 6th-order scheme.
		
		\subsubsection{Reversibility}	
		Given that the Vlasov-Poisson system is inherently time-reversible, we aim to evaluate whether our proposed spectral method can preserve this reversible behavior at the discrete level. To this end, we define and compare two numerical solutions: the forward solution ($f_{\text{fwd}}$) and the backward solution ($f_{\text{bwd}}$). 
		
		The forward solution, $f_{\text{fwd}}(t)$, is obtained by directly integrating the system forward in time from the initial state at $t = 0$ up to $t = 80$ with a time step of $\Delta t = 0.25$. To compute the backward solution, we take the final state of the forward computation, $f_{\text{fwd}}(80)$, as a new initial condition. We then integrate the system backward in time from $t = 80$ down to $t = 0$ using a negative time step, $\Delta t = -0.25$, yielding $f_{\text{bwd}}(t)$. For example, when $t=50$, $f_{\text{fwd}}(50)$ represents the forward solution obtained by integrating the system from $t=0$ to $t=50$. Meanwhile, $f_{\text{bwd}}(50)$ represents the backward solution obtained by using $f_{\text{fwd}}(80)$ as the initial condition and integrating backward in time from $t=80$ to $t=50$ with a time step of $\Delta t = -0.25$.
		
		Using these two solutions, at each intermediate time step, we first compare the $L^2$ norms of the corresponding electric fields. We then quantitatively measure the discrepancy between $f_{\text{fwd}}$ and $f_{\text{bwd}}$ in terms of the $L^1$ and $L^2$ norms, total energy, and entropy. These numerical solutions are computed using a grid resolution of $N_x=32$ and $N_v=2816$.

			\begin{figure}[htbp]
					\centering
					\begin{subfigure}[b]{0.4\linewidth}
							\includegraphics[width=1\linewidth]{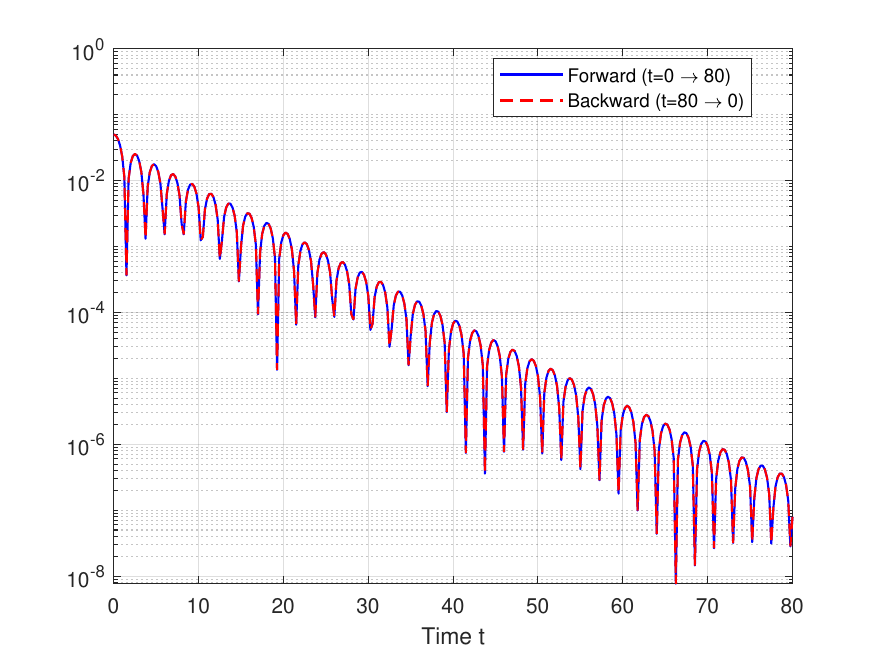}
							\subcaption{$L^2$-norm of $E$\\ \hspace{3mm}} 
						\end{subfigure}	
					\begin{subfigure}[b]{0.4\linewidth}
							\includegraphics[width=1\linewidth]{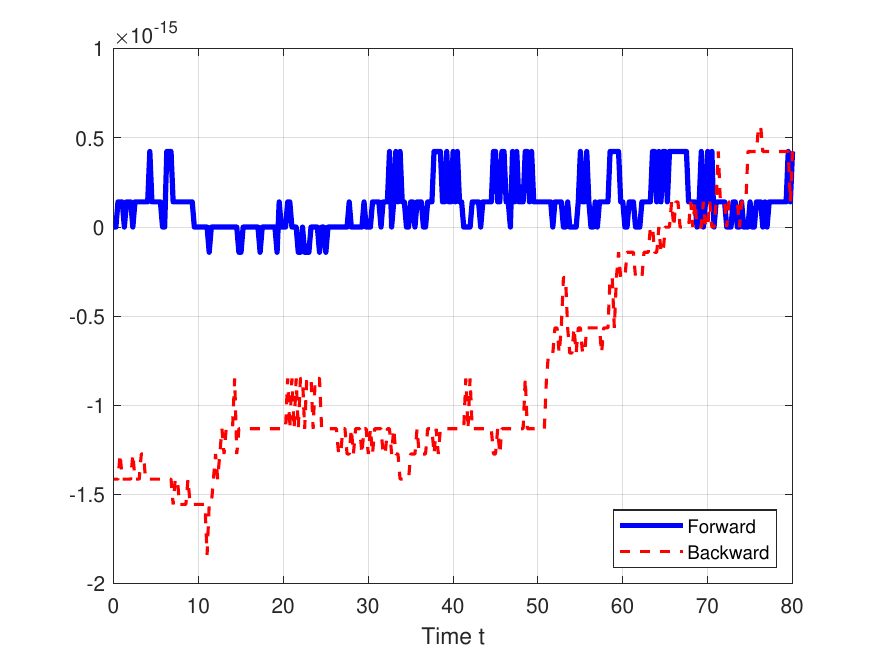}
							\subcaption{$\displaystyle \frac{\int f dvdx - \int f_0 dvdx}{\int f_0 dvdx}$}
						\end{subfigure}					
					\begin{subfigure}[b]{0.4\linewidth}
							\includegraphics[width=1\linewidth]{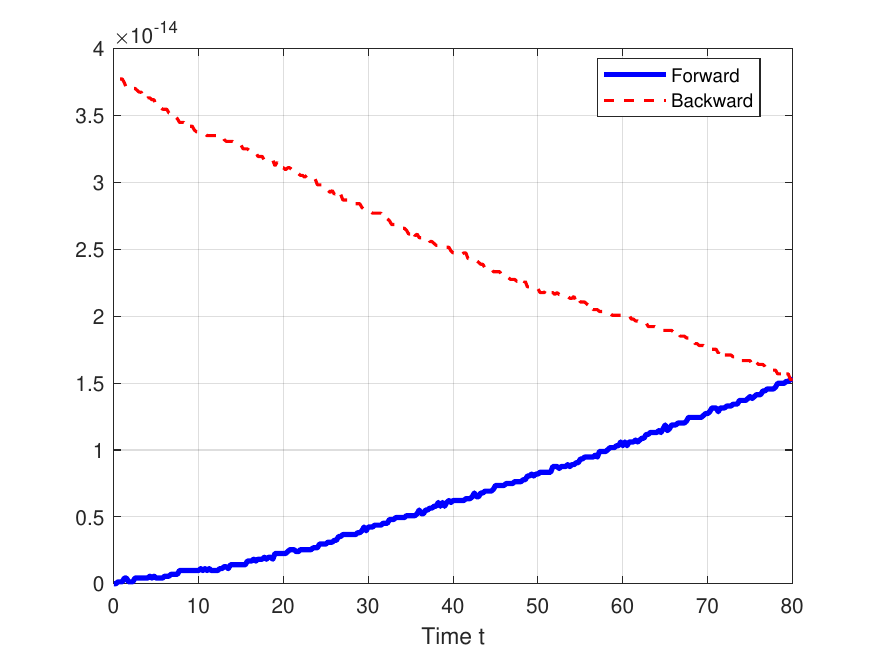}
							\subcaption{ $\displaystyle\frac{\|f^n\|_1-\|f^0\|_1}{\|f^0\|_1}$}
						\end{subfigure}	
					\begin{subfigure}[b]{0.4\linewidth}
							\includegraphics[width=1\linewidth]{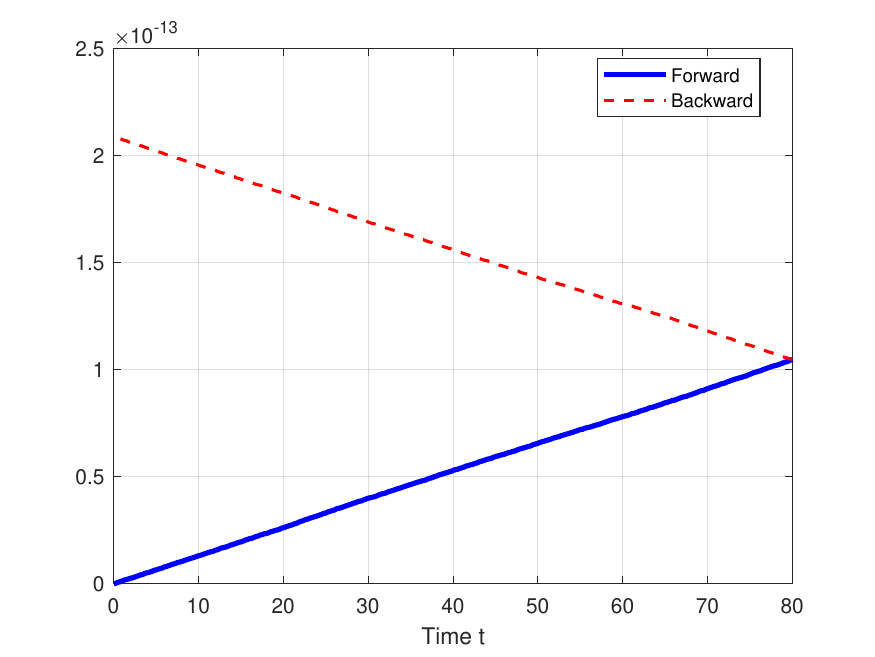}
							\subcaption{$\displaystyle\frac{\|f^n\|_2-\|f^0\|_2}{\|f^0\|_2}$}
						\end{subfigure}	
					\begin{subfigure}[b]{0.4\linewidth}
							\includegraphics[width=1\linewidth]{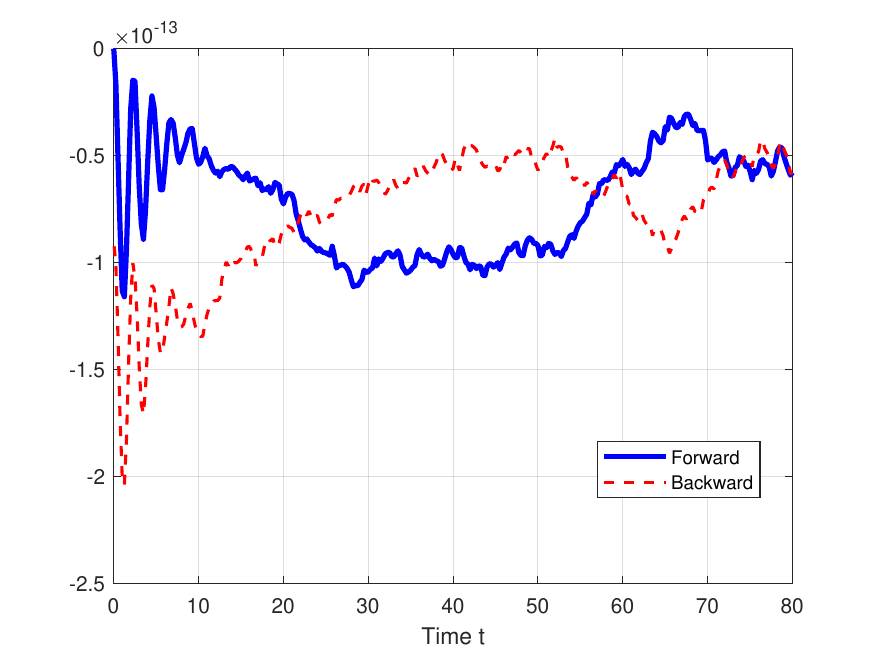}
							\subcaption{$\displaystyle \frac{Energy(t)-Energy(0)}{Energy(0)}$}
						\end{subfigure}	
					\begin{subfigure}[b]{0.4\linewidth}
							\includegraphics[width=1\linewidth]{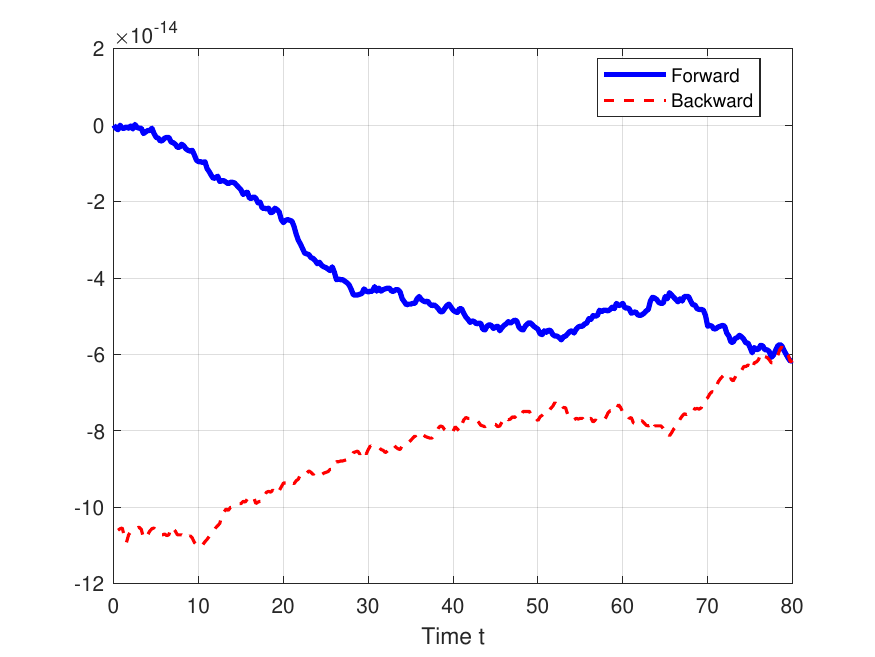}
							\subcaption{$\displaystyle \frac{Entropy(t)-Entropy(0)}{Entropy(0)}$}
						\end{subfigure}	
					\caption{Weak Landau damping. Numerical results for reversibility.}\label{Fig weak 2}
				\end{figure}	
                
			\begin{figure}[htbp]
					\centering
					\begin{subfigure}[b]{0.4\linewidth}
							\includegraphics[width=1\linewidth]{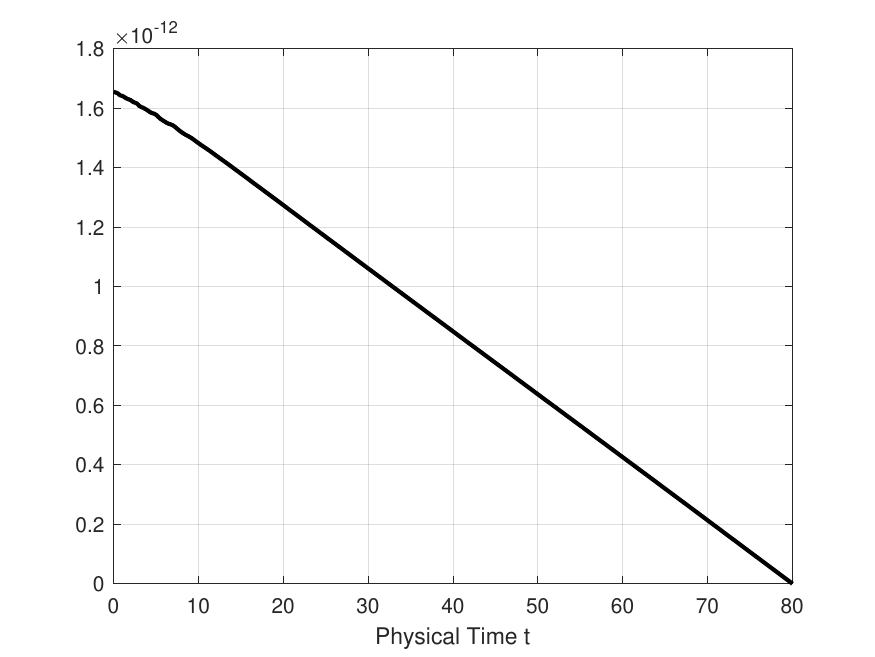}
							\subcaption{$\frac{||f_{\text{fwd}}(t) - f_{\text{bwd}}(t)||_1}{||f_{\text{fwd}}(0)||_1}$}
						\end{subfigure}
					\begin{subfigure}[b]{0.4\linewidth}
							\includegraphics[width=1\linewidth]{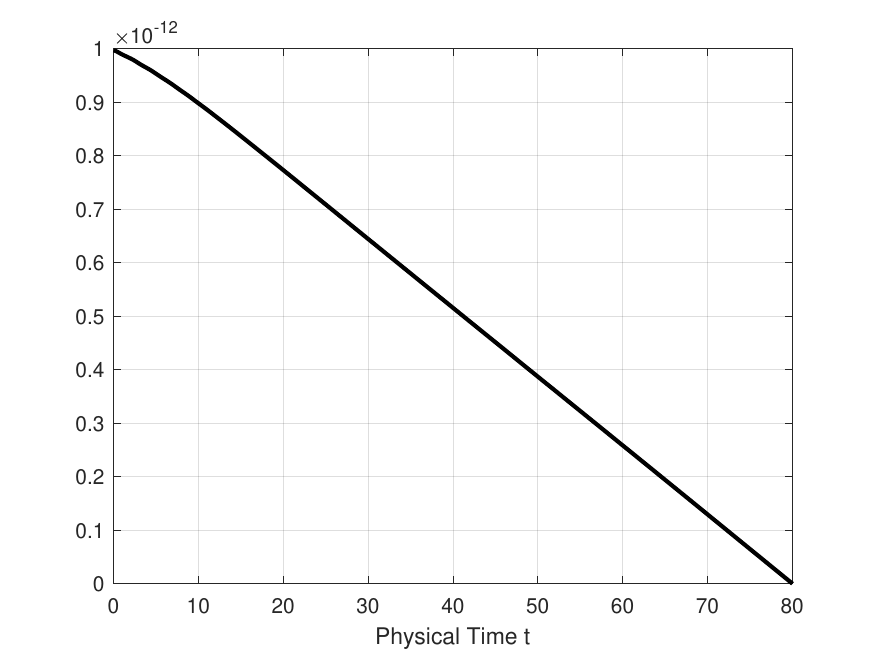}
											\subcaption{$\frac{||f_{\text{fwd}}(t) - f_{\text{bwd}}(t)||_2}{||f_{\text{fwd}}(0)||_2}$}
						\end{subfigure}		
					\caption{Weak Landau damping. Numerical results for reversibility.}\label{Fig weak 3}
				\end{figure}

		As shown in Figure \ref{Fig weak 2}, the $L^2$ norms of the electric fields associated with the forward and backward solutions show very good agreement with each other. Moreover, we observe that macroscopic conservative quantities—such as the $L^1$- and $L^2$-norms of the distribution function, total entropy, and total energy—are recovered with exceptional accuracy during the backward integration. This strict reversibility is strongly supported by the remarkably small relative discrepancies between the forward and backward distributions ($f_{\text{fwd}}$ and $f_{\text{bwd}}$), as illustrated in Figure \ref{Fig weak 3}. 
		
			\begin{figure}[ht]
		\centering
		\includegraphics[width=0.4\linewidth]{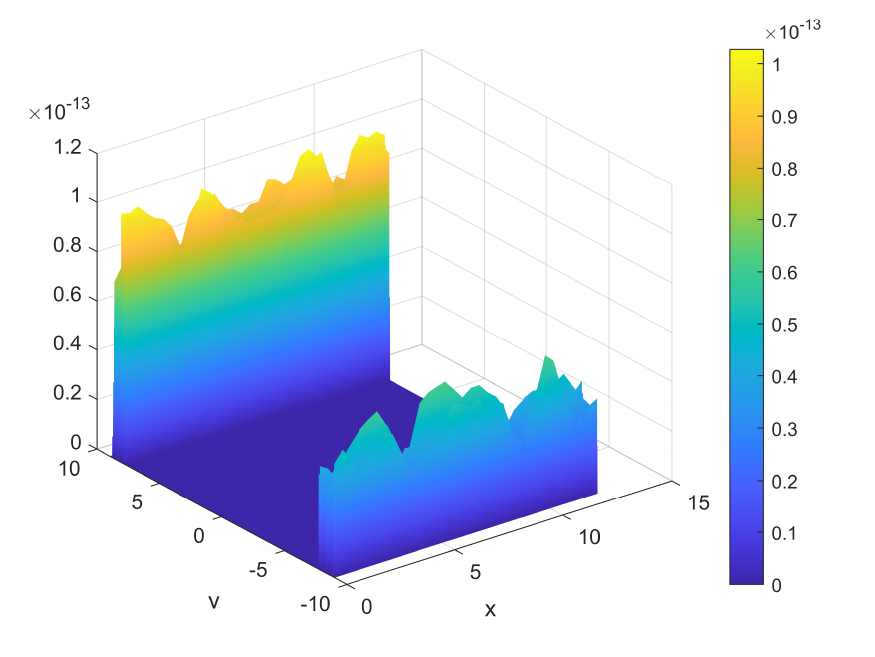}
		\caption{Weak Landau damping. The negative part of $f$, i.e. $\displaystyle \frac{|f|-f}{2}$.}\label{weak neg}
	\end{figure}	
		We note that the minor loss of positivity observed in the simulation is primarily attributable to numerical round-off errors near the truncated velocity boundaries, as detailed in Figure \ref{weak neg}.

					\subsubsection{Application of adaptive refinement}
						Next, we demonstrate the performance of adaptive Fourier spectral methods combined with the proposed AMR approaches.
					For the AMR methods, we initially begin with $N_x \times N_v = 32 \times 128$, while for the non-AMR methods, we use $32 \times 2816$, which allows us to capture fine-scale structures generated by phase mixing during the simulation time $t\in [0,80]$. 
					
					\begin{figure}[h]
						\centering
						\begin{subfigure}[b]{0.4\linewidth}
							\includegraphics[width=1\linewidth]{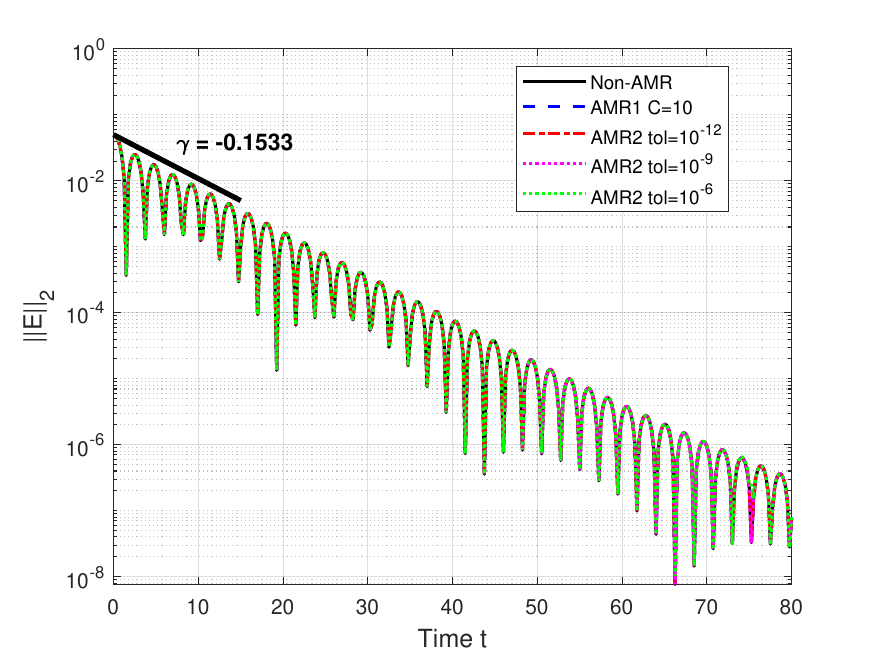}
							\subcaption{$L^2$-norm of $E$\\ \hspace{3mm}}\label{weak a}
						\end{subfigure}	
							\begin{subfigure}[b]{0.4\linewidth}
						\includegraphics[width=1\linewidth]{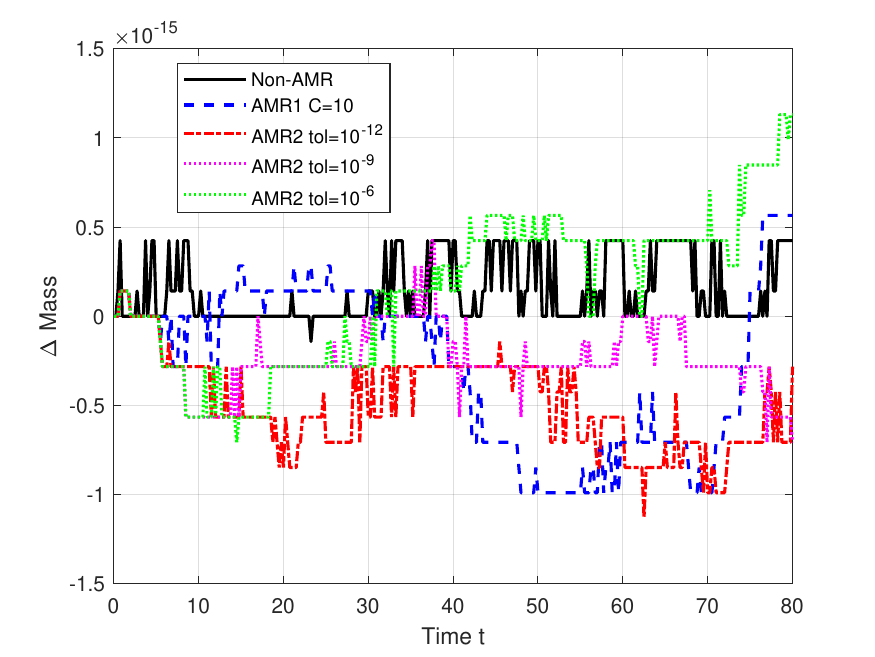}
						\subcaption{$\displaystyle \frac{\int f dvdx - \int f_0 dvdx}{\int f_0 dvdx}$}\label{weak b}
					\end{subfigure}	
						\begin{subfigure}[b]{0.4\linewidth}
						\includegraphics[width=1\linewidth]{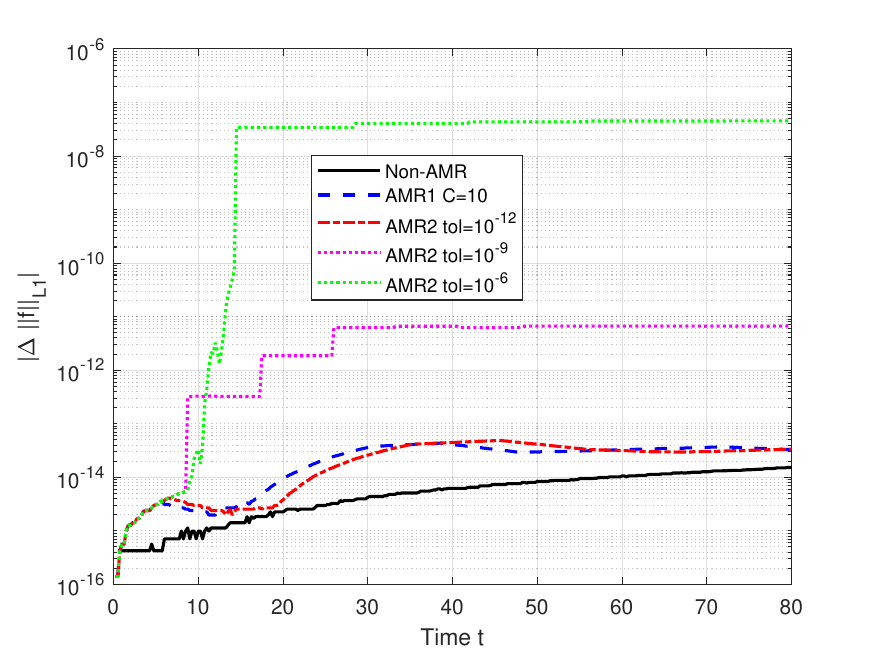}
						\subcaption{ $\displaystyle\frac{\|f^n\|_1-\|f^0\|_1}{\|f^0\|_1}$}\label{weak c}
					\end{subfigure}	
						\begin{subfigure}[b]{0.4\linewidth}
							\includegraphics[width=1\linewidth]{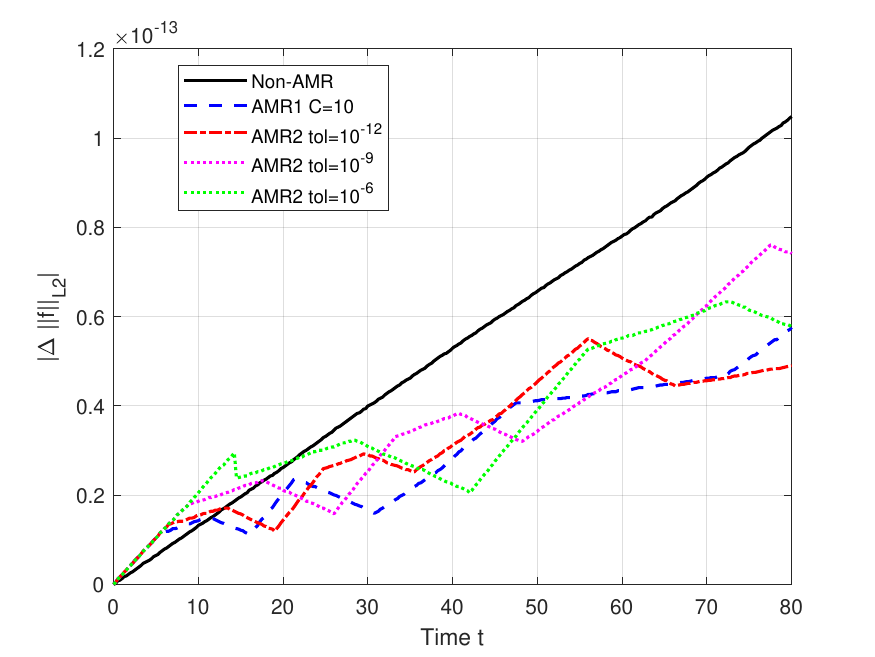}
							\subcaption{$\displaystyle\frac{\|f^n\|_2-\|f^0\|_2}{\|f^0\|_2}$}\label{weak d}
						\end{subfigure}		
									\begin{subfigure}[b]{0.4\linewidth}
						\includegraphics[width=1\linewidth]{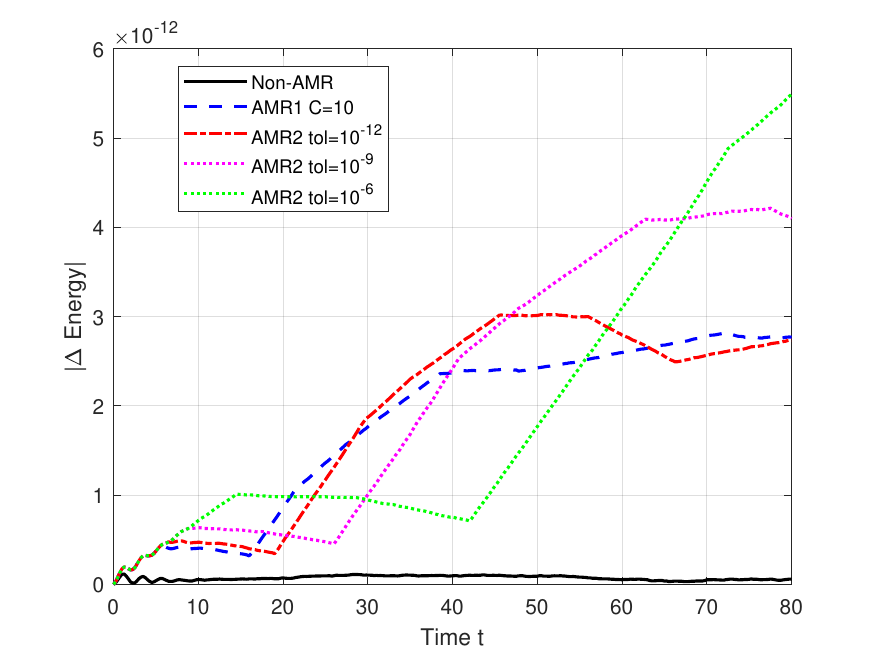}
						\subcaption{$\displaystyle \frac{Energy(t)-Energy(0)}{Energy(0)}$}\label{weak e}
					\end{subfigure}	
						\begin{subfigure}[b]{0.4\linewidth}
							\includegraphics[width=1\linewidth]{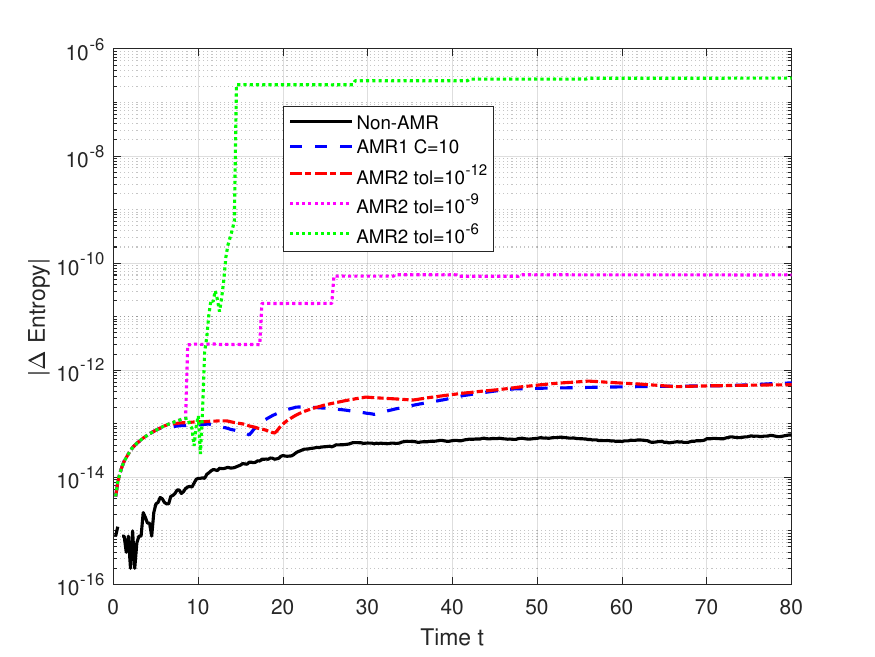}
							\subcaption{$\displaystyle \frac{Entropy(t)-Entropy(0)}{Entropy(0)}$}\label{weak f}
						\end{subfigure}					
						\caption{Weak Landau damping.}\label{Fig weak 1}
					\end{figure}					
				
						In Figure \ref{Fig weak 1}, we compare the performance of non-adaptive and adaptive techniques. 
					We first note that all Fourier spectral methods resolve the theoretical damping rate $\gamma=-0.1533$ \cite{FSB} of the $L^2$-norm of the electric field (see Figure \ref{weak a}), and the total mass is exactly preserved (Figure \ref{weak b}). The other conservative quantities are also well preserved except for the $L^1$-norm in the case of AMR2 with $\tau_{tol}=10^{-6}$ (Figures \ref{weak c}-\ref{weak f}).

                
				As depicted in Figure \ref{Fig weak 1}, the time evolution of the conservative variables reveals a subtle but important numerical feature of the proposed adaptive scheme. Whenever the Fourier domain is extended, errors increase both in the $L^1$-norm and in the entropy. These increases in errors are intrinsically due to the loss of positivity. More precisely, the zero-padding operation in the Fourier domain corresponds to global Fourier interpolation in the physical domain. Thus, near locations where the function values are extremely close to zero, such as the tail of the velocity distribution, this Fourier interpolation inevitably introduces highly localized, small negative values. Because the $L^1$-norm evaluates the absolute value and the entropy penalizes negative values, the sudden emergence of these negative undershoots directly translates into the observed jumps in errors. Nevertheless, it is important to emphasize that this localized loss of positivity at the tails is a well-known artifact of pure Fourier spectral methods. As mathematically proven in Propositions \ref{Prop 1} and \ref{Prop 2}, it does not pollute the fundamental algebraic conservation of the mass and the $L^2$-norm up to $t_m$.
					\begin{figure}[h]
					\centering
					\begin{subfigure}[b]{0.4\linewidth}
						\includegraphics[width=1\linewidth]{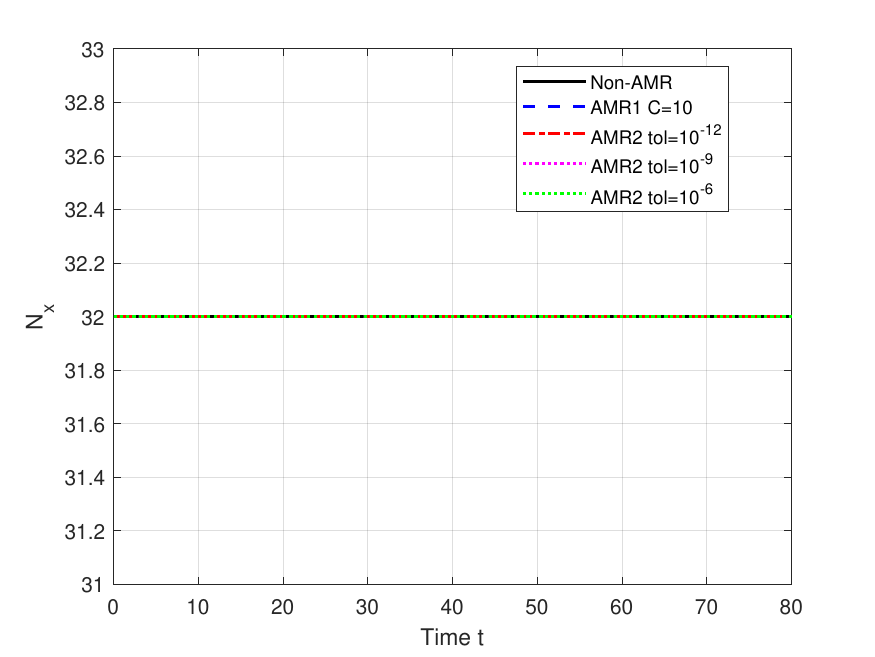}
						\subcaption{$N_x$} 
					\end{subfigure}	
					\begin{subfigure}[b]{0.4\linewidth}
						\includegraphics[width=1\linewidth]{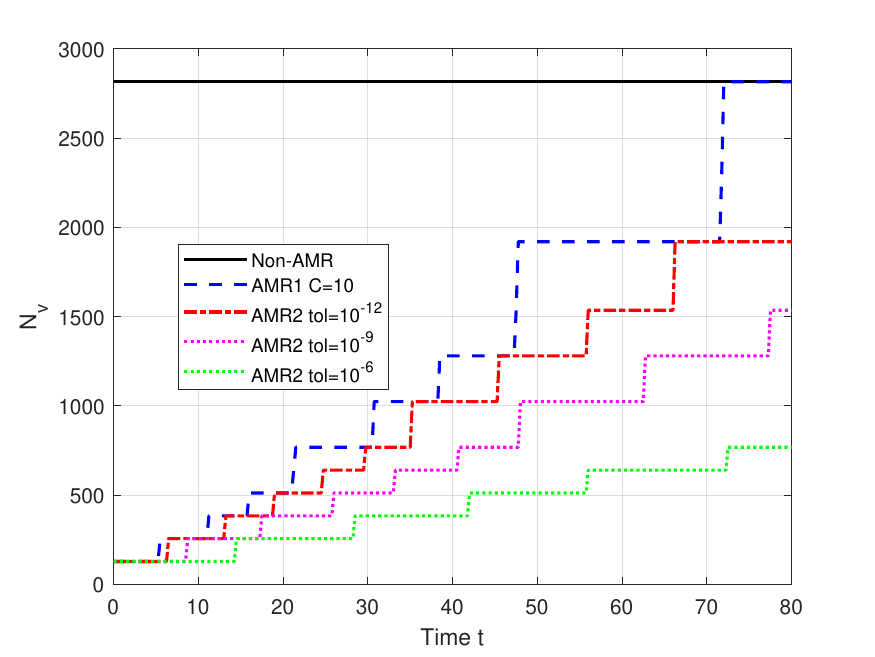}
						\subcaption{$N_v$} 
					\end{subfigure}	
					\begin{subfigure}[b]{0.4\linewidth}
						\includegraphics[width=1\linewidth]{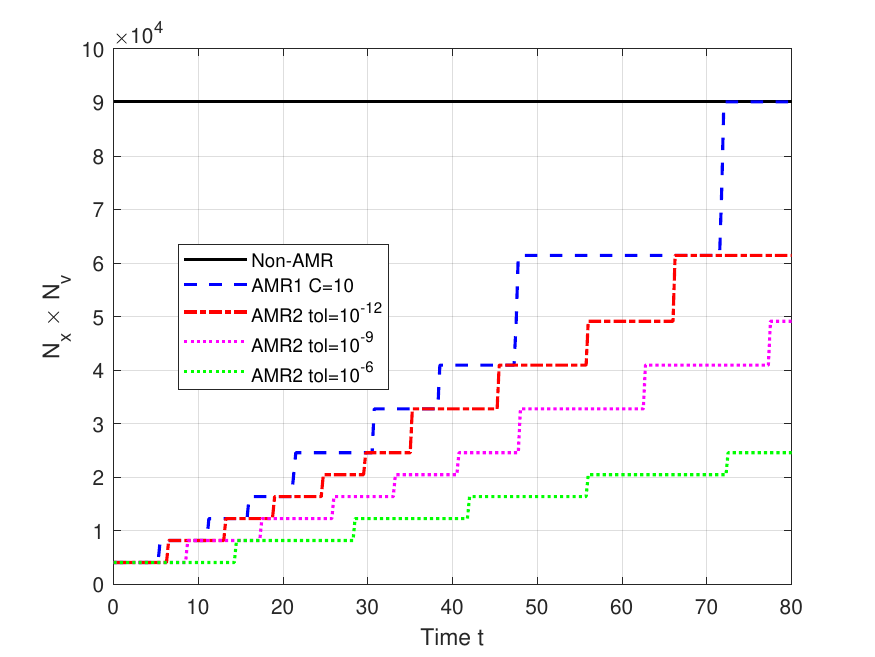}
						\subcaption{$N_x \times N_v$} 
					\end{subfigure}	
                    \begin{subfigure}[b]{0.4\linewidth}
						\includegraphics[width=1\linewidth]{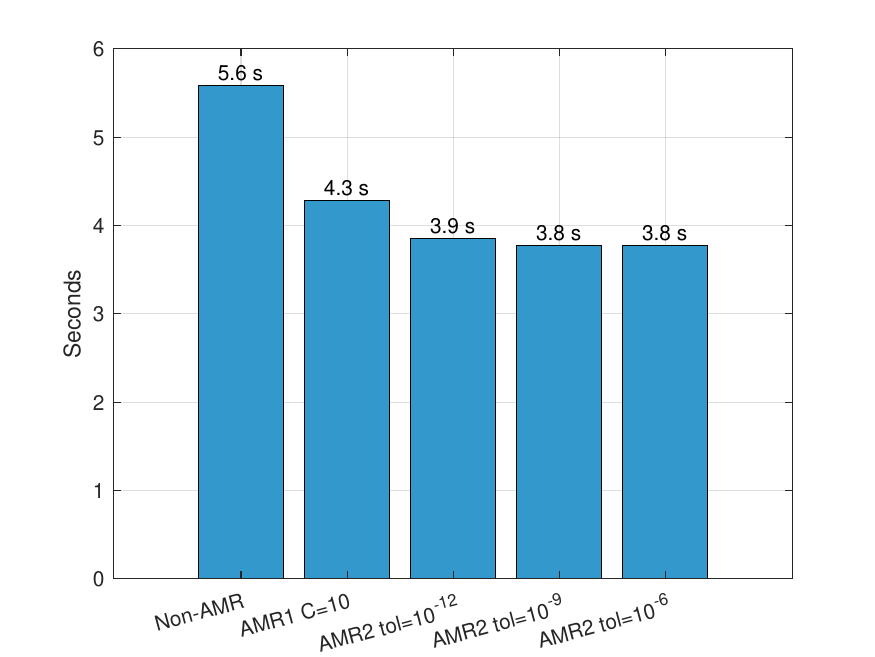}
						\subcaption{GPU time}\label{weak cpu}
					\end{subfigure}		
					\caption{Weak Landau damping. Time evolution of grid numbers and GPU time.}\label{weak N}			\end{figure}
			
				Figure \ref{weak N} shows that the AMR method yields reductions in both computational time and memory usage without losing accuracy. The comparison regarding the time evolution of the number of grid points in Figure \ref{weak N} implies that the refinement criterion for AMR1 is overly strict, leading to a premature expansion of the domain compared to AMR2. Among the AMR2 methods with different thresholds, we verify that decreasing $\tau_{tol}$ accelerates the domain extension. We also note that the number of grid points required to fully resolve the solution increases approximately linearly over time, see panels (B) and (C). The time evolution of the phase space profiles is illustrated in Figure \ref{weak phase}, which is in good agreement with \cite{SKT}.

                To explicitly visualize the mechanism of our dynamic resolution allocation, in Figure \ref{weak Fourier mode} we illustrate the temporal evolution of the spatially averaged Fourier modes along the velocity direction, i.e., $\frac{1}{N_x}\sum_l |\hat{f}(l,k_v)|$. 
                As phase mixing progresses, the macroscopic structures cascade into microscopic scales, generating a high-frequency spectral tail. In the fixed-grid approach, this tail eventually reaches the prescribed limit in Fourier space, risking numerical aliasing. In contrast, the proposed AMR methods effectively accommodate the cascading spectral energy by dynamically expanding the maximum resolvable wavenumber ($k_{\max}$) via increasing $N_v$. Thanks to the relatively mild nonlinear effects in the weak Landau damping regime, the adaptive zero-padding successfully maintains a clean exponential tail decay throughout the entire simulation without exhausting the hardware memory limits.

				\begin{figure}[h]
					\centering
					\begin{subfigure}[b]{0.32\linewidth}
						\includegraphics[width=1\linewidth]{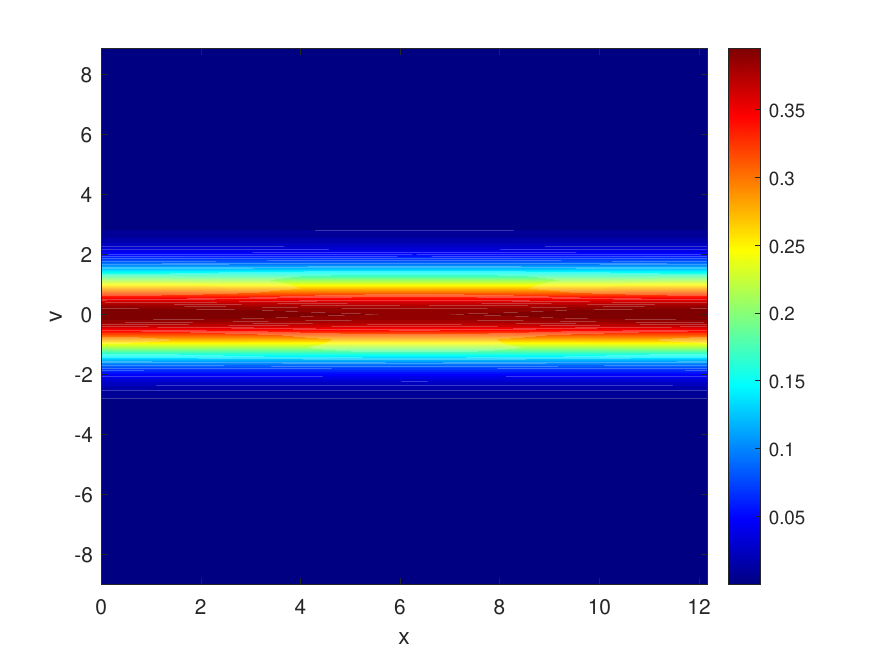}
						\subcaption{$t=0$} 
					\end{subfigure}
				    \begin{subfigure}[b]{0.32\linewidth}
				    	\includegraphics[width=1\linewidth]{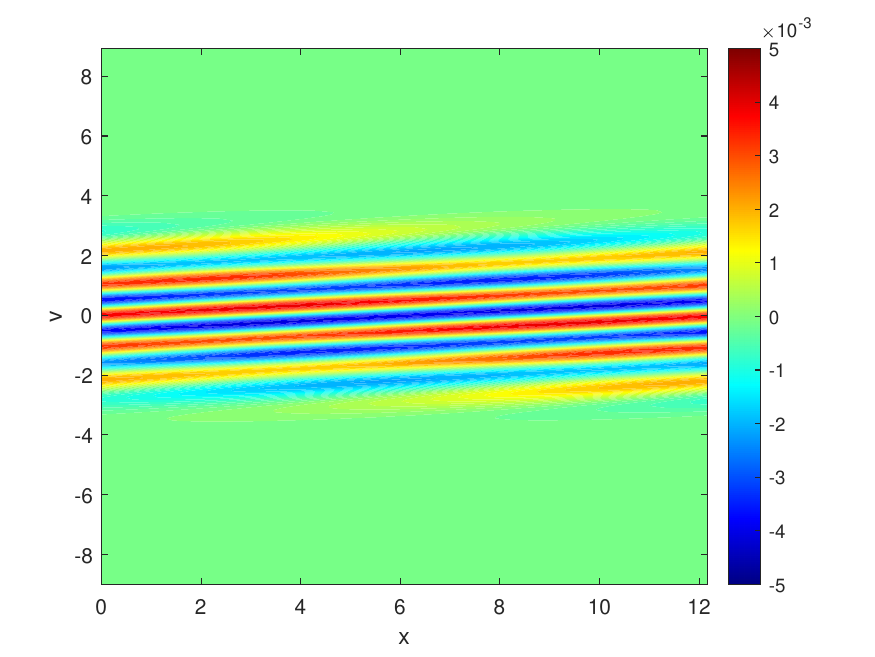}
				    	\subcaption{$t=10$} 
				    \end{subfigure}
					\begin{subfigure}[b]{0.32\linewidth}
						\includegraphics[width=1\linewidth]{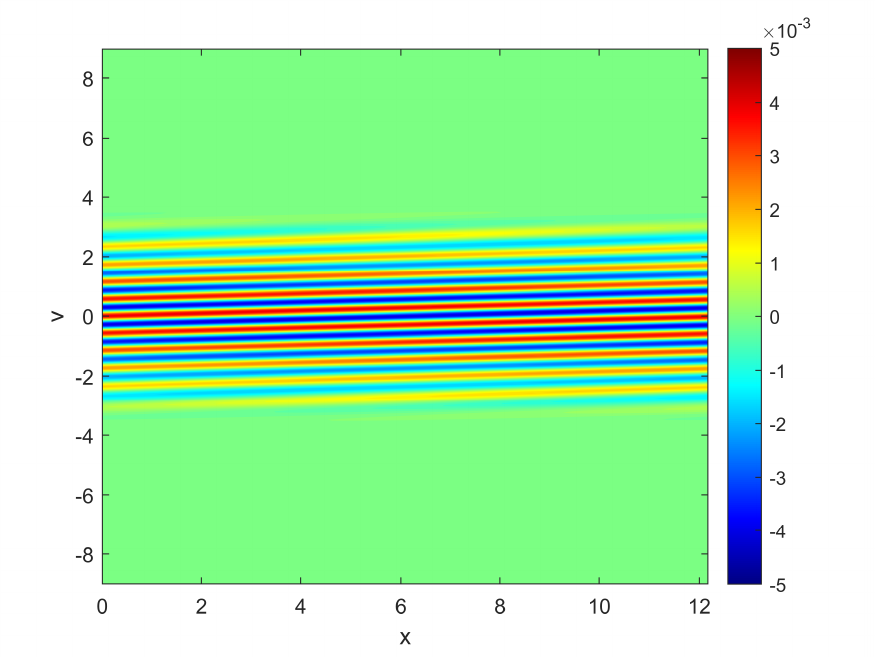}
						\subcaption{$t=20$} 
					\end{subfigure}
					\begin{subfigure}[b]{0.32\linewidth}
						\includegraphics[width=1\linewidth]{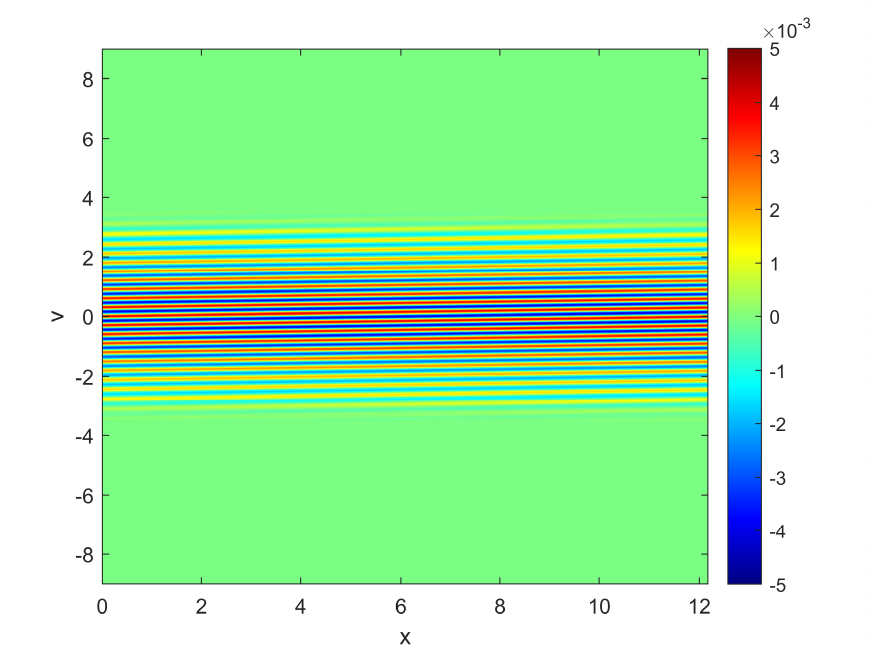}
						\subcaption{$t=40$} 
					\end{subfigure}
					\begin{subfigure}[b]{0.32\linewidth}
						\includegraphics[width=1\linewidth]{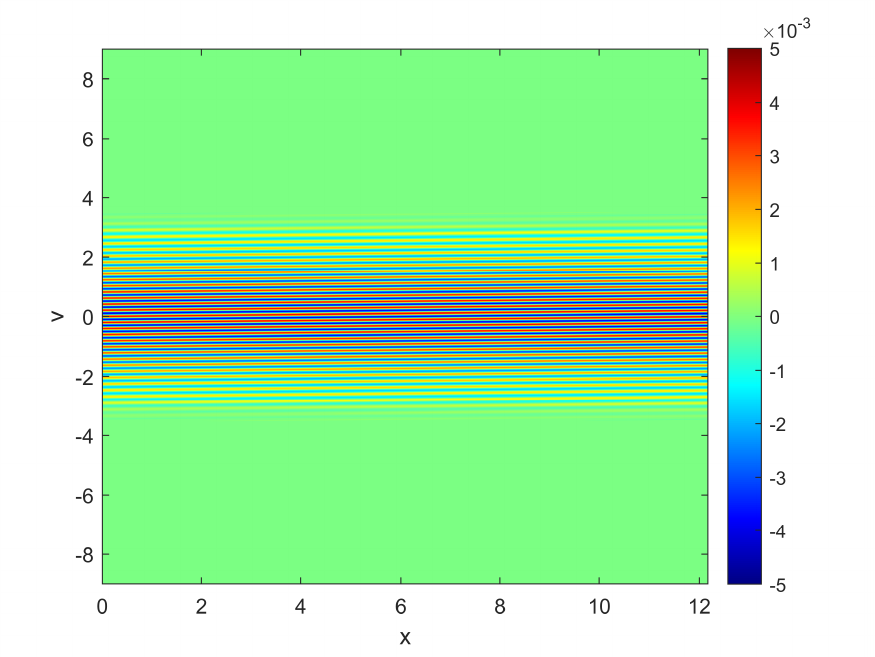}
						\subcaption{$t=60$} 
					\end{subfigure}		
					\begin{subfigure}[b]{0.32\linewidth}
						\includegraphics[width=1\linewidth]{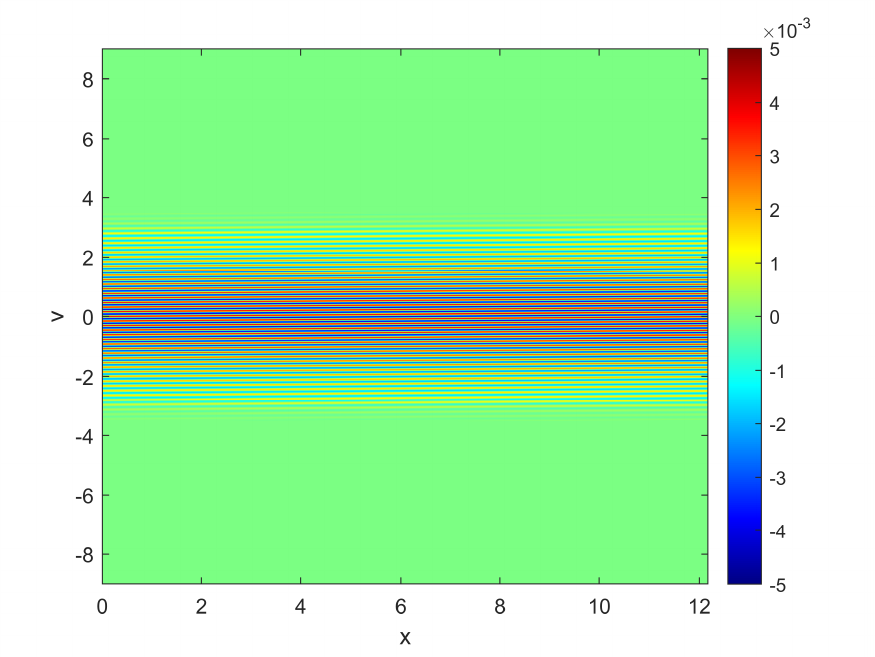}
						\subcaption{$t=80$} 
					\end{subfigure}				
					\caption{Weak Landau damping. Phase space profiles of $M$ at $t=0$, and the perturbation $f-M$ at subsequent times.}\label{weak phase}
				\end{figure}

\begin{figure}[h]
	\centering

    \begin{subfigure}[b]{0.24\linewidth}
		\includegraphics[width=1\linewidth]{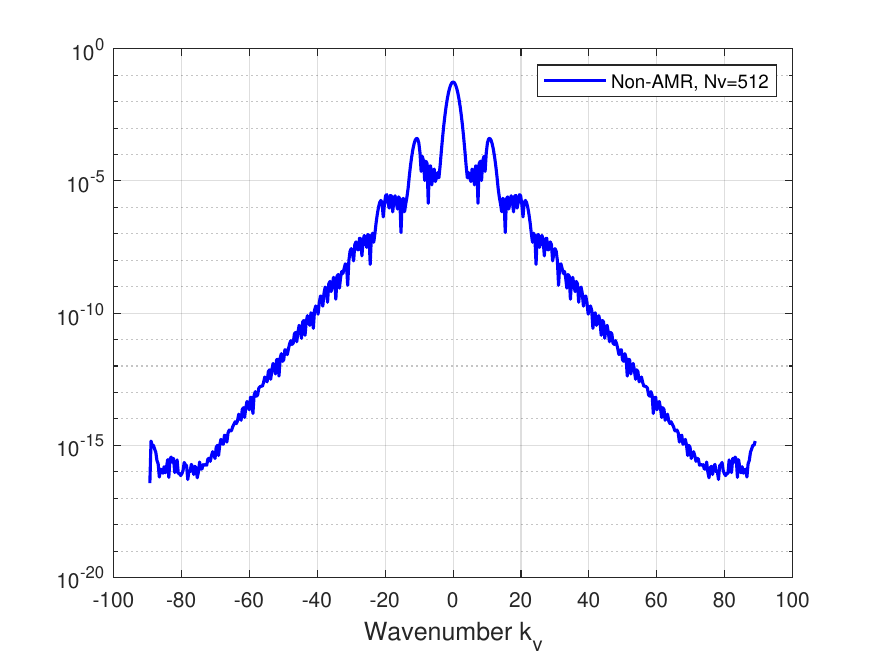}
		\subcaption{$t=20$} 
	\end{subfigure}
	\begin{subfigure}[b]{0.24\linewidth}
		\includegraphics[width=1\linewidth]{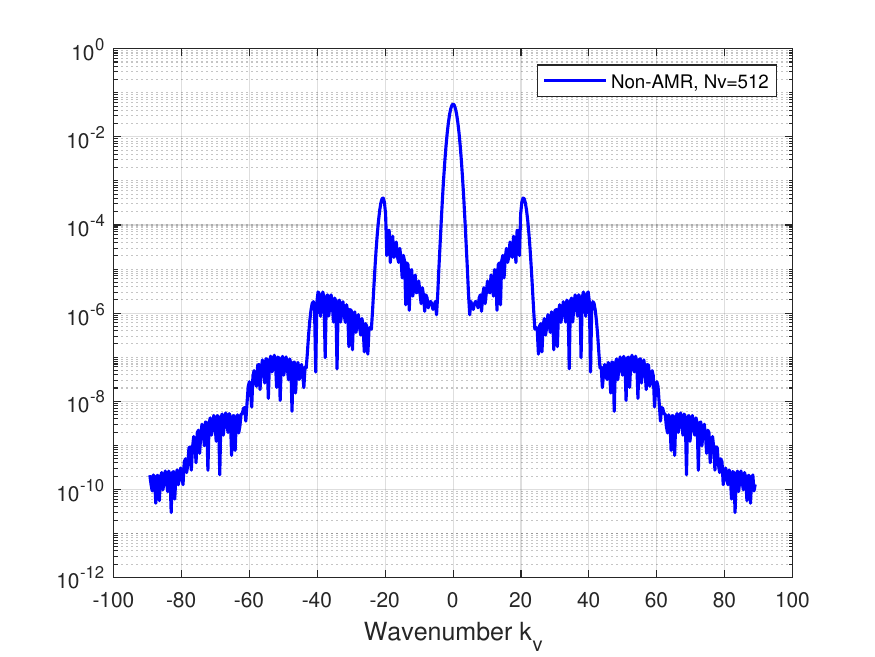}
		\subcaption{$t=40$} 
	\end{subfigure}
	\begin{subfigure}[b]{0.24\linewidth}		\includegraphics[width=1\linewidth]{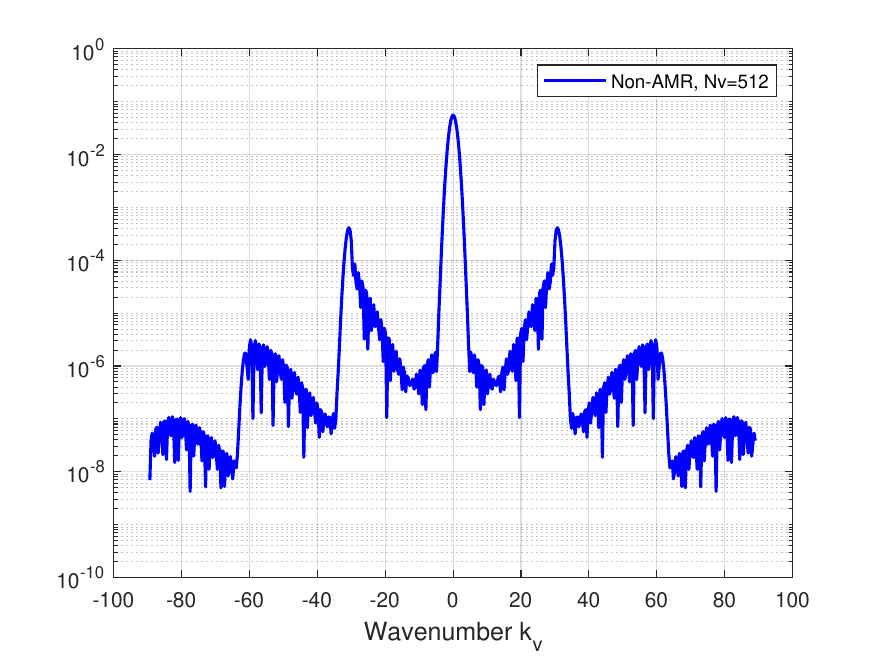}
		\subcaption{$t=60$} 
	\end{subfigure}		
	\begin{subfigure}[b]{0.24\linewidth}
		\includegraphics[width=1\linewidth]{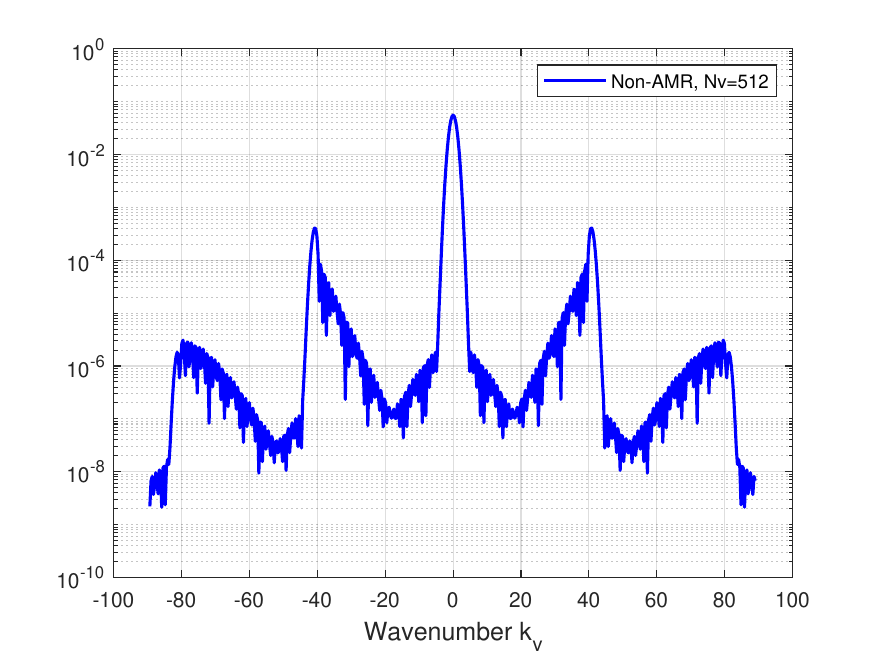}
		\subcaption{$t=80$} 
	\end{subfigure}		

    \begin{subfigure}[b]{0.24\linewidth}
		\includegraphics[width=1\linewidth]{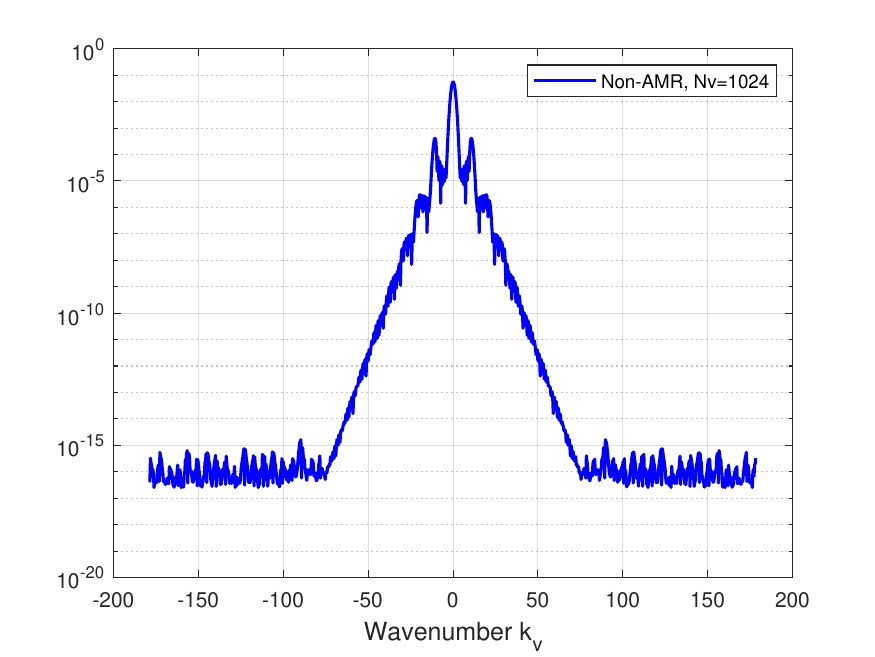}
		\subcaption{$t=20$} 
	\end{subfigure}
	\begin{subfigure}[b]{0.24\linewidth}
		\includegraphics[width=1\linewidth]{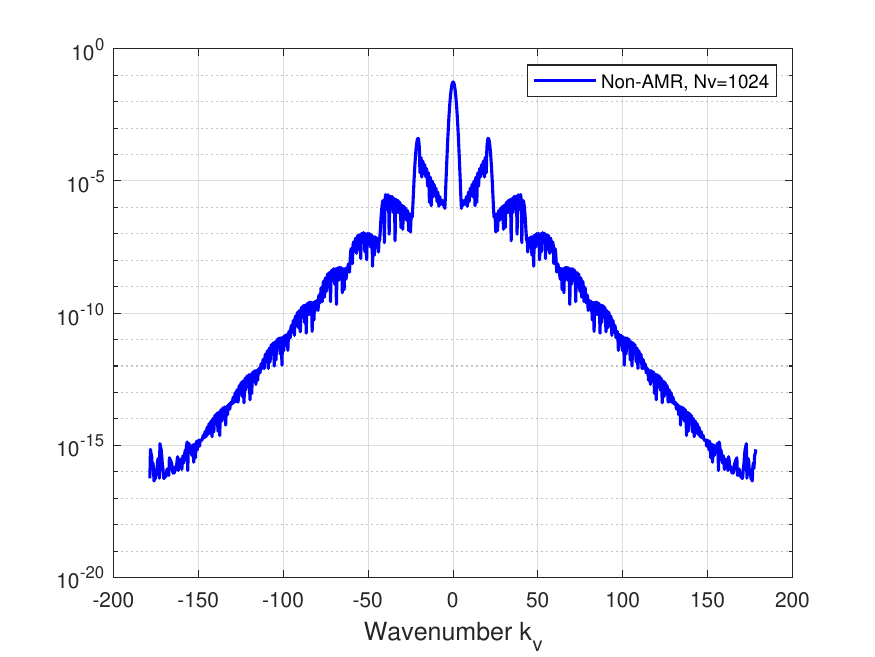}
		\subcaption{$t=40$} 
	\end{subfigure}
	\begin{subfigure}[b]{0.24\linewidth}		\includegraphics[width=1\linewidth]{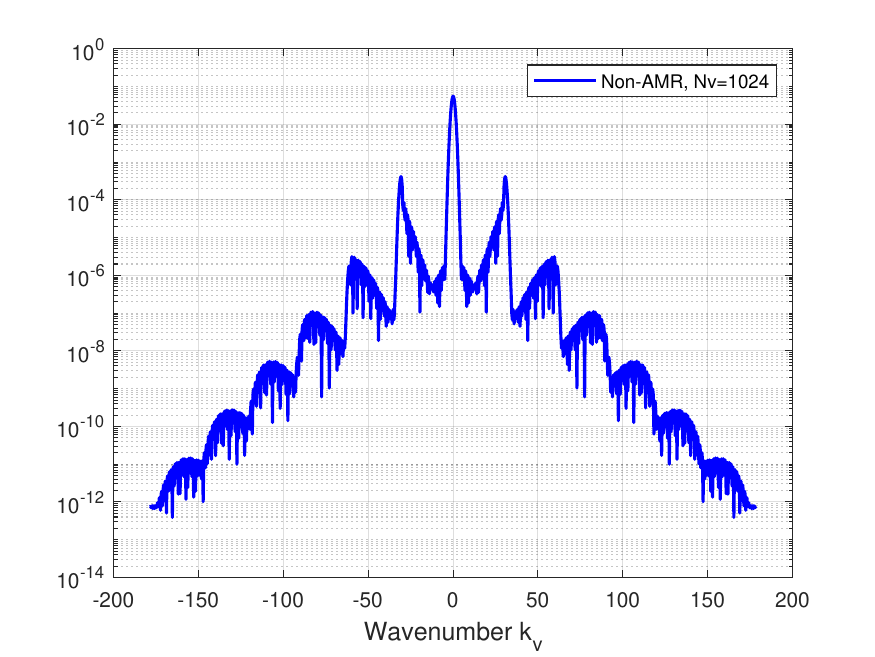}
		\subcaption{$t=60$} 
	\end{subfigure}		
	\begin{subfigure}[b]{0.24\linewidth}
		\includegraphics[width=1\linewidth]{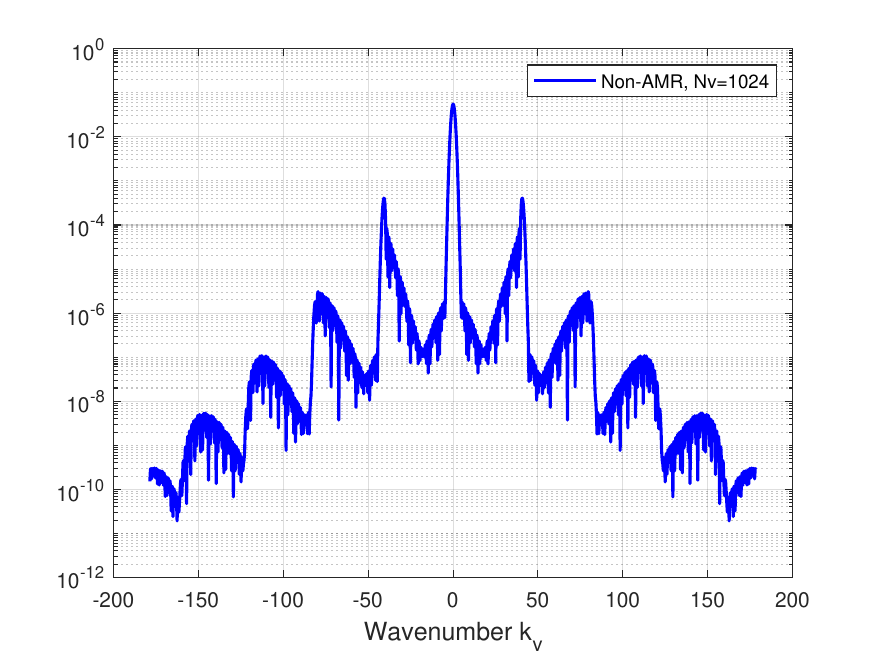}
		\subcaption{$t=80$} 
	\end{subfigure}		

    \begin{subfigure}[b]{0.24\linewidth}
		\includegraphics[width=1\linewidth]{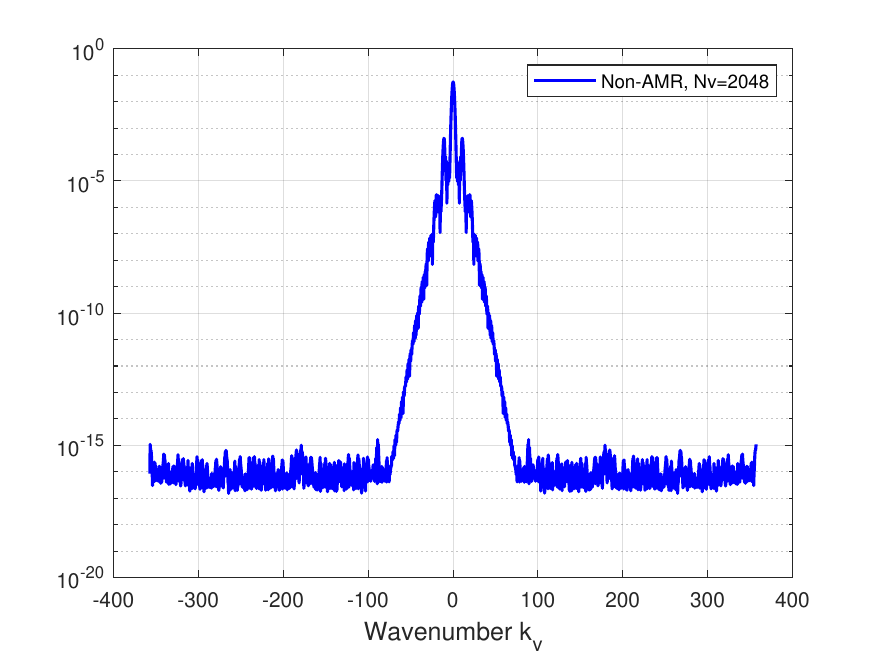}
		\subcaption{$t=20$} 
	\end{subfigure}
	\begin{subfigure}[b]{0.24\linewidth}
		\includegraphics[width=1\linewidth]{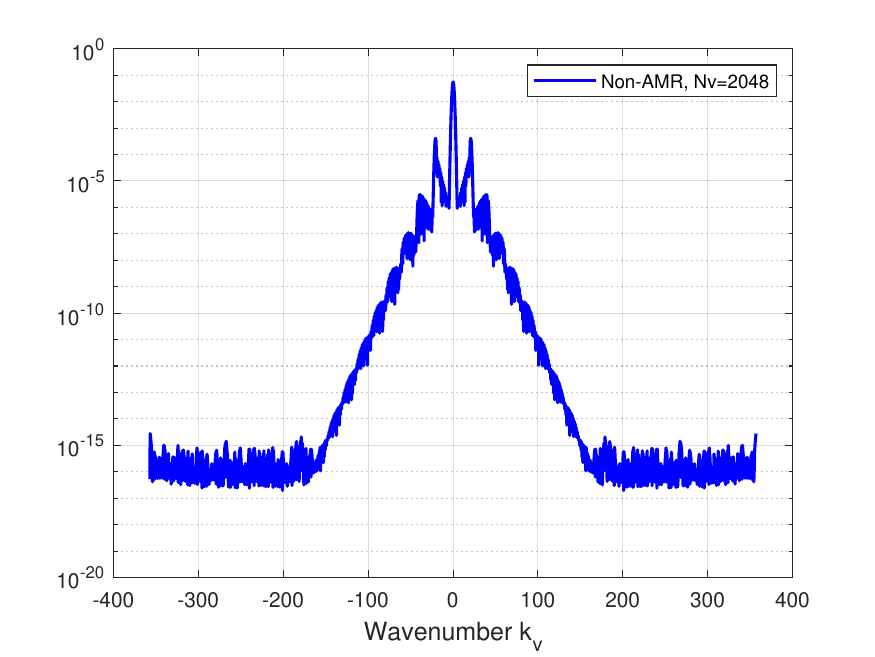}
		\subcaption{$t=40$} 
	\end{subfigure}
	\begin{subfigure}[b]{0.24\linewidth}		\includegraphics[width=1\linewidth]{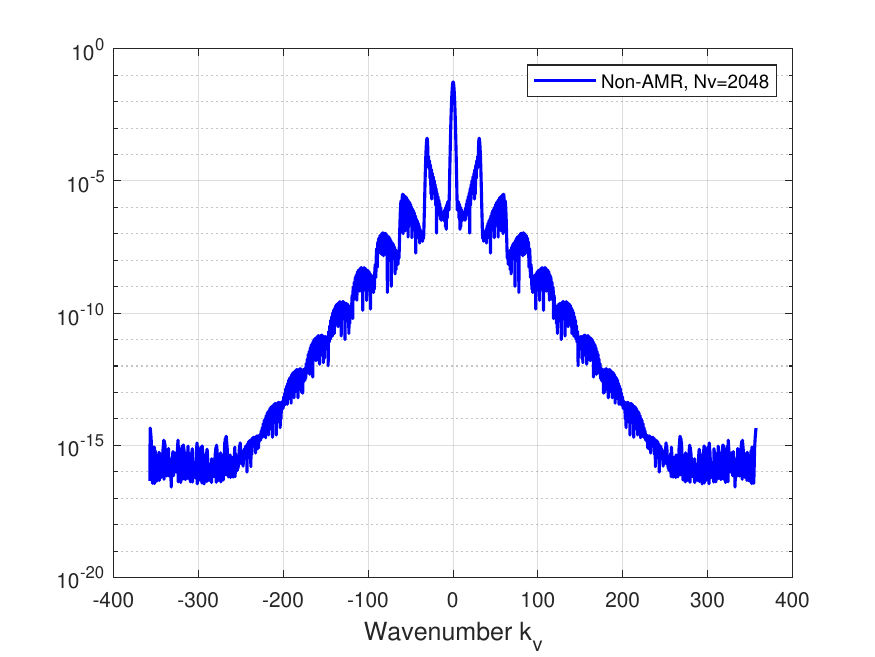}
		\subcaption{$t=60$} 
	\end{subfigure}		
	\begin{subfigure}[b]{0.24\linewidth}
		\includegraphics[width=1\linewidth]{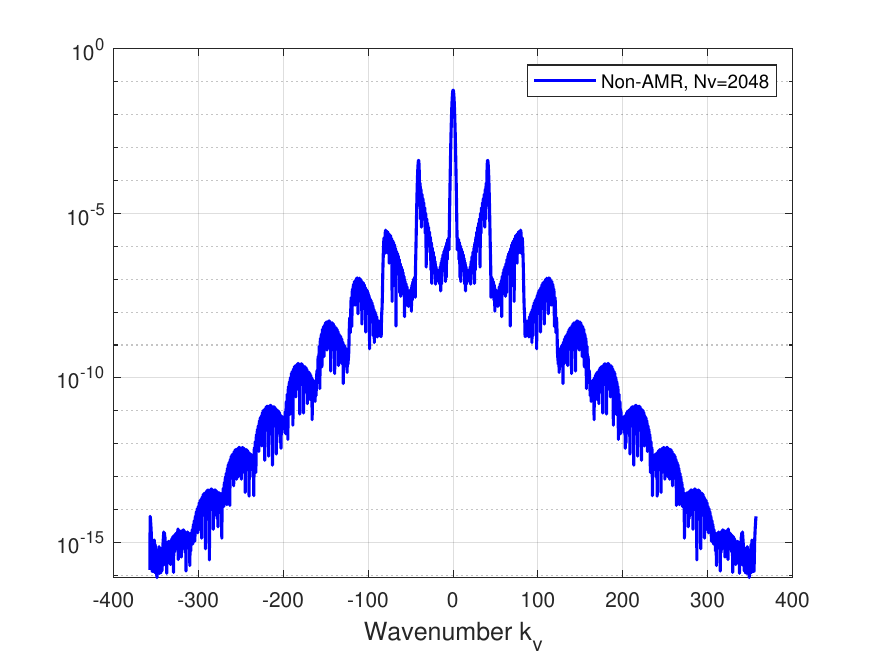}
		\subcaption{$t=80$} 
	\end{subfigure}

\begin{subfigure}[b]{0.24\linewidth}
		\includegraphics[width=1\linewidth]{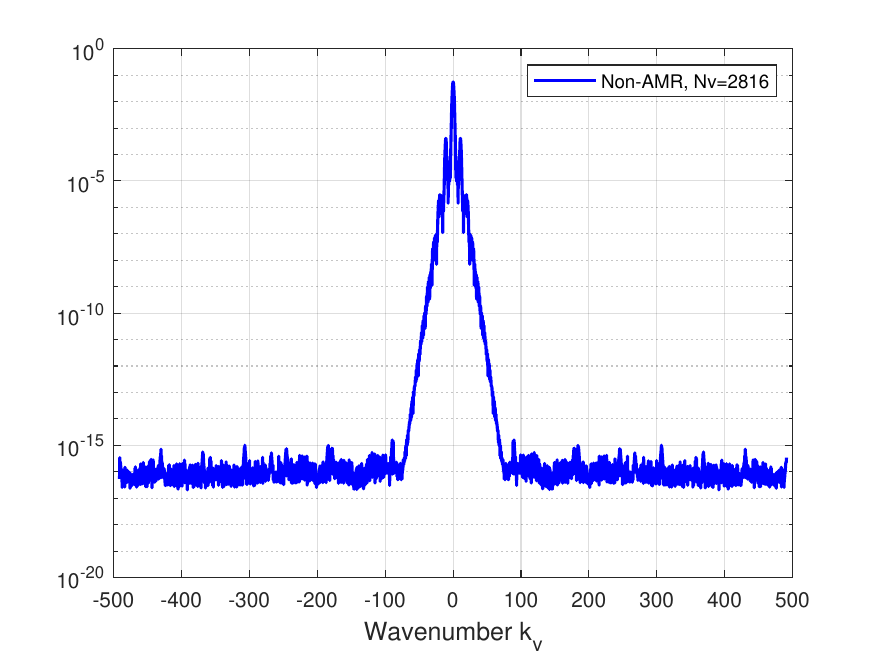}
		\subcaption{$t=20$} 
	\end{subfigure}
	\begin{subfigure}[b]{0.24\linewidth}
		\includegraphics[width=1\linewidth]{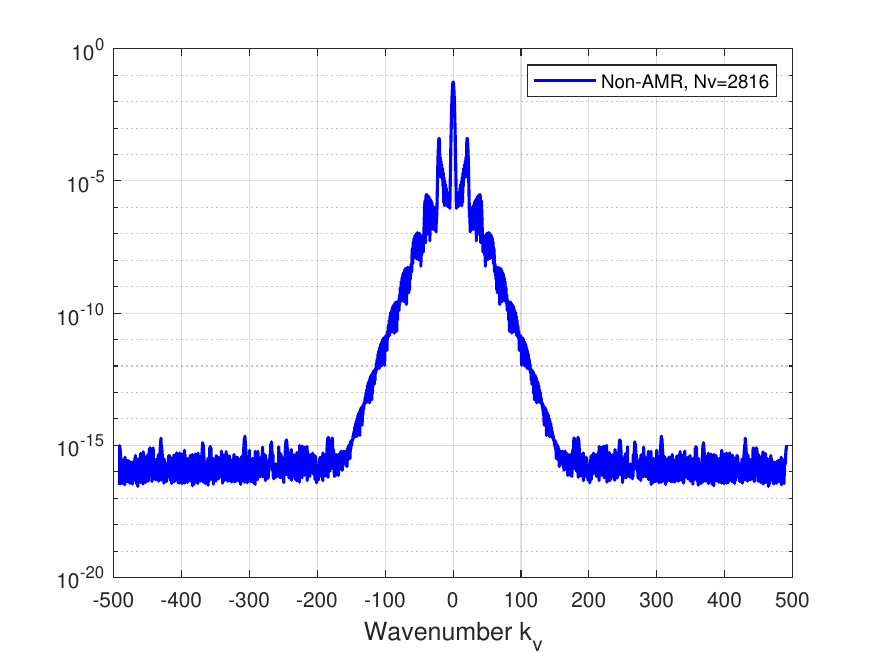}
		\subcaption{$t=40$} 
	\end{subfigure}
	\begin{subfigure}[b]{0.24\linewidth}		\includegraphics[width=1\linewidth]{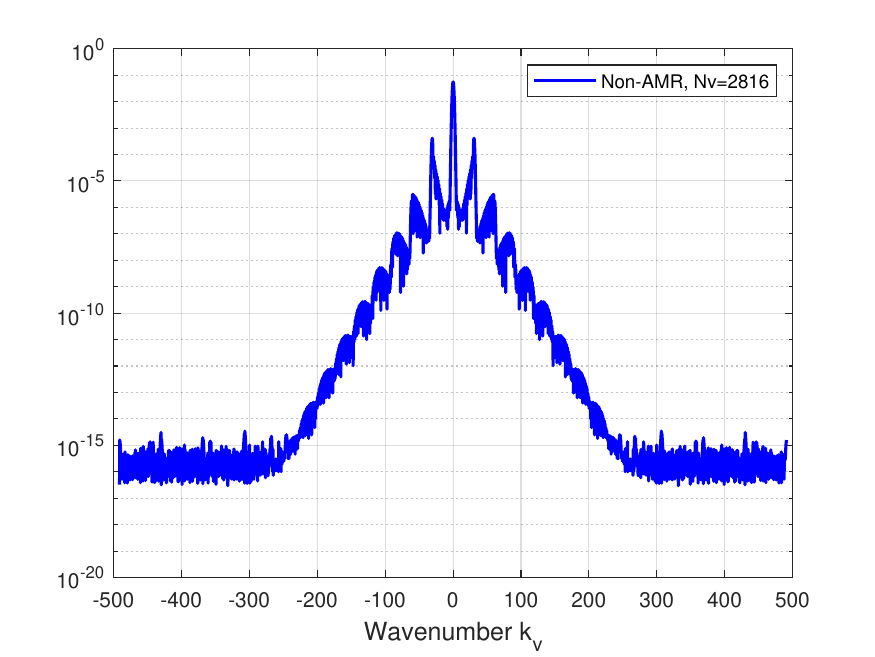}
		\subcaption{$t=60$} 
	\end{subfigure}		
	\begin{subfigure}[b]{0.24\linewidth}
		\includegraphics[width=1\linewidth]{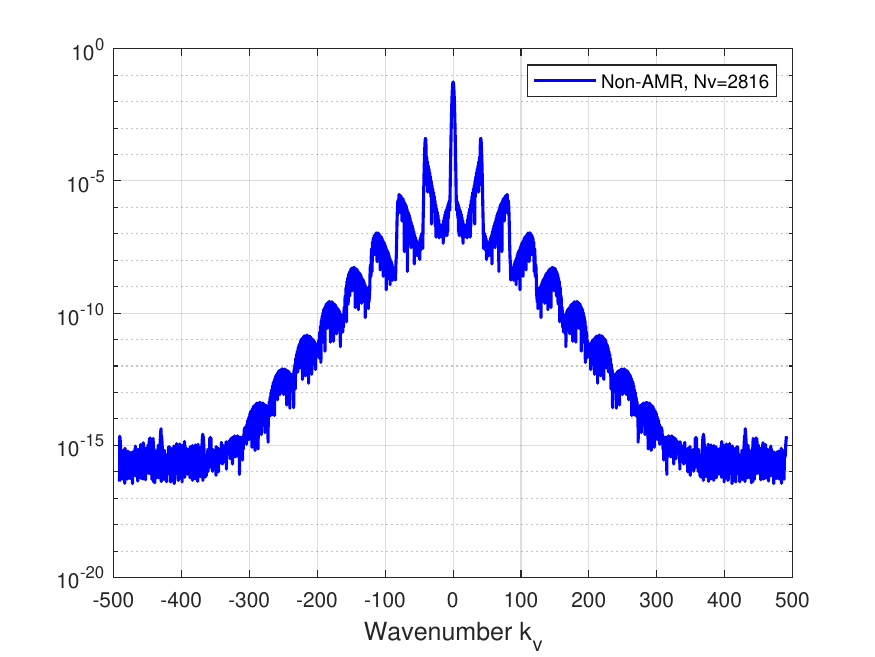}
		\subcaption{$t=80$} 
	\end{subfigure}		
     \begin{subfigure}[b]{0.24\linewidth}
		\includegraphics[width=1\linewidth]{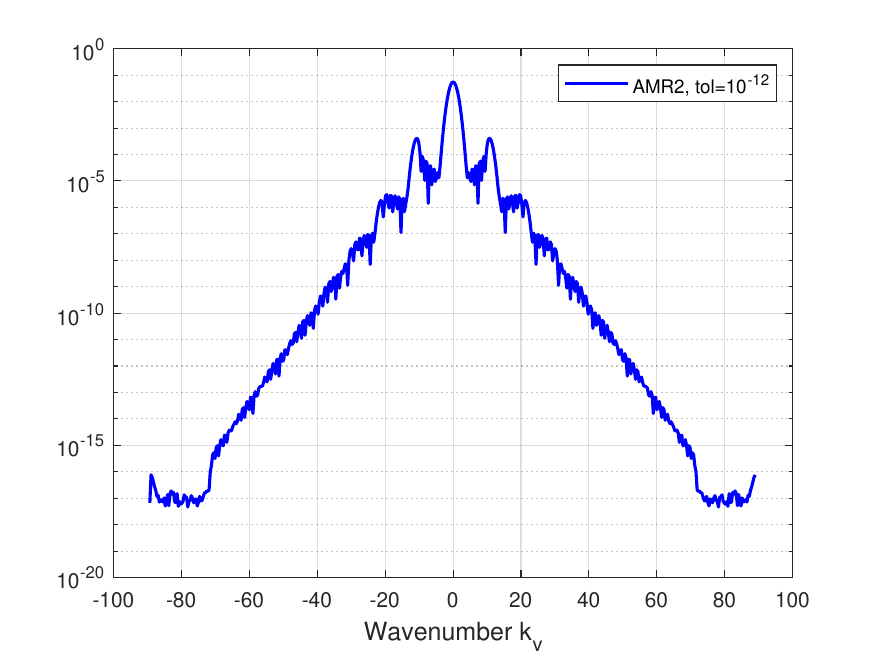}
		\subcaption{$t=20$} 
	\end{subfigure}
	\begin{subfigure}[b]{0.24\linewidth}
		\includegraphics[width=1\linewidth]{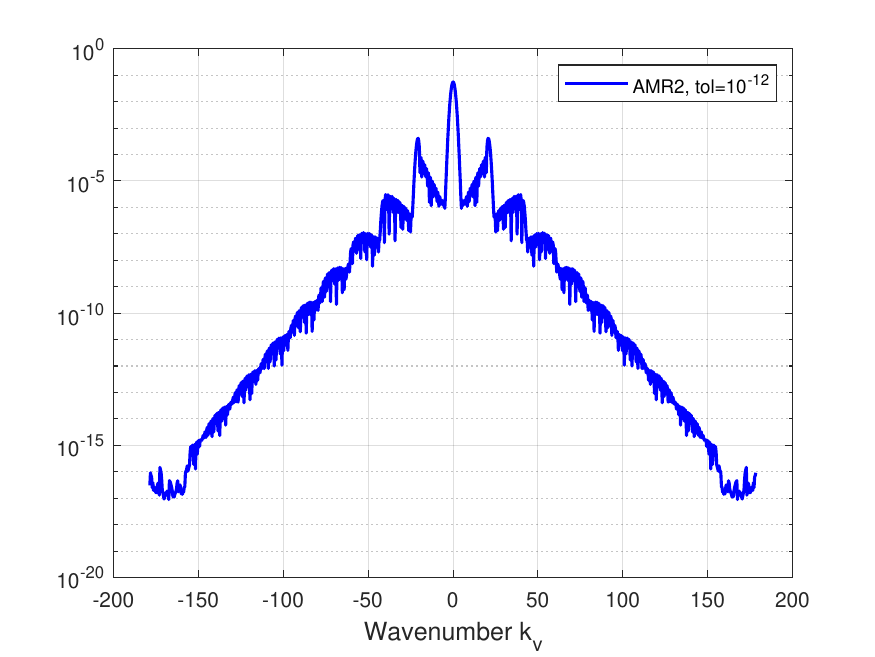}
		\subcaption{$t=40$} 
	\end{subfigure}
	\begin{subfigure}[b]{0.24\linewidth}		\includegraphics[width=1\linewidth]{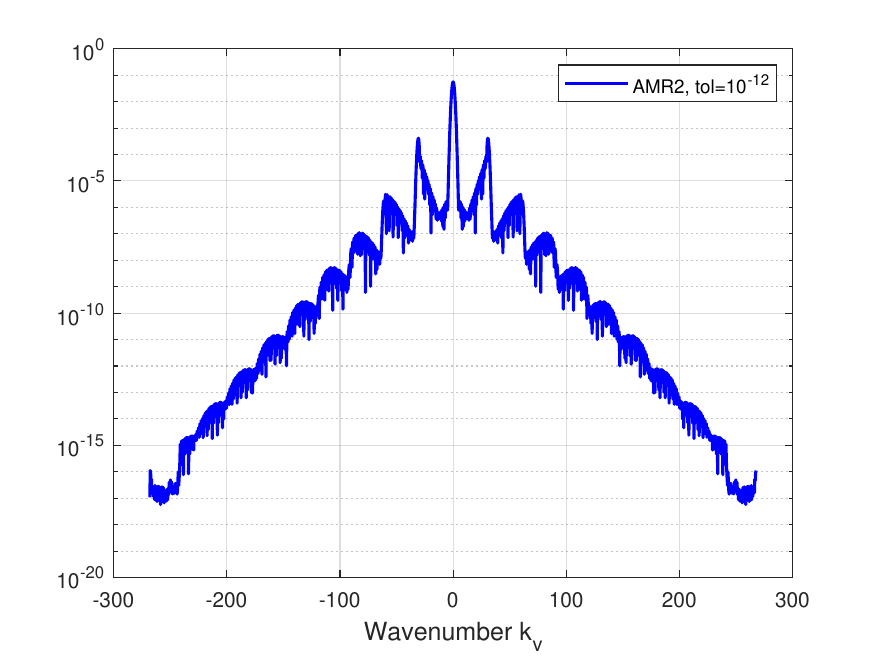}
		\subcaption{$t=60$} 
	\end{subfigure}		
	\begin{subfigure}[b]{0.24\linewidth}
		\includegraphics[width=1\linewidth]{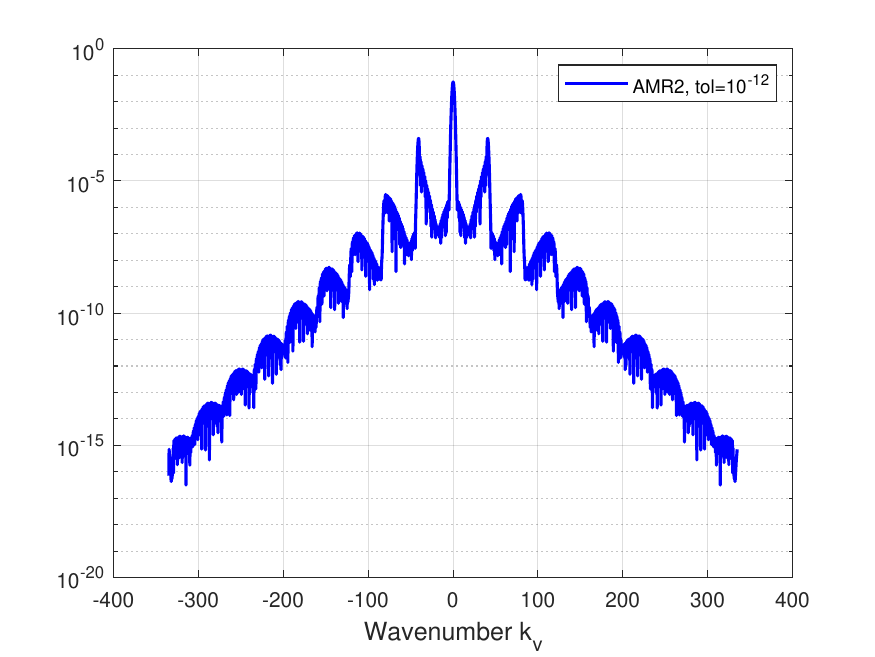}
		\subcaption{$t=80$} 
	\end{subfigure}	
    \caption{Weak Landau damping. Time evolution of the averaged Fourier mode magnitude in velocity $\frac{1}{N_x}\sum_l |\hat{f}(l,k_v)|$.
    Each row corresponds to Non-AMR with $N_x=32$ and various $N_v=512, 1024, 2048, 2816$. The last row reports the result for AMR2 with $\tau_{tol}=10^{-12}$).}\label{weak Fourier mode}
    
\end{figure}

					\subsection{Strong Landau damping}
					The next example is the well-known Strong Landau damping problem. The initial distribution function is the same as \eqref{init weak}, but with a larger perturbation amplitude $\alpha = 0.5$, which leads to the nonlinear behavior of the solutions. We set $k = 0.5$ and $v_{th} = 1$, and the computational domain is $[0,4\pi]\times [-9,9]$.

						\begin{figure}[h]
						\centering
						\begin{subfigure}[b]{0.4\linewidth}
							\includegraphics[width=1\linewidth]{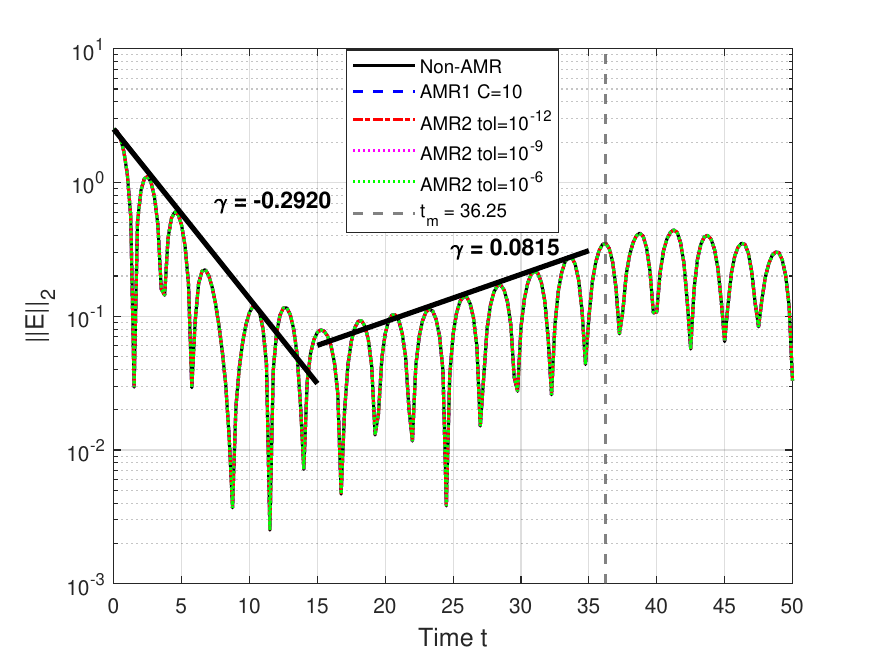}
							\subcaption{$L^2$-norm of $E$\\ \hspace{3mm}}\label{strong a} 
						\end{subfigure}	
					\begin{subfigure}[b]{0.4\linewidth}
						\includegraphics[width=1\linewidth]{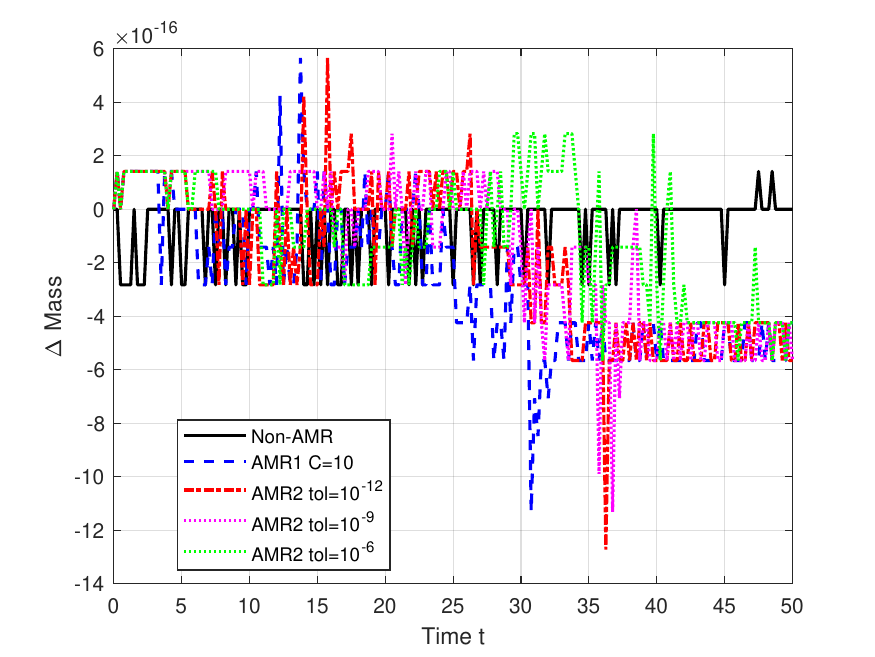}
						\subcaption{$\displaystyle \frac{\int f dvdx - \int f_0 dvdx}{\int f_0 dvdx}$}\label{strong b}
					\end{subfigure}	
					\begin{subfigure}[b]{0.4\linewidth}
					\includegraphics[width=1\linewidth]{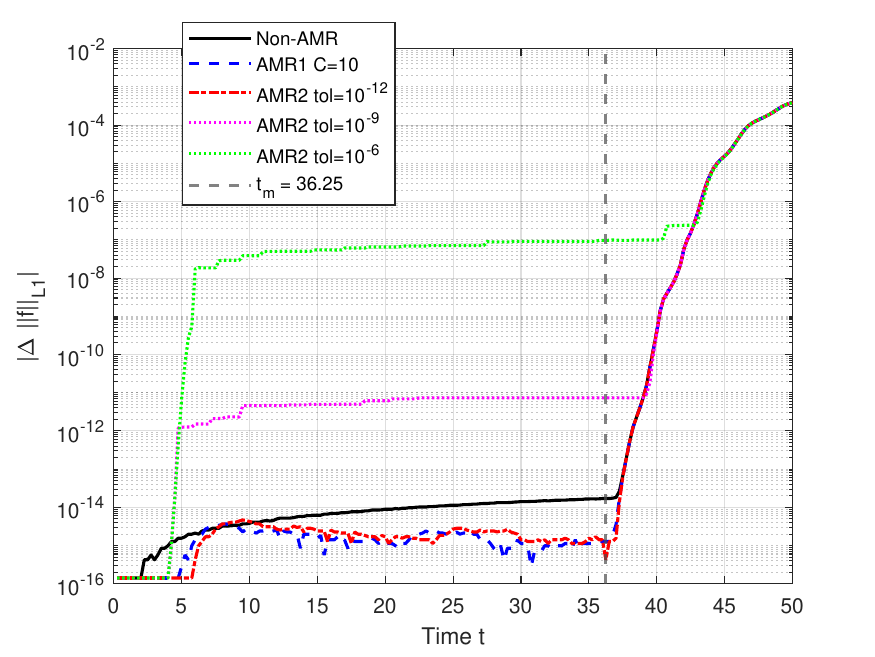}
					\subcaption{ $\displaystyle\frac{\|f^n\|_1-\|f^0\|_1}{\|f^0\|_1}$}\label{strong c}
				\end{subfigure}	
						\begin{subfigure}[b]{0.4\linewidth}
							\includegraphics[width=1\linewidth]{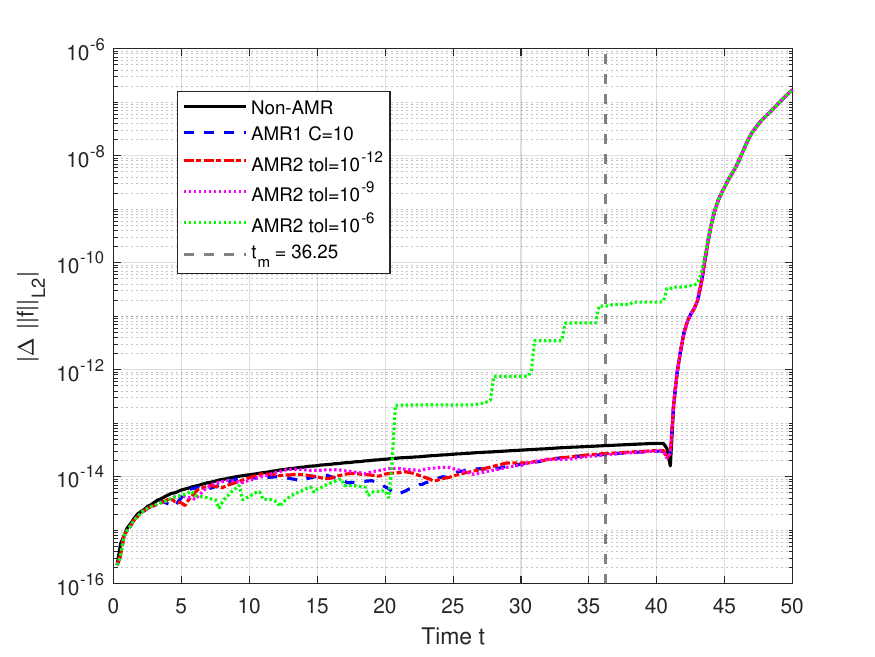}
							\subcaption{$\displaystyle\frac{\|f^n\|_2-\|f^0\|_2}{\|f^0\|_2}$}\label{strong d}
						\end{subfigure}	
					
						\begin{subfigure}[b]{0.4\linewidth}
						\includegraphics[width=1\linewidth]{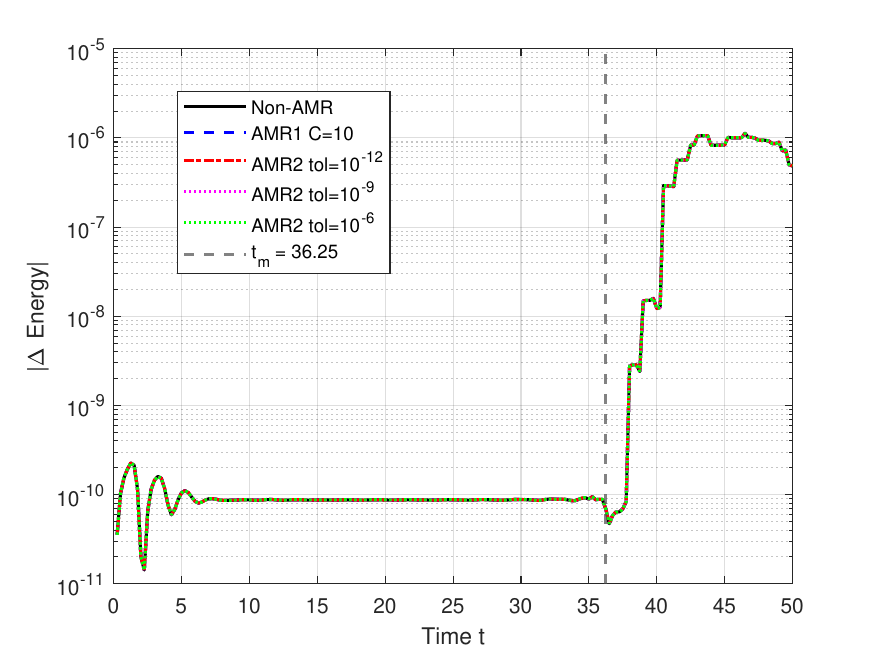}
						\subcaption{$\displaystyle \frac{Energy(t)-Energy(0)}{Energy(0)}$}\label{strong e}
					\end{subfigure}				
						\begin{subfigure}[b]{0.4\linewidth}
							\includegraphics[width=1\linewidth]{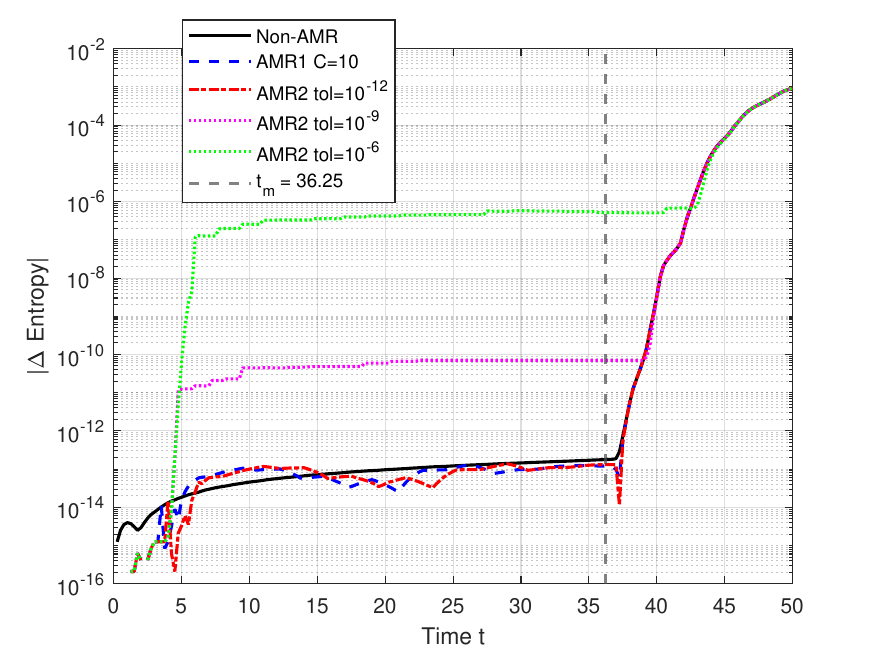}
							\subcaption{$\displaystyle \frac{Entropy(t)-Entropy(0)}{Entropy(0)}$}\label{strong f}
						\end{subfigure}

						\caption{Strong Landau damping.}\label{Fig strong 1}
					\end{figure}
                 
In Figure \ref{strong a}, we present the time evolution of the electric field in the $L^2$-norm, which demonstrates good agreement with the theoretical linear decay rate of $\gamma_1 = -0.2920$ and the nonlinear growth rate of $\gamma_2 = 0.0815$ \cite{RS}. Furthermore, Figures \ref{strong b}-\ref{strong f} display the time evolution of relative errors in the conserved quantities. In these figures, the vertical dashed line at $t_m \approx 36.25$ denotes the critical time at which the adaptive grid resolution for the AMR2 ($\tau_{tol}=10^{-12}$) case first hits the prescribed memory limit (see Figure \ref{strong N}).
        
During the dynamically resolved regime ($t < t_m$), the proposed adaptive method accurately preserves these physical invariants, validating its reliability and high fidelity. Beyond $t_m$, however, the depletion of available computational resources restricts any further grid expansion. Consequently, as indicated in the figures, most physical invariants—such as the $L^1$-norm, total energy, and entropy—begin to exhibit a noticeable loss of accuracy beyond $t_m$.

In contrast, as expected from the numerical formulation, the discrete total mass remains strictly invariant throughout the entire simulation. It shows no degradation even after $t_m$, confirming that the zero-padding operation rigorously preserves the fundamental total mass regardless of the grid resolution constraints. Note that the decrease in the $L^2$-norm implies the loss of high-frequency information due to the insufficient wavenumber domain; as the restricted grid can no longer accommodate the severe phase-space filamentation, unresolved high-wavenumber modes are inevitably truncated, leading to a gradual decay in spectral energy.

This limitation is inherently tied to the extreme physical demands of the strong Landau damping problem; severe phase-space filamentation causes the grid resolution to increase exponentially over time. As illustrated in Figure \ref{strong N}, the simulation reaches the hardware memory capacity near $t_m \approx 36.25$. Nevertheless, the results remain highly robust up to this critical threshold, demonstrating that our scheme extracts the maximum possible resolution and efficiency from the available computational resources before the onset of numerical degradation. 
		
The measured computational times reported in Figure \ref{strong cpu} demonstrate that the adaptive technique significantly reduces unnecessary computational overhead before reaching the hardware memory limit. Finally, the phase space profiles associated with AMR2 ($\tau_{tol}=10^{-12}$) are displayed in Figure \ref{strong phase}, clearly illustrating the complex vortex structures and severe filamentation.

               In contrast to the mild spectral broadening observed in the previous weak Landau damping test, the severe nonlinear vortex formation in this problem pushes the computational resources to their limits. To demonstrate this, in Figure \ref{strong Fourier mode} we illustrate the temporal evolution of the averaged Fourier modes. The first four rows present the results obtained using globally refined fixed grids with $N_x = 2816$ and progressively increasing velocity resolutions ($N_v = 4096, 8192$, $16384$ and $23936$),
                while the fifth row displays the results of the proposed AMR2 method ($\tau_{tol}=10^{-12}$). 
As clearly evidenced by the fixed-grid simulations: a smaller number of Fourier modes inevitably triggers an earlier onset of aliasing, as the limited spectral domain fails to accommodate the explosively expanding high-frequency tail. Prior to the critical time $t_m \approx 36.25$, the well-resolved configurations, including our adaptive scheme, accurately capture the complex spectral dynamics without numerical degradation. Beyond this time, it is evident that the magnitude of Fourier mode is large, which explains the noticeable loss of accuracy.

						\begin{figure}[h]
						\centering
						\begin{subfigure}[b]{0.4\linewidth}
							\includegraphics[width=1\linewidth]{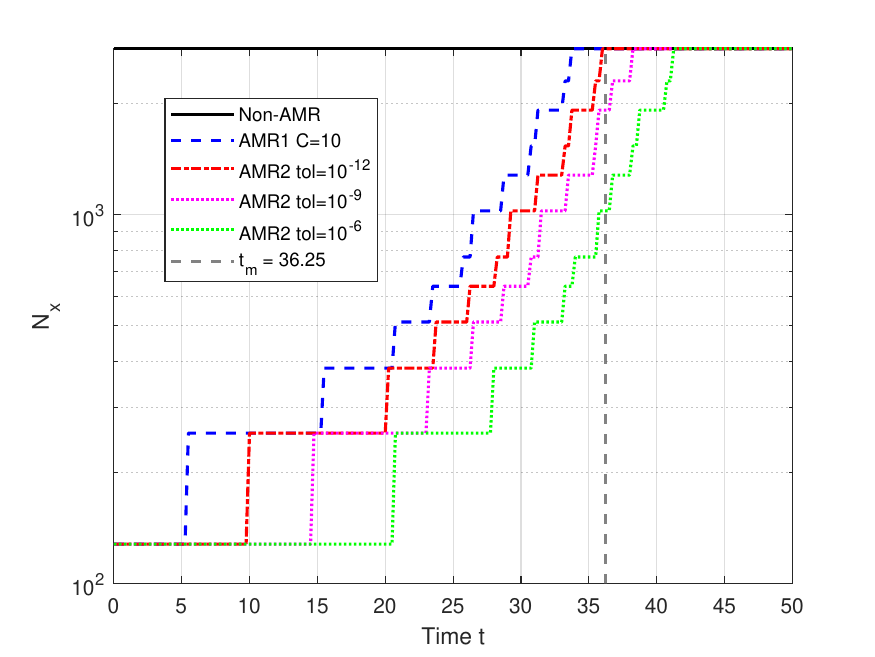}
							\subcaption{$N_x$} 
						\end{subfigure}	
						\begin{subfigure}[b]{0.4\linewidth}
							\includegraphics[width=1\linewidth]{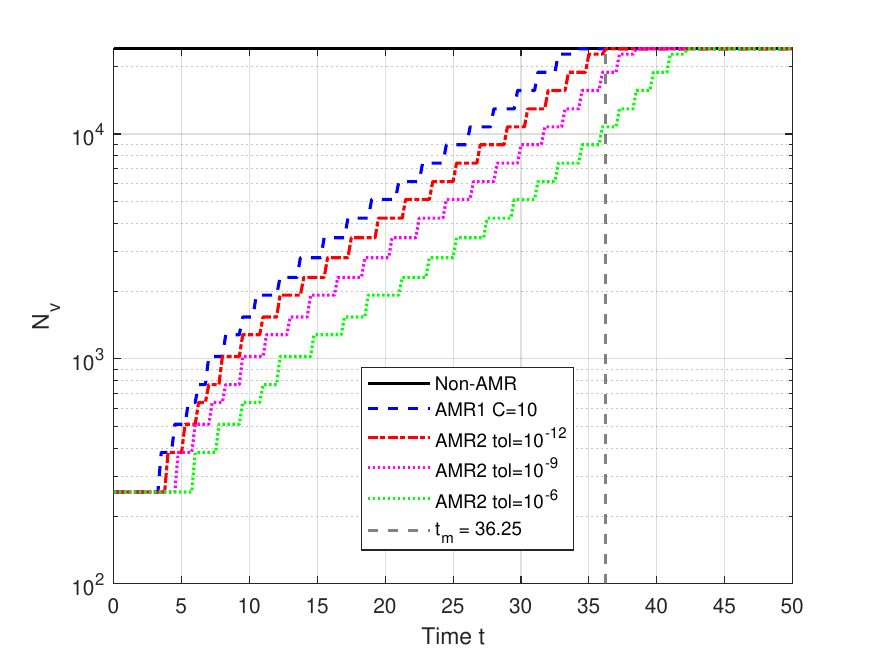}
							\subcaption{$N_v$} 
						\end{subfigure}	
						\begin{subfigure}[b]{0.4\linewidth}
							\includegraphics[width=1\linewidth]{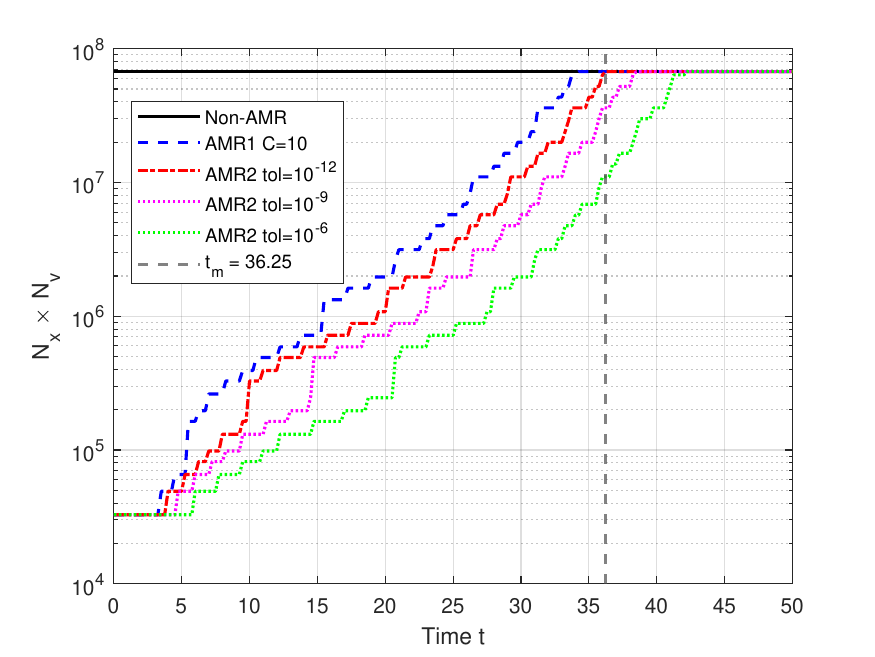}
							\subcaption{$N_x \times N_v$} 
						\end{subfigure}	
                        \begin{subfigure}[b]{0.4\linewidth}
						\includegraphics[width=1\linewidth]{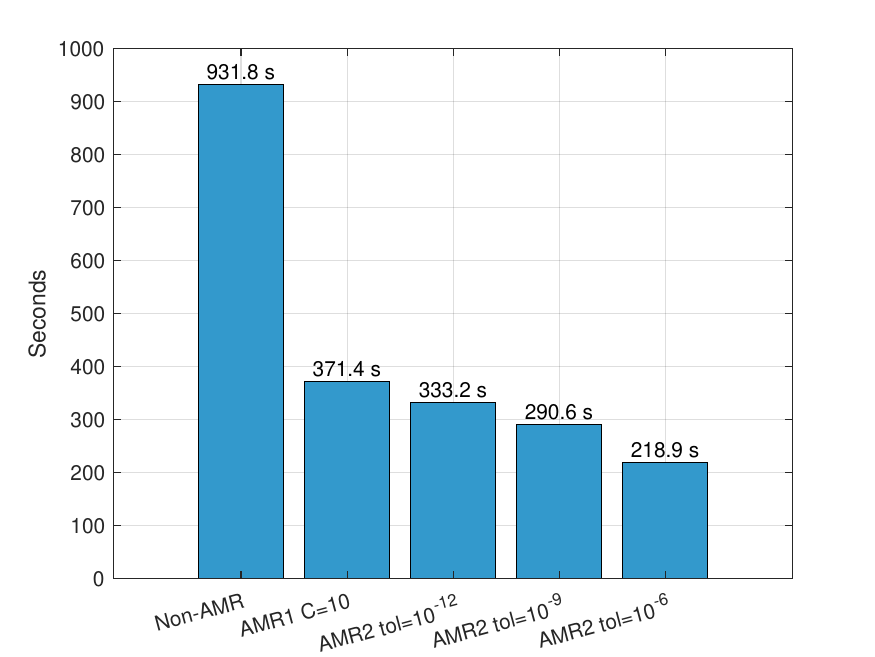}
						\subcaption{GPU time}\label{strong cpu} 
					\end{subfigure}
						\caption{Strong Landau damping. Time evolution of grid numbers and GPU time.}\label{strong N}
					\end{figure}		
\begin{figure}[h]
	\centering
	\begin{subfigure}[b]{0.32\linewidth}
		\includegraphics[width=1\linewidth]{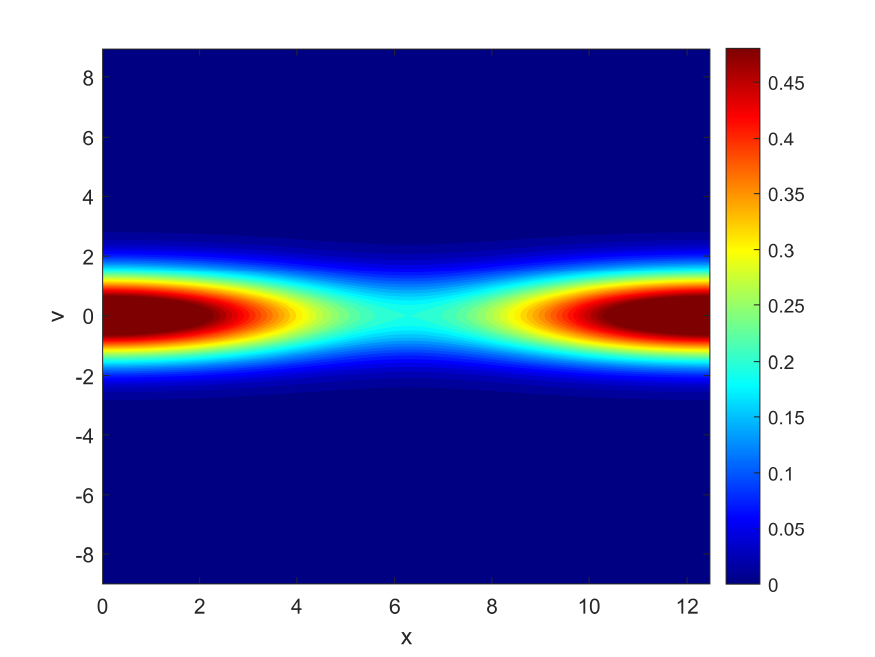}
		\subcaption{$t=0$} 
	\end{subfigure}
	\begin{subfigure}[b]{0.32\linewidth}
		\includegraphics[width=1\linewidth]{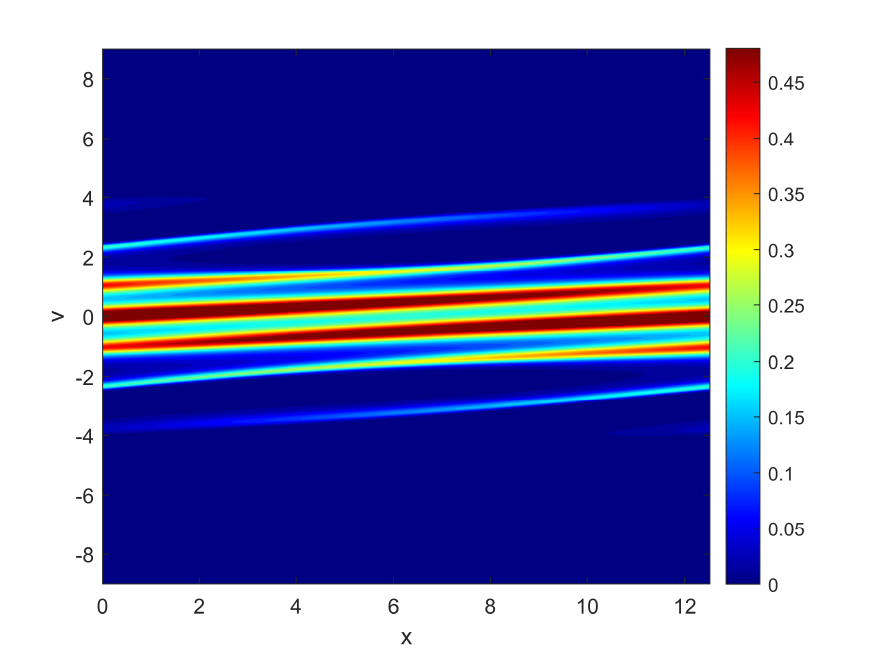}
		\subcaption{$t=10$} 
	\end{subfigure}
	\begin{subfigure}[b]{0.32\linewidth}
		\includegraphics[width=1\linewidth]{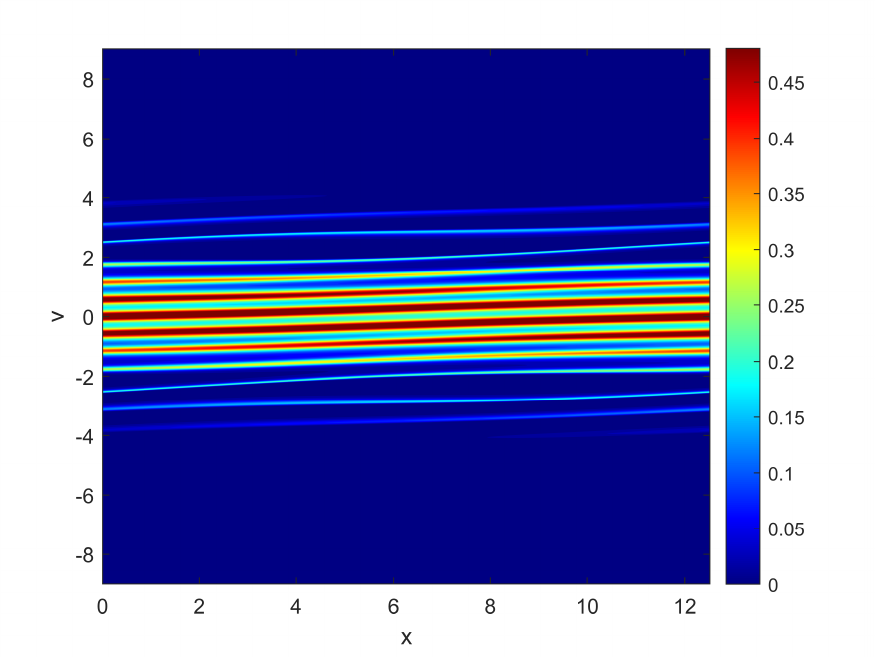}
		\subcaption{$t=20$} 
	\end{subfigure}
	\begin{subfigure}[b]{0.32\linewidth}
		\includegraphics[width=1\linewidth]{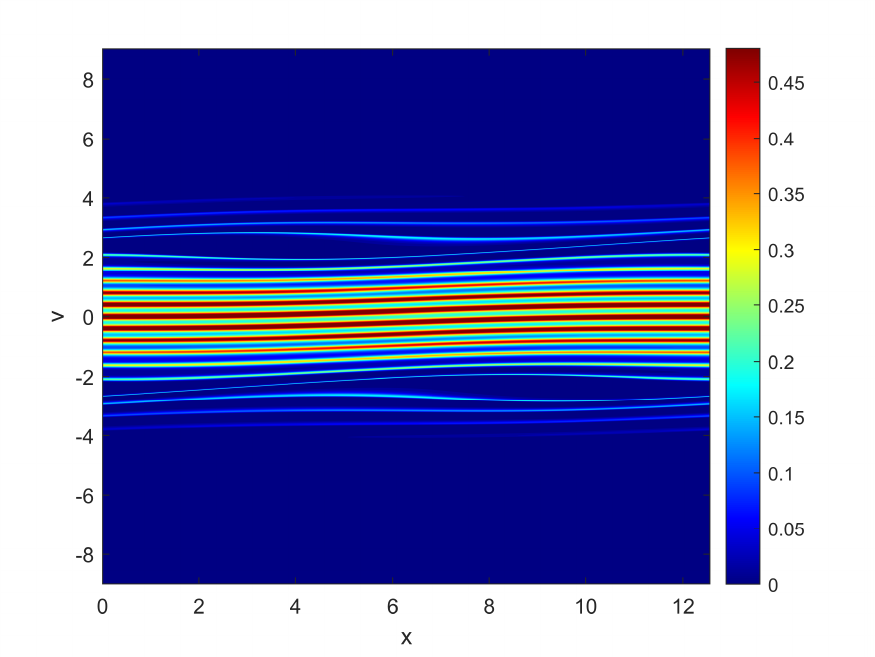}
		\subcaption{$t=30$} 
	\end{subfigure}
	\begin{subfigure}[b]{0.32\linewidth}		\includegraphics[width=1\linewidth]{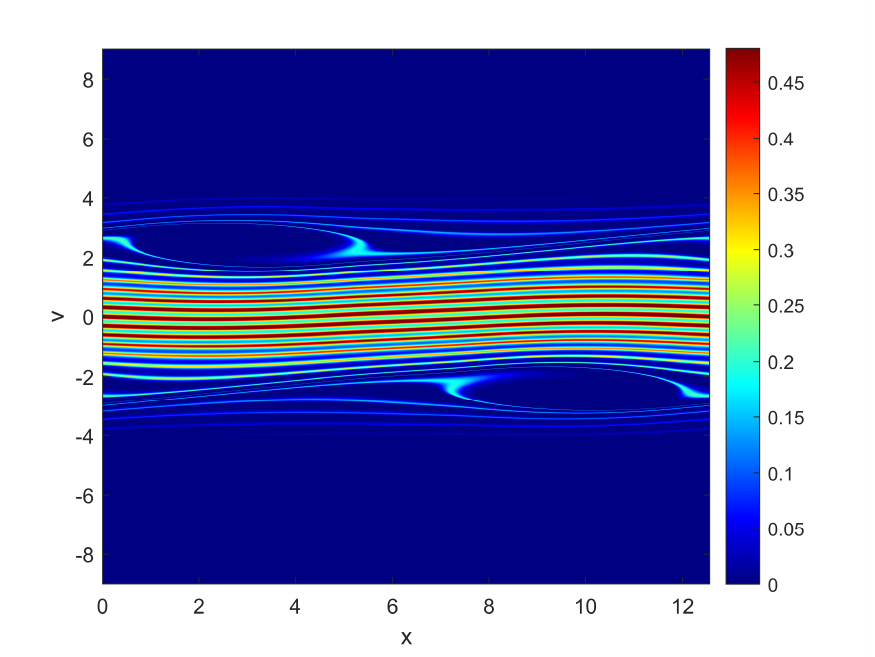}
		\subcaption{$t=40$} 
	\end{subfigure}		
	\begin{subfigure}[b]{0.32\linewidth}
		\includegraphics[width=1\linewidth]{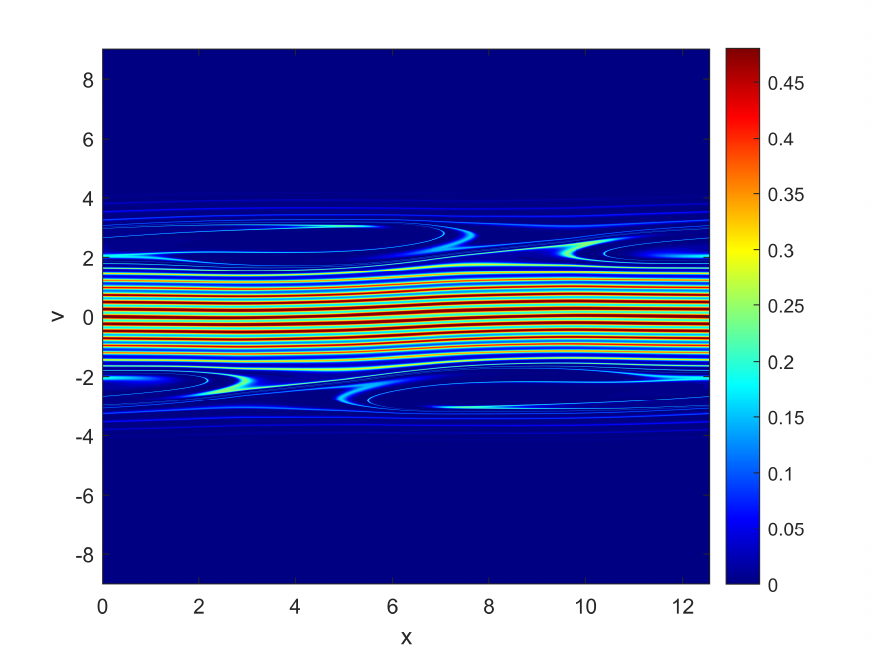}
		\subcaption{$t=50$} 
	\end{subfigure}				
	\caption{Strong Landau damping. Phase space profile of $f$.}\label{strong phase}
\end{figure}

\begin{figure}[h]
	\centering

    \begin{subfigure}[b]{0.24\linewidth}
		\includegraphics[width=1\linewidth]{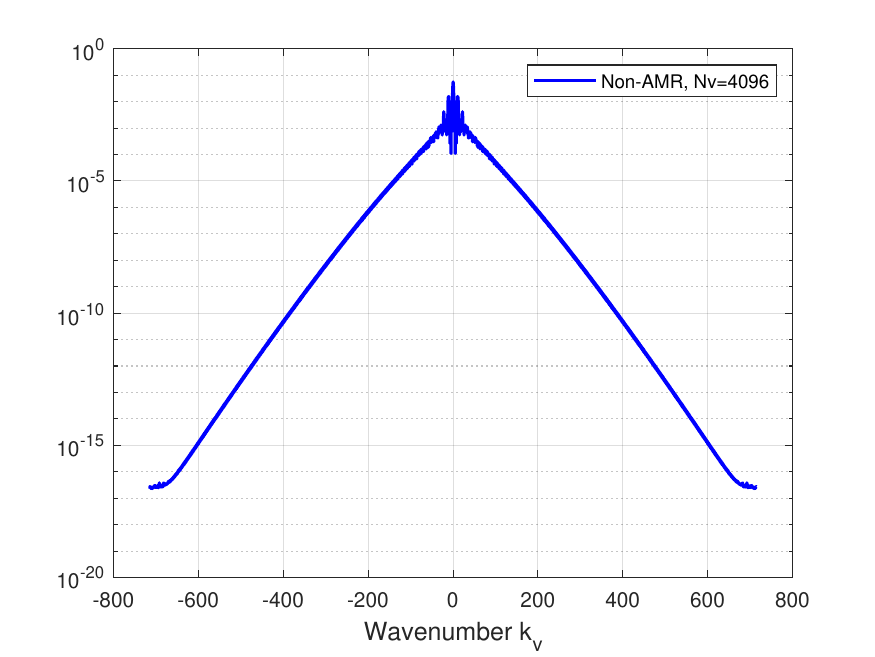}
		\subcaption{$t=20$} 
	\end{subfigure}
	\begin{subfigure}[b]{0.24\linewidth}
		\includegraphics[width=1\linewidth]{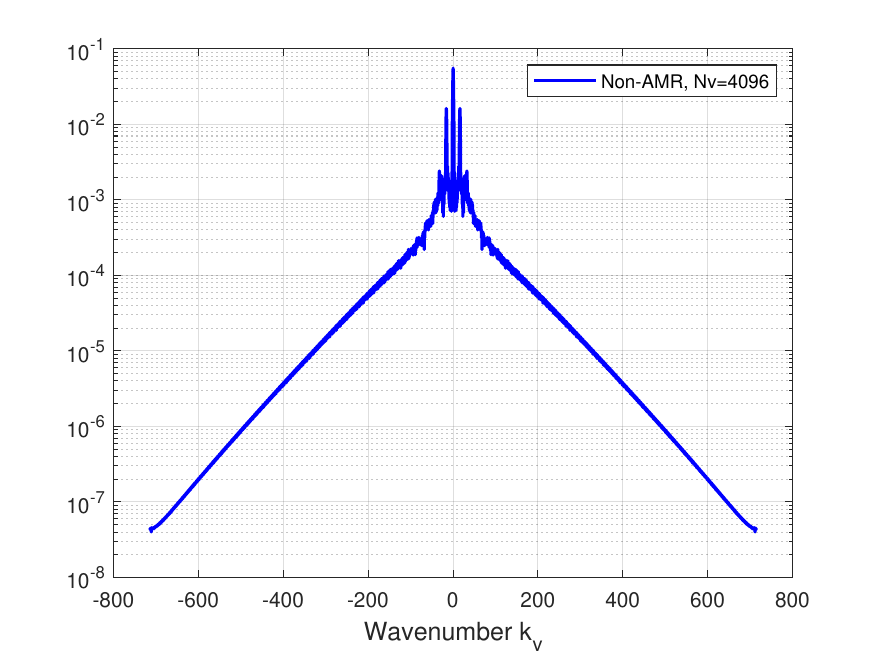}
		\subcaption{$t=30$} 
	\end{subfigure}
	\begin{subfigure}[b]{0.24\linewidth}		\includegraphics[width=1\linewidth]{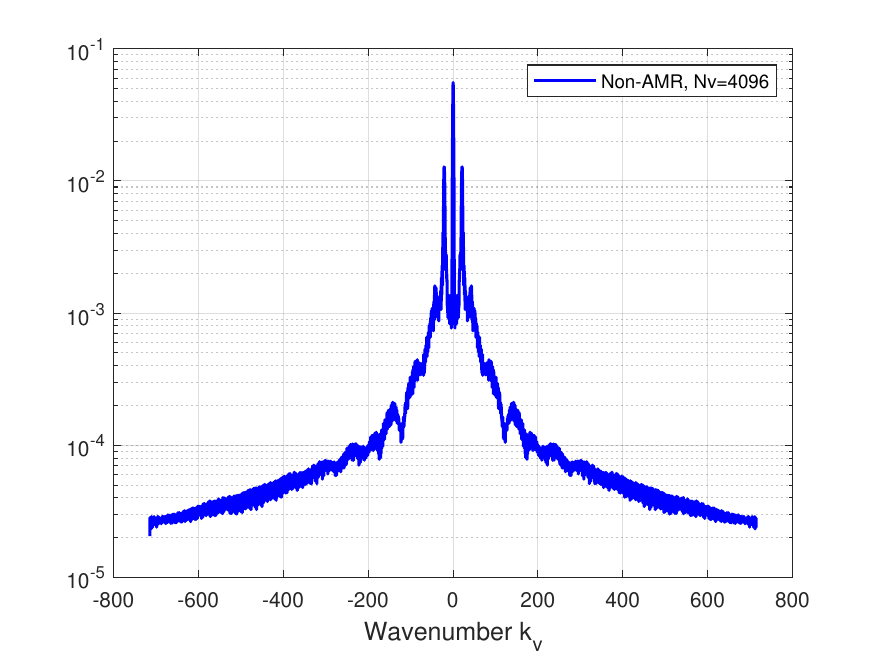}
		\subcaption{$t=40$} 
	\end{subfigure}		
	\begin{subfigure}[b]{0.24\linewidth}
		\includegraphics[width=1\linewidth]{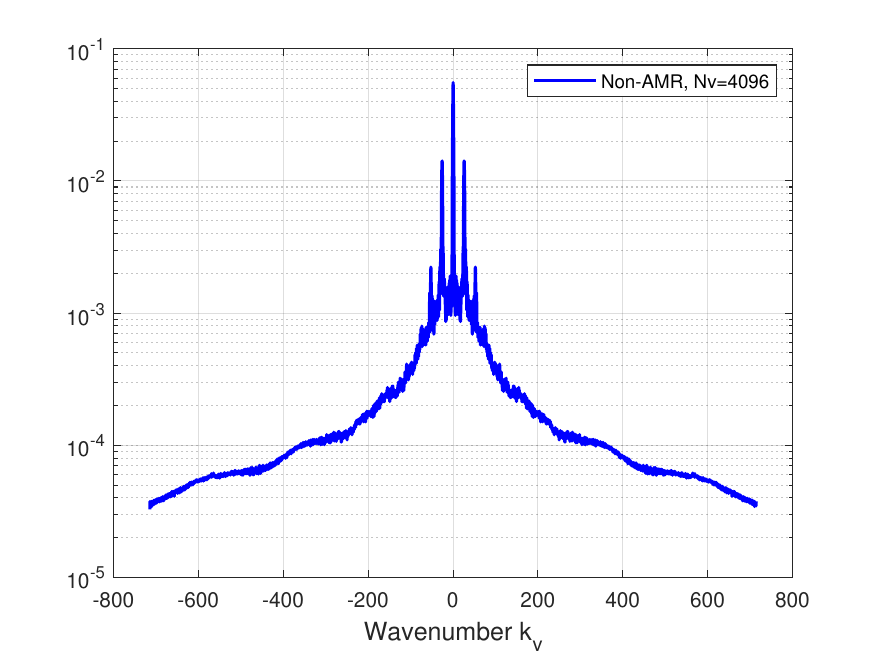}
		\subcaption{$t=50$} 
	\end{subfigure}		

    \begin{subfigure}[b]{0.24\linewidth}
		\includegraphics[width=1\linewidth]{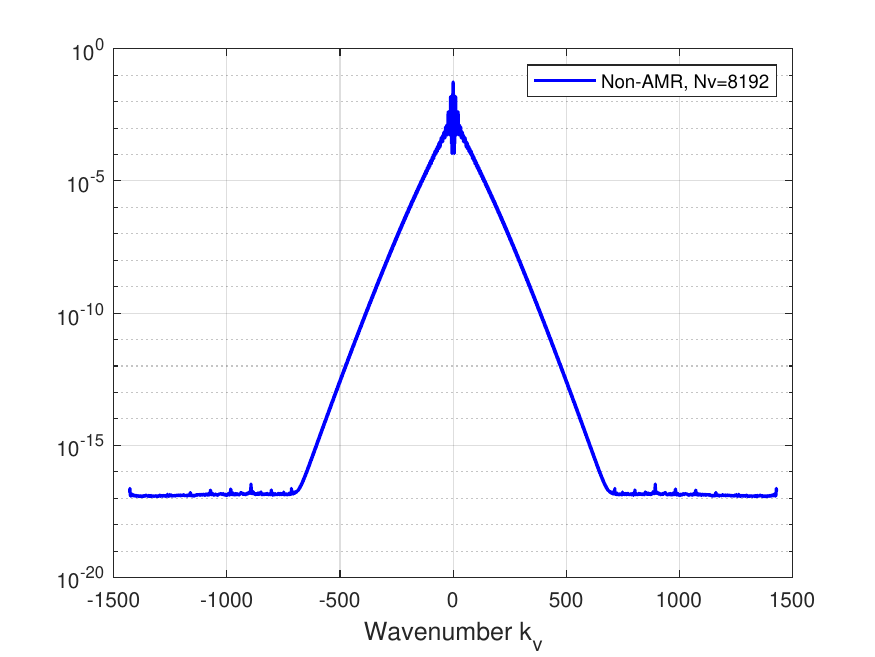}
		\subcaption{$t=20$} 
	\end{subfigure}
	\begin{subfigure}[b]{0.24\linewidth}
		\includegraphics[width=1\linewidth]{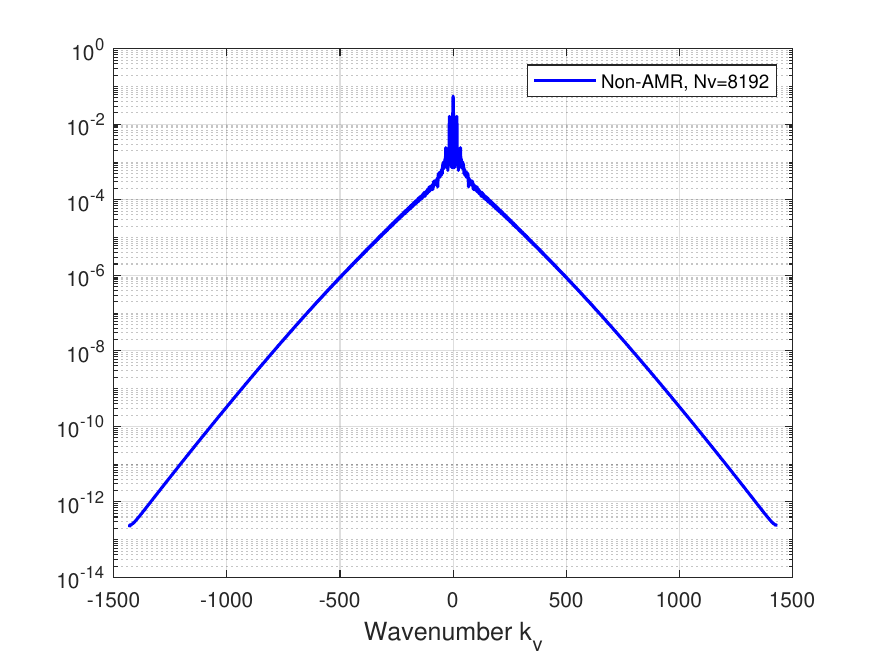}
		\subcaption{$t=30$} 
	\end{subfigure}
	\begin{subfigure}[b]{0.24\linewidth}		\includegraphics[width=1\linewidth]{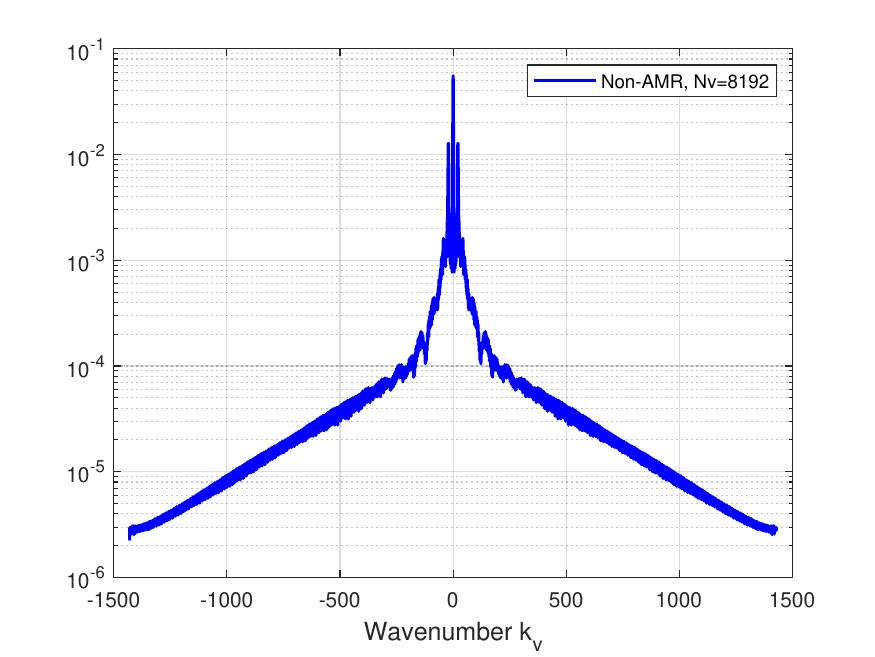}
		\subcaption{$t=40$} 
	\end{subfigure}		
	\begin{subfigure}[b]{0.24\linewidth}
		\includegraphics[width=1\linewidth]{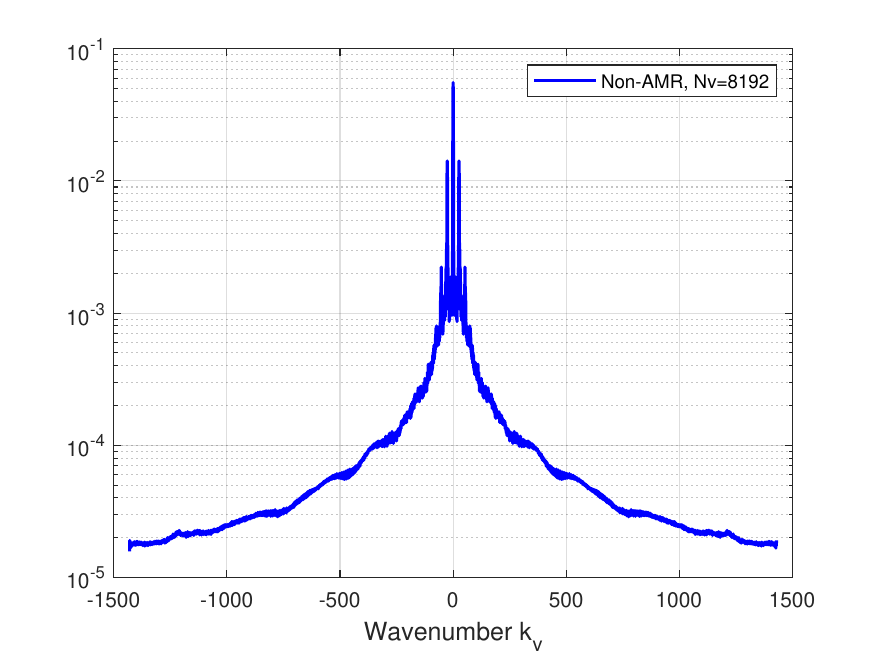}
		\subcaption{$t=50$} 
	\end{subfigure}		

    \begin{subfigure}[b]{0.24\linewidth}
		\includegraphics[width=1\linewidth]{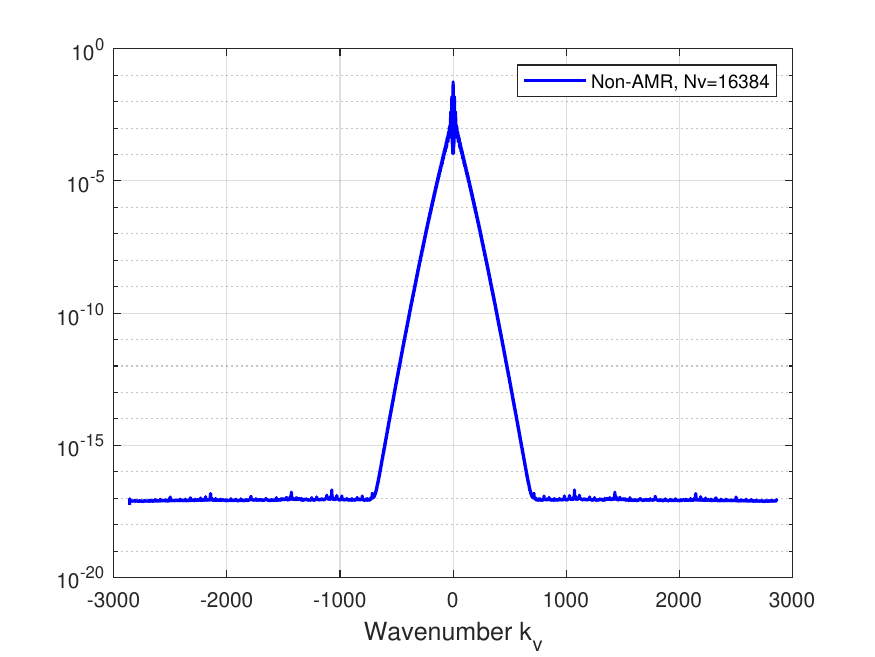}
		\subcaption{$t=20$} 
	\end{subfigure}
	\begin{subfigure}[b]{0.24\linewidth}
		\includegraphics[width=1\linewidth]{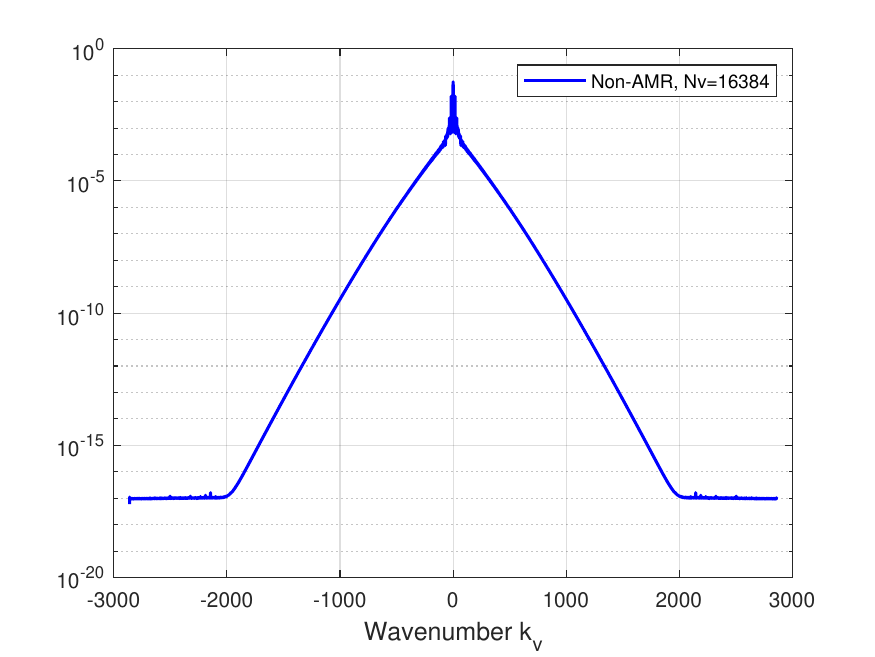}
		\subcaption{$t=30$} 
	\end{subfigure}
	\begin{subfigure}[b]{0.24\linewidth}		\includegraphics[width=1\linewidth]{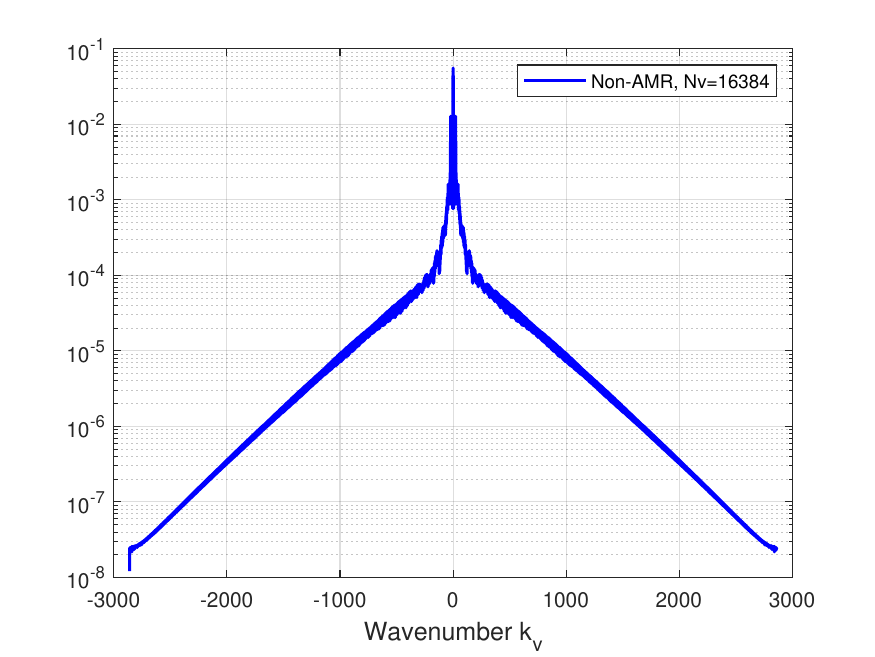}
		\subcaption{$t=40$} 
	\end{subfigure}		
	\begin{subfigure}[b]{0.24\linewidth}
		\includegraphics[width=1\linewidth]{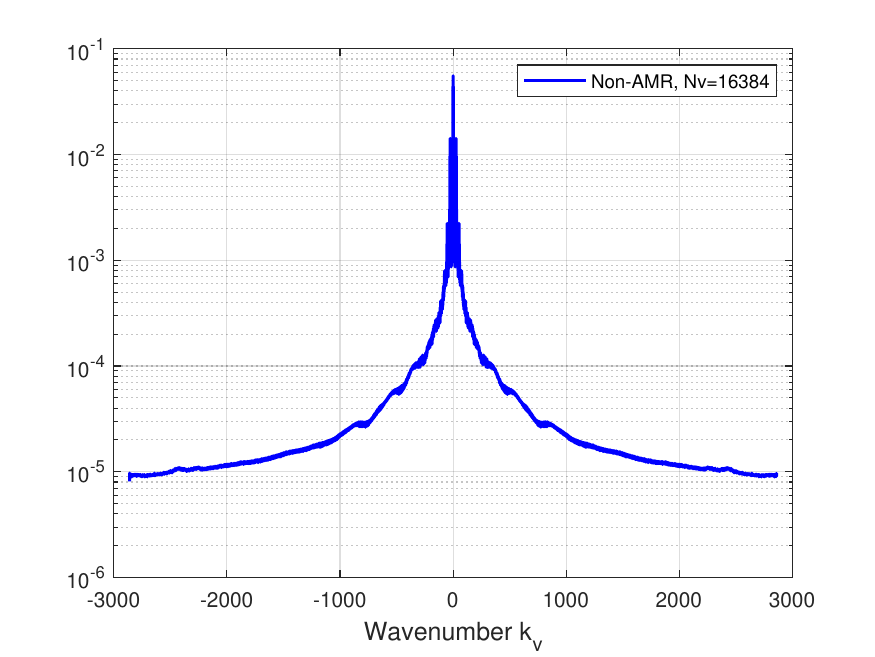}
		\subcaption{$t=50$} 
	\end{subfigure}

\begin{subfigure}[b]{0.24\linewidth}
		\includegraphics[width=1\linewidth]{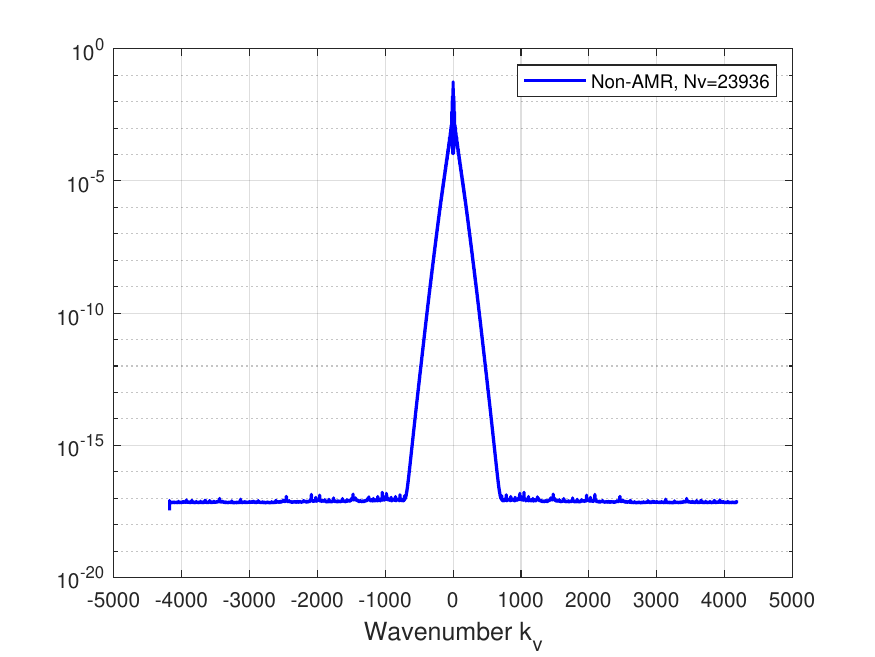}
		\subcaption{$t=20$} 
	\end{subfigure}
	\begin{subfigure}[b]{0.24\linewidth}
		\includegraphics[width=1\linewidth]{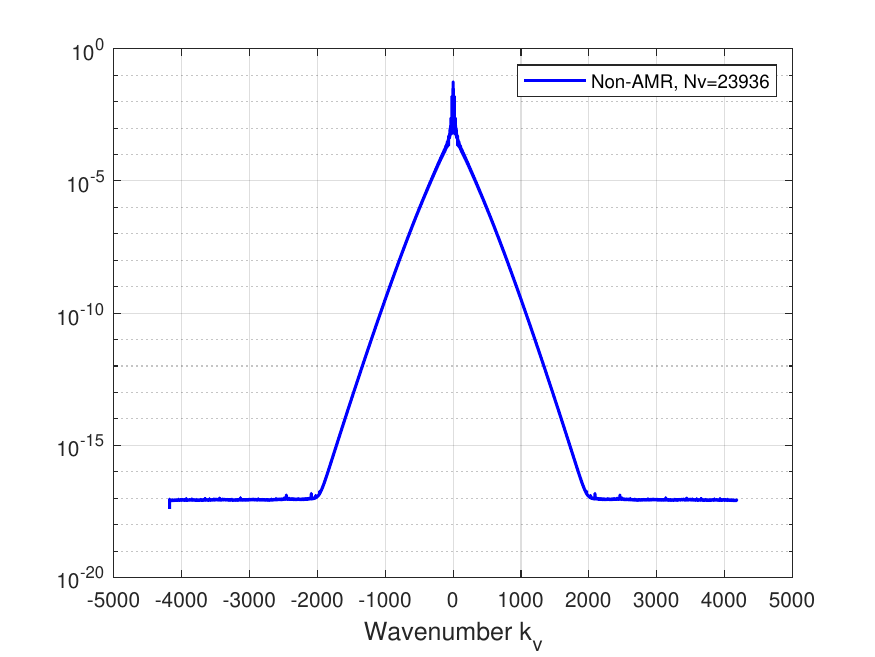}
		\subcaption{$t=30$} 
	\end{subfigure}
	\begin{subfigure}[b]{0.24\linewidth}		\includegraphics[width=1\linewidth]{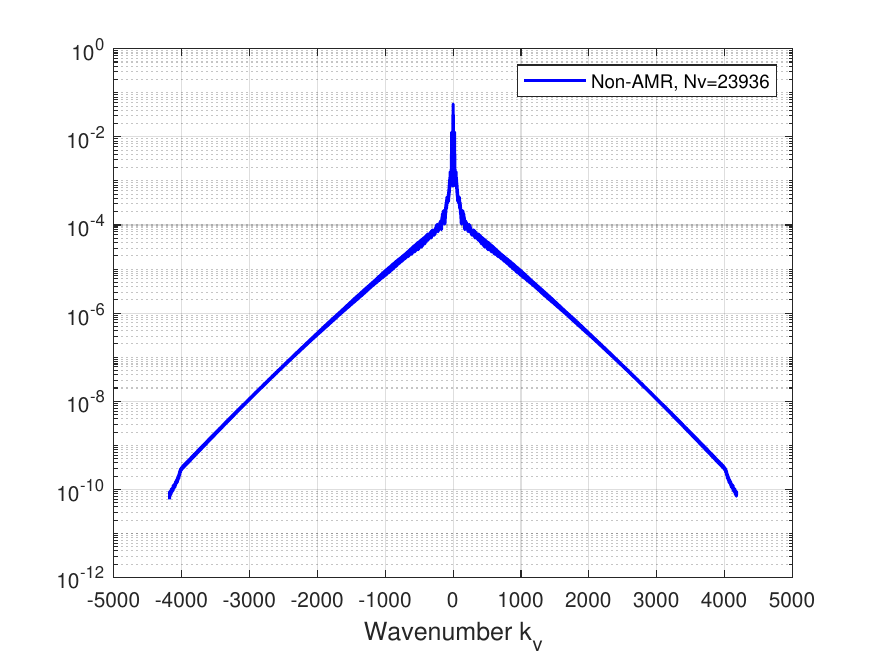}
		\subcaption{$t=40$} 
	\end{subfigure}		
	\begin{subfigure}[b]{0.24\linewidth}
		\includegraphics[width=1\linewidth]{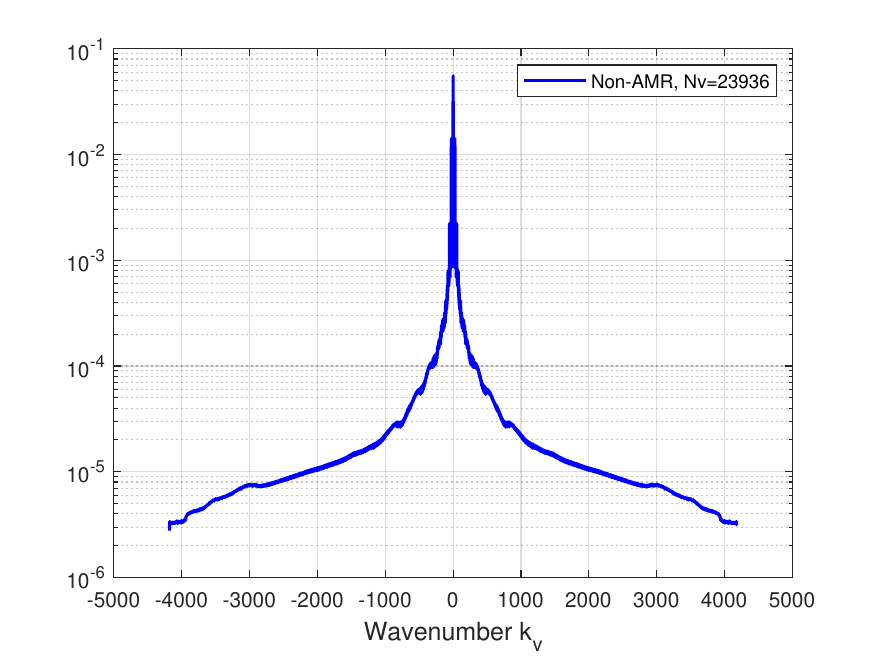}
		\subcaption{$t=50$} 
	\end{subfigure}		
     \begin{subfigure}[b]{0.24\linewidth}
		\includegraphics[width=1\linewidth]{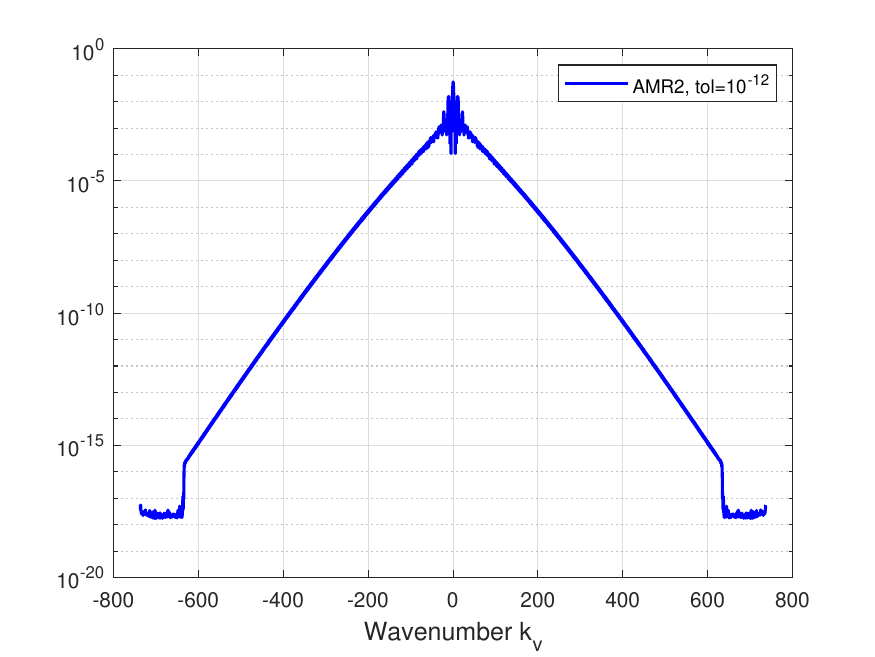}
		\subcaption{$t=20$} 
	\end{subfigure}
	\begin{subfigure}[b]{0.24\linewidth}
		\includegraphics[width=1\linewidth]{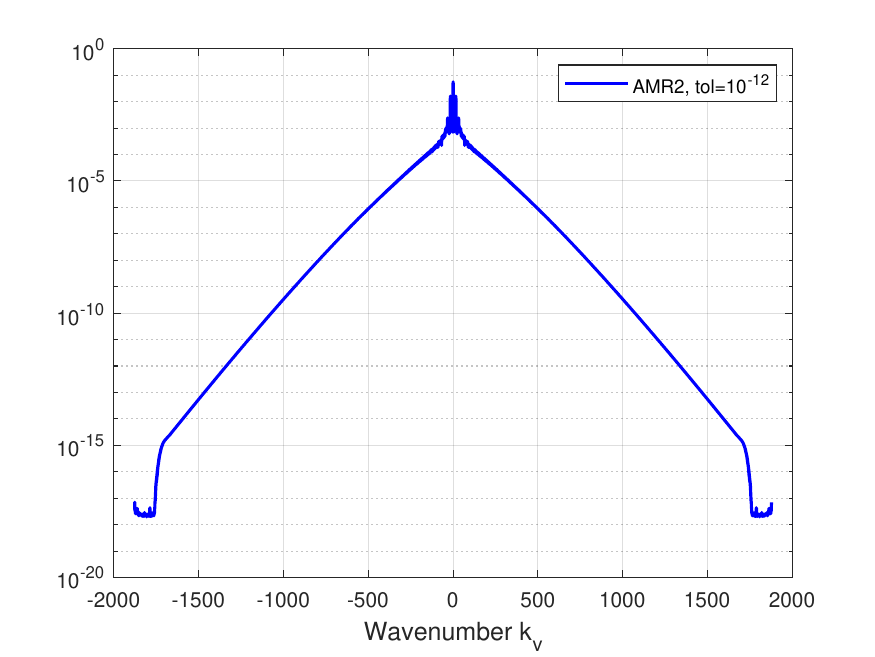}
		\subcaption{$t=30$} 
	\end{subfigure}
	\begin{subfigure}[b]{0.24\linewidth}		\includegraphics[width=1\linewidth]{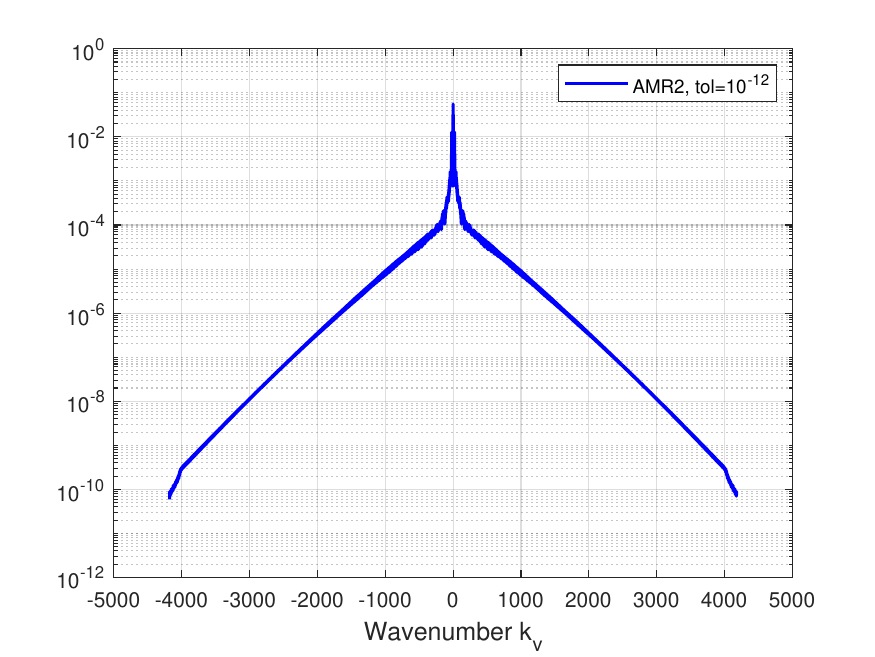}
		\subcaption{$t=40$} 
	\end{subfigure}		
	\begin{subfigure}[b]{0.24\linewidth}
		\includegraphics[width=1\linewidth]{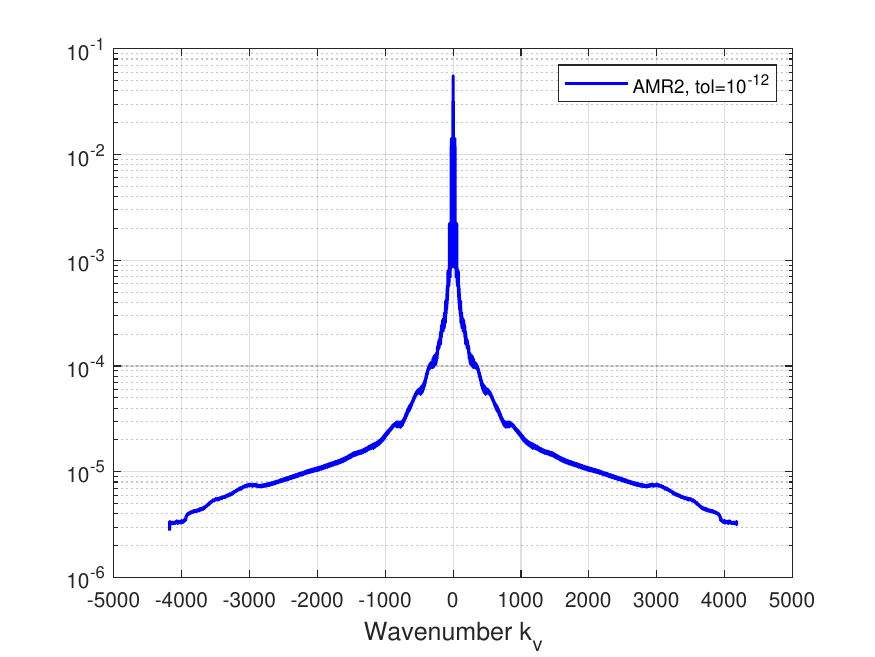}
		\subcaption{$t=50$} 
	\end{subfigure}	
    \caption{Strong Landau damping. Time evolution of the averaged Fourier mode magnitude $\frac{1}{N_x}\sum_l |\hat{f}(l,k_v)|$. Each row corresponds to Non-AMR with $N_x=2816$ and various $N_v=4096, 8192, 16384, 23936$. The last row reports the result for AMR2 with $\tau_{tol}=10^{-12}$).}\label{strong Fourier mode}
    
\end{figure}

					\begin{remark}
						It is worth noting two important implications regarding the hardware memory limit. First, hitting this adaptive limit serves as a natural diagnostic indicator; it explicitly informs us of the exact moment when the numerical solution begins to lose its reliability. Second, under strong nonlinear effects, any simulation will inevitably exhaust its memory capacity at a certain point due to the exponentially growing grid demand. Determining which numerical scheme performs better in the severely unresolved regime beyond this limit remains an open question, which falls outside the scope of this paper.
					\end{remark}

					\subsection{Two-stream Instability}
					We consider the benchmark problem of the two-stream instability. The initial distribution function is given by
					\begin{equation*}
						f_0(x, v) = \frac{2}{7\sqrt{2\pi}} (1+5v^2) \left( 1 + \alpha \left[ \frac{\cos(2kx) + \cos(3kx)}{1.2} + \cos(kx) \right] \right) \exp\left(-\frac{v^2}{2}\right),
					\end{equation*} 
					where $\alpha=0.01$ and $k=0.5$. The computational domain is $[0,4\pi]\times[-10,10]$.

					In Figure \ref{Two a}, the time evolution of the electric field in the $L^2$-norm is depicted. Driven by the multiple spatial modes ($k, 2k, 3k$), the electric field experiences exponential growth in the linear regime followed by a highly nonlinear saturation phase. Figures \ref{Two b}-\ref{Two f} illustrate the relative errors of the conserved quantities. Consistent with previous observations, the simulations dynamically resolve the complex physics until reaching the hardware memory limit at $t_m \approx 27.25$. Beyond this time, we observe an inevitable loss of conservation in most physical quantities due to unresolved high-frequency modes. In contrast, the discrete total mass remains exactly conserved up to machine precision, which confirms the structural robustness of the proposed method. 
                    The grid adaptation procedure and the associated computational time in Figure \ref{Two N} support the accuracy and efficiency of the method. The relevant phase-space dynamics with vortex formation and rapid generation of fine-scale filaments are depicted in Figure \ref{twostream phase}. To sum up, the AMR models allow us to obtain high accurate solution with significantly less average memory and computational time than a globally refined fixed grid.

						\begin{figure}[h]
						\centering
						\begin{subfigure}[b]{0.4\linewidth}
							\includegraphics[width=1\linewidth]{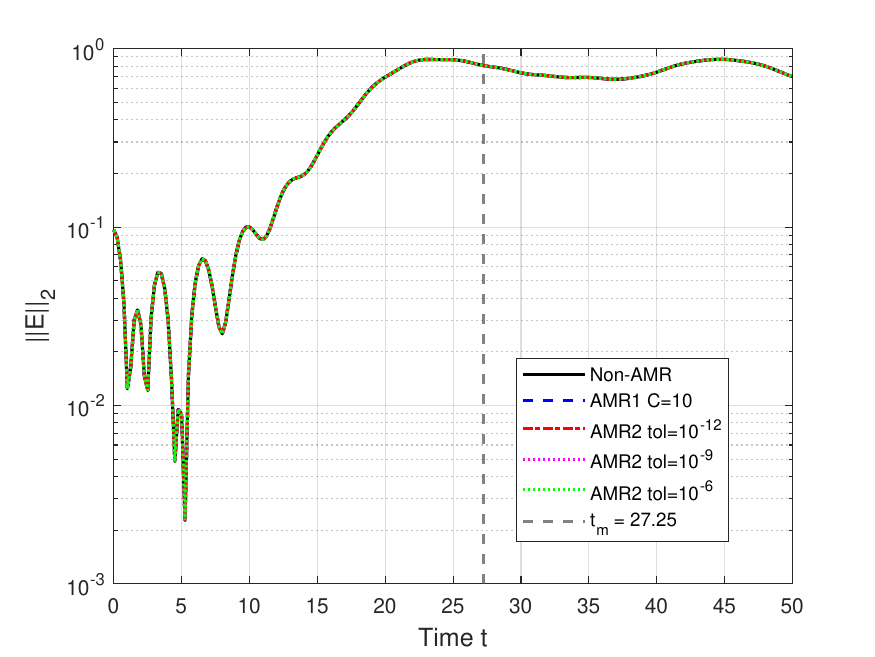}
							\subcaption{$L^2$-norm of $E$\\ \hspace{3mm}} \label{Two a}
						\end{subfigure}	
					\begin{subfigure}[b]{0.4\linewidth}
						\includegraphics[width=1\linewidth]{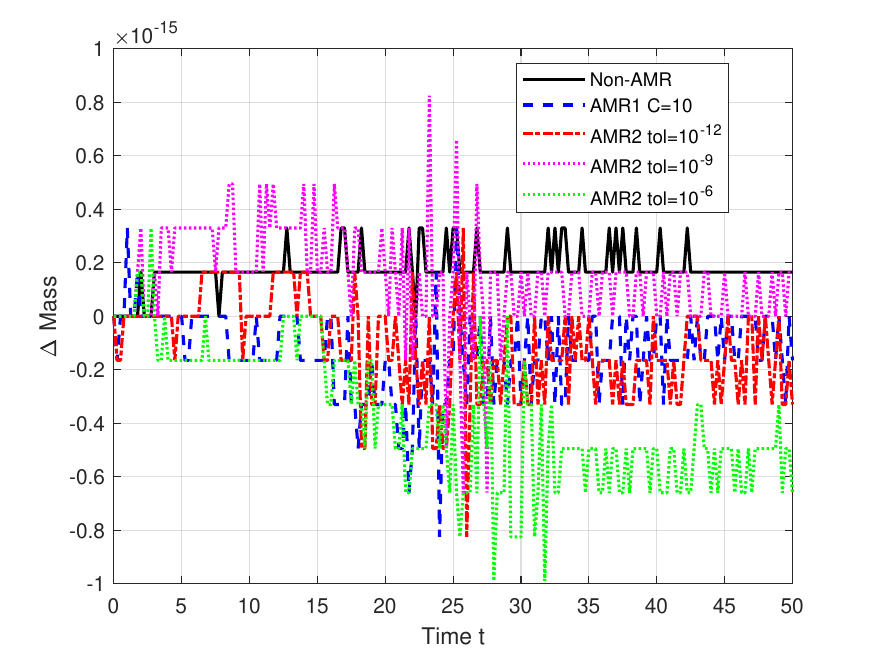}
						\subcaption{$\displaystyle \frac{\int f dvdx - \int f_0 dvdx}{\int f_0 dvdx}$}\label{Two b}
					\end{subfigure}	
					\begin{subfigure}[b]{0.4\linewidth}
					\includegraphics[width=1\linewidth]{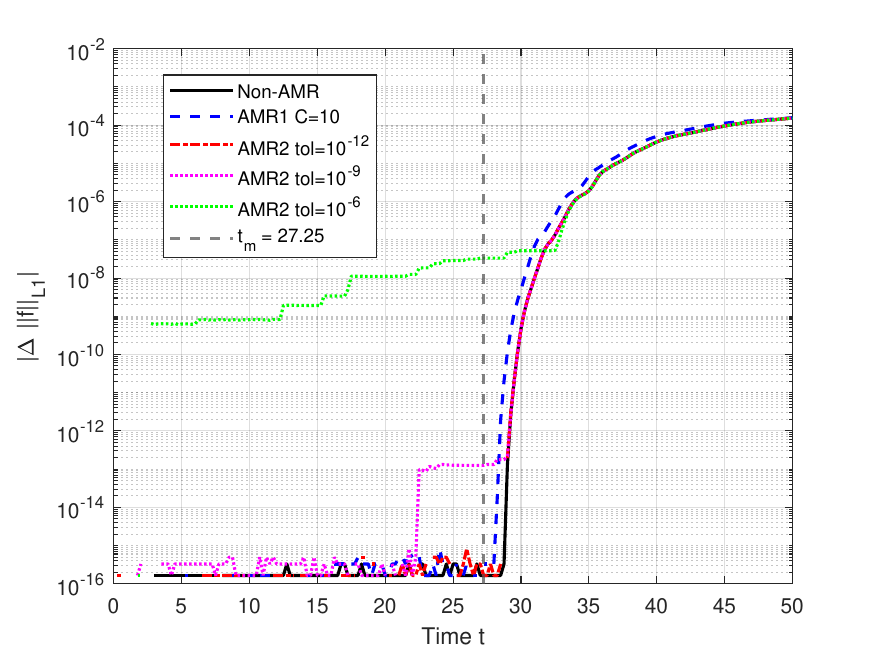}
					\subcaption{ $\displaystyle\frac{\|f^n\|_1-\|f^0\|_1}{\|f^0\|_1}$}\label{Two c}
				\end{subfigure}
						\begin{subfigure}[b]{0.4\linewidth}
							\includegraphics[width=1\linewidth]{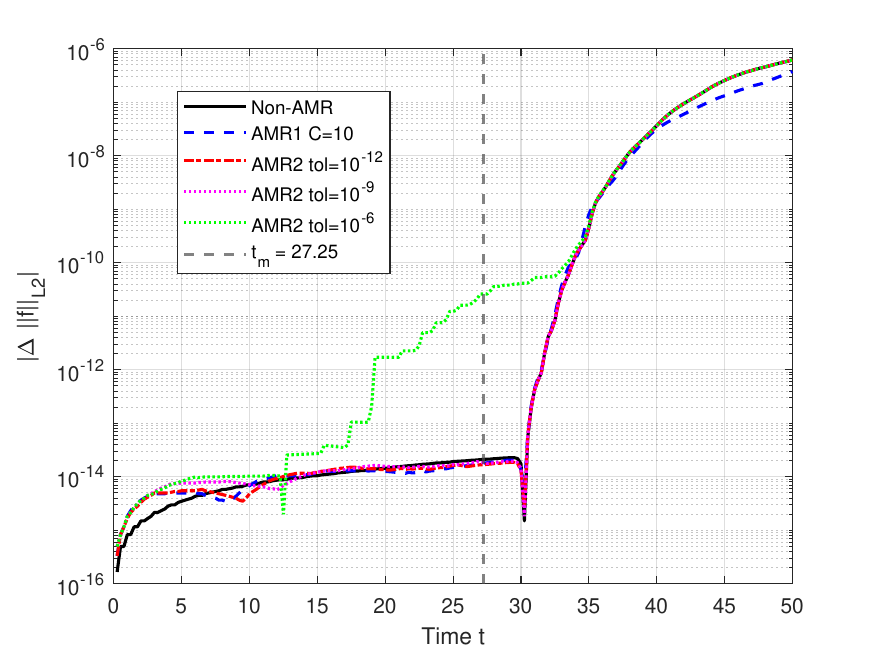}
							\subcaption{$\displaystyle\frac{\|f^n\|_2-\|f^0\|_2}{\|f^0\|_2}$}\label{Two d}
						\end{subfigure}	
						\begin{subfigure}[b]{0.4\linewidth}
						\includegraphics[width=1\linewidth]{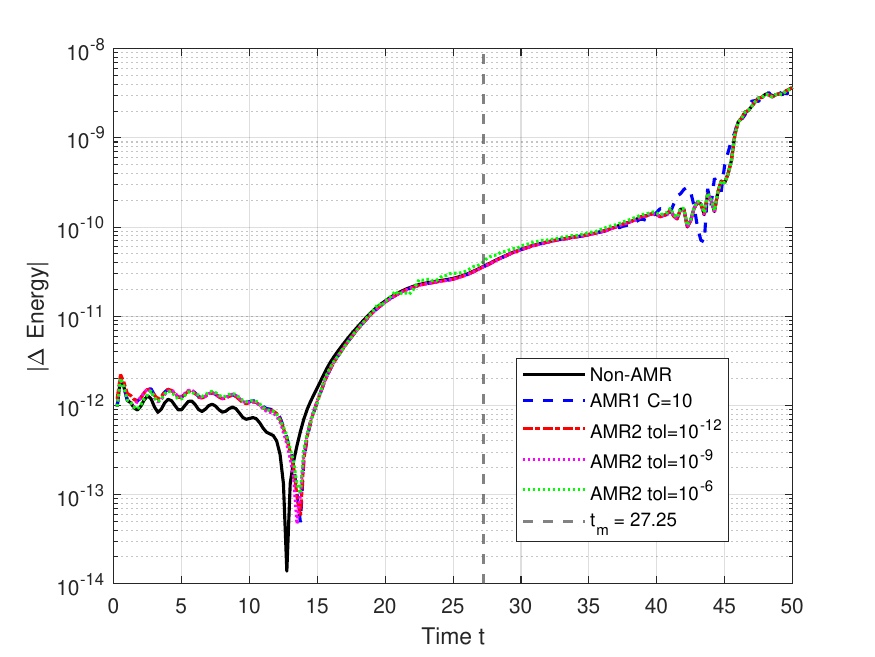}
						\subcaption{$\displaystyle \frac{Energy(t)-Energy(0)}{Energy(0)}$}\label{Two e}
					\end{subfigure}	
						\begin{subfigure}[b]{0.4\linewidth}
							\includegraphics[width=1\linewidth]{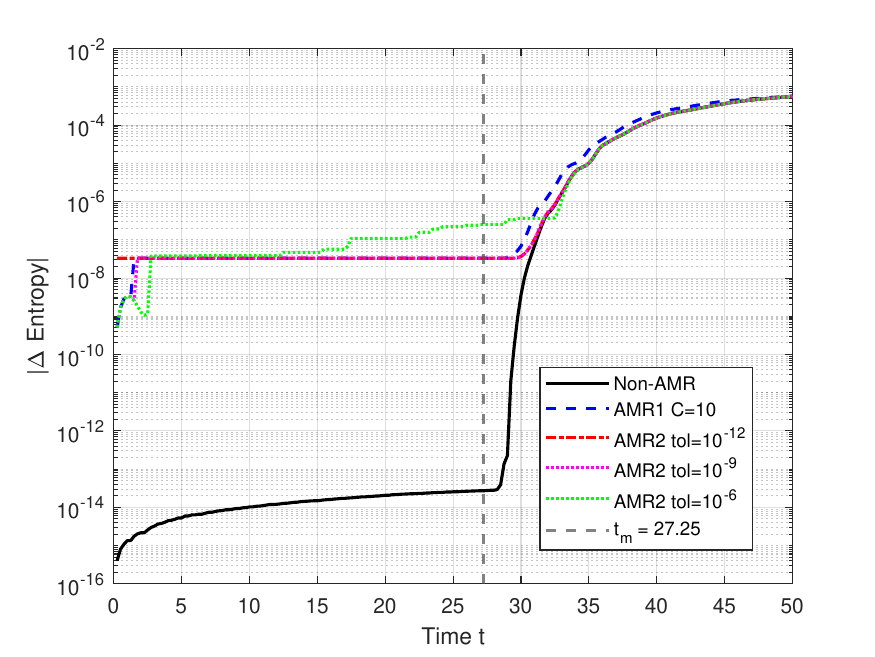}
							\subcaption{$\displaystyle \frac{Entropy(t)-Entropy(0)}{Entropy(0)}$}\label{Two f}
						\end{subfigure}			
						\caption{Two-stream Instability.}\label{Fig twostream 1}
					\end{figure}
					
					\begin{figure}[h]
						\centering
						\begin{subfigure}[b]{0.4\linewidth}
							\includegraphics[width=1\linewidth]{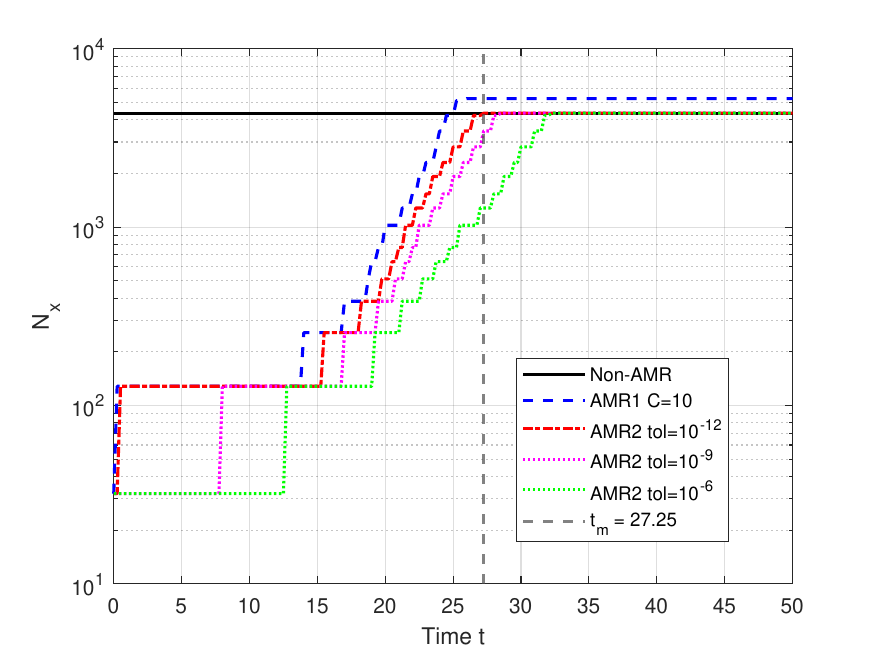}
							\subcaption{$N_x$} 
						\end{subfigure}	
						\begin{subfigure}[b]{0.4\linewidth}
							\includegraphics[width=1\linewidth]{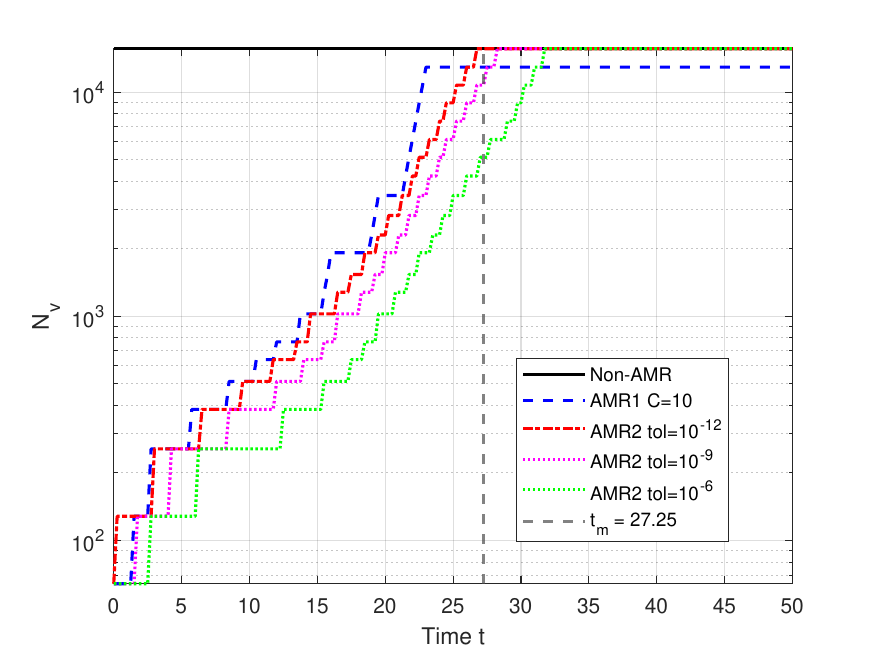}
							\subcaption{$N_v$} 
						\end{subfigure}	
						\begin{subfigure}[b]{0.4\linewidth}
							\includegraphics[width=1\linewidth]{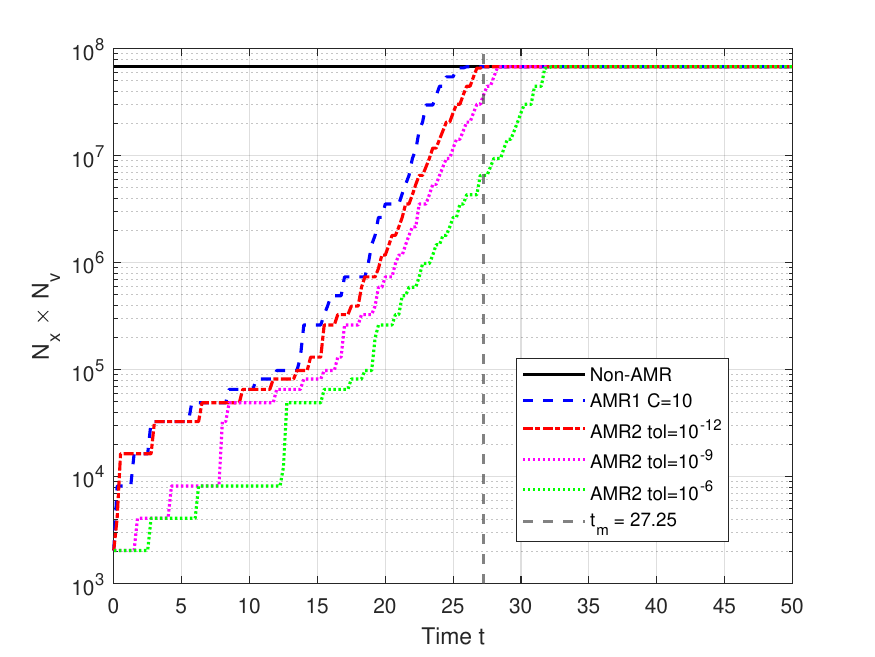}
							\subcaption{$N_x \times N_v$} 
						\end{subfigure}	
                        \begin{subfigure}[b]{0.4\linewidth}
					\includegraphics[width=1\linewidth]{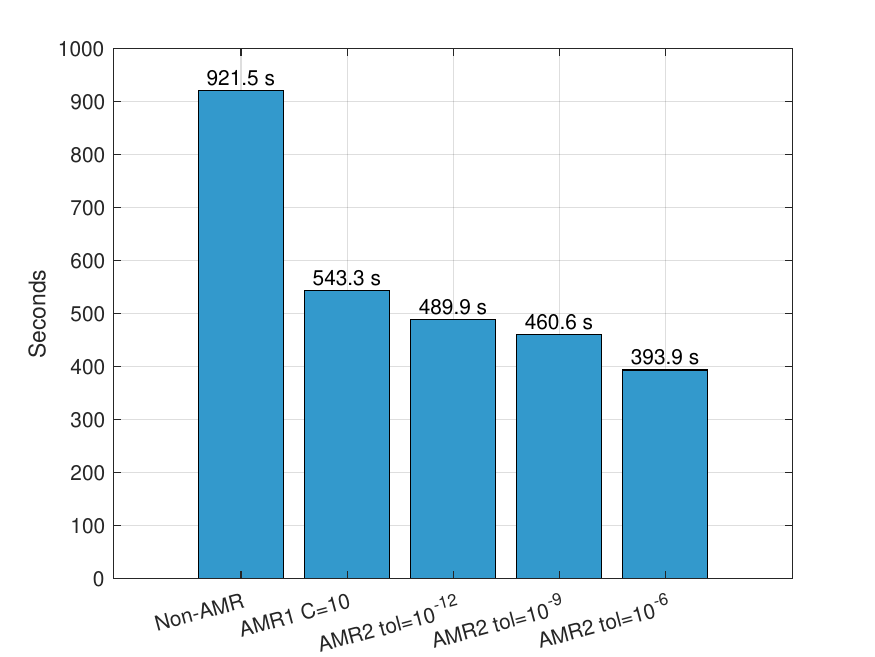}
					\subcaption{GPU time}\label{Two cpu} 
				\end{subfigure}			
						\caption{Two-stream Instability. Time evolution of grid numbers and GPU time.}\label{Two N}
					\end{figure}

			\begin{figure}[h]
				\centering
				\begin{subfigure}[b]{0.32\linewidth}
					\includegraphics[width=1\linewidth]{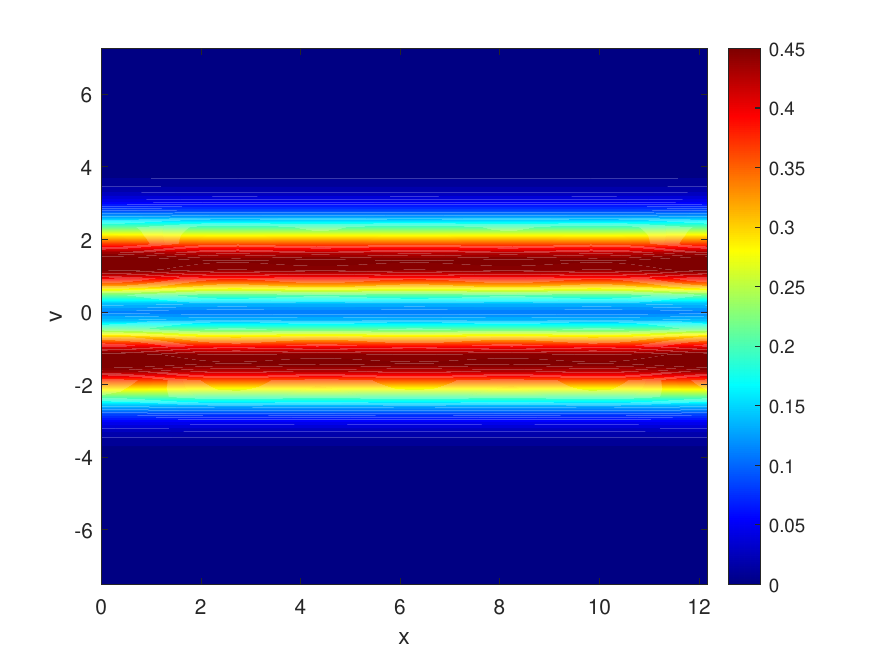}
					\subcaption{$t=0$} 
				\end{subfigure}
				\begin{subfigure}[b]{0.32\linewidth}
					\includegraphics[width=1\linewidth]{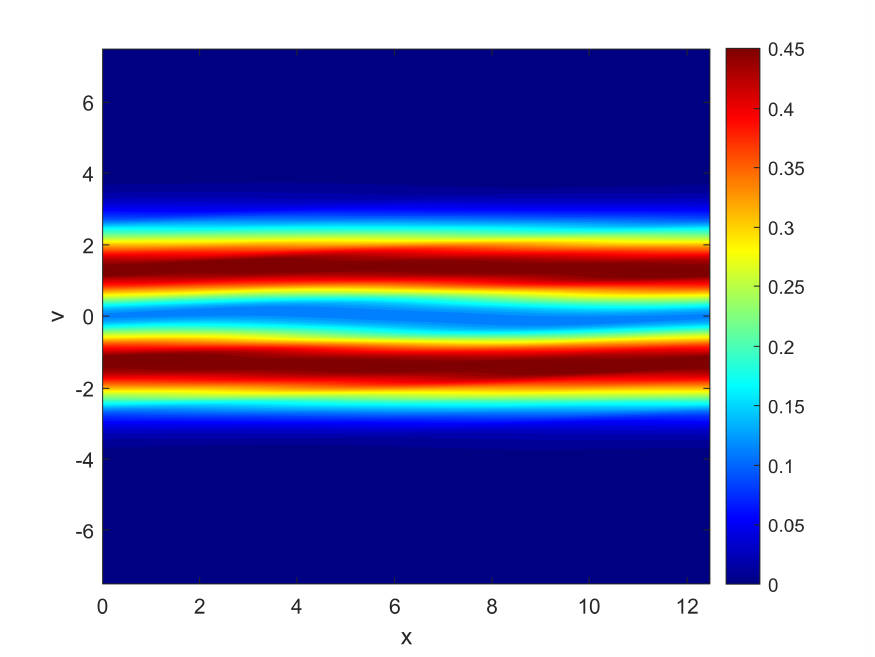}
					\subcaption{$t=10$} 
				\end{subfigure}
				\begin{subfigure}[b]{0.32\linewidth}
					\includegraphics[width=1\linewidth]{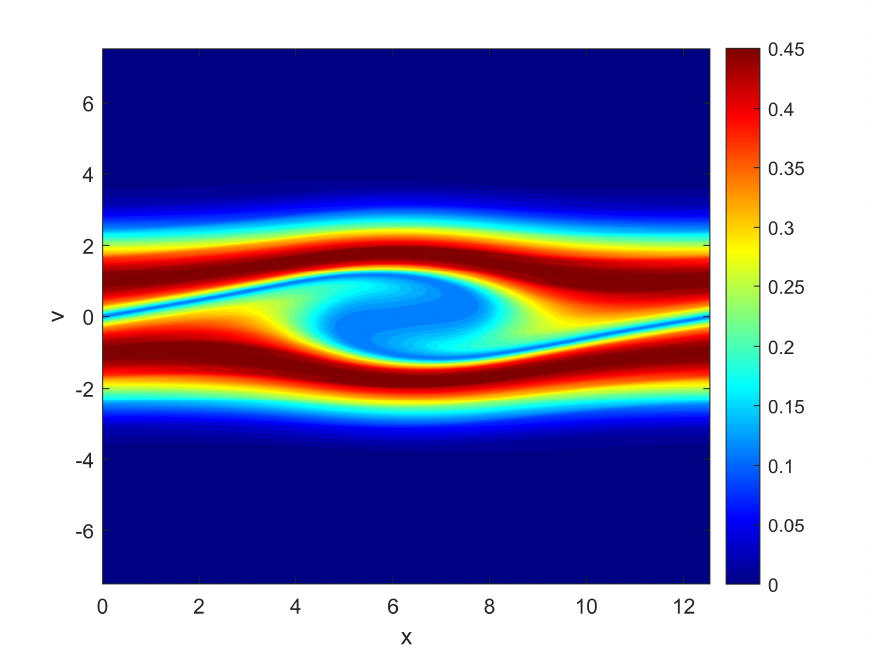}
					\subcaption{$t=20$} 
				\end{subfigure}
				\begin{subfigure}[b]{0.32\linewidth}
					\includegraphics[width=1\linewidth]{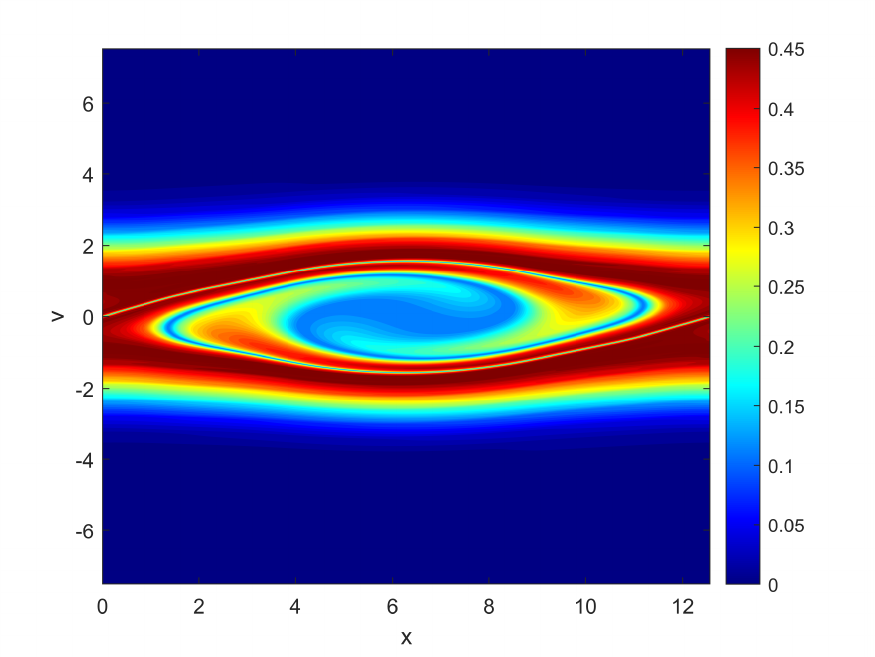}
					\subcaption{$t=30$} 
				\end{subfigure}
				\begin{subfigure}[b]{0.32\linewidth}		\includegraphics[width=1\linewidth]{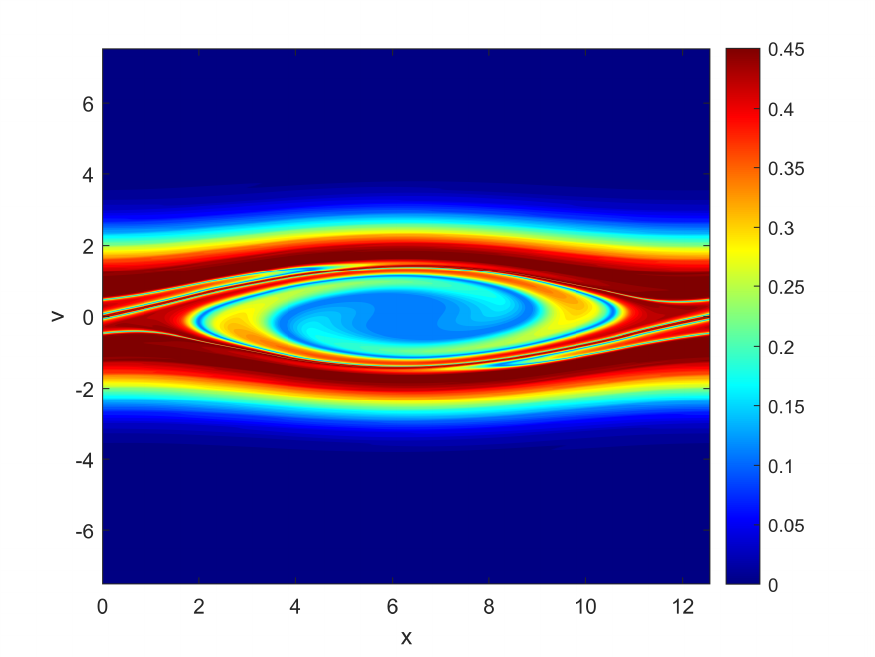}
					\subcaption{$t=40$} 
				\end{subfigure}		
				\begin{subfigure}[b]{0.32\linewidth}
					\includegraphics[width=1\linewidth]{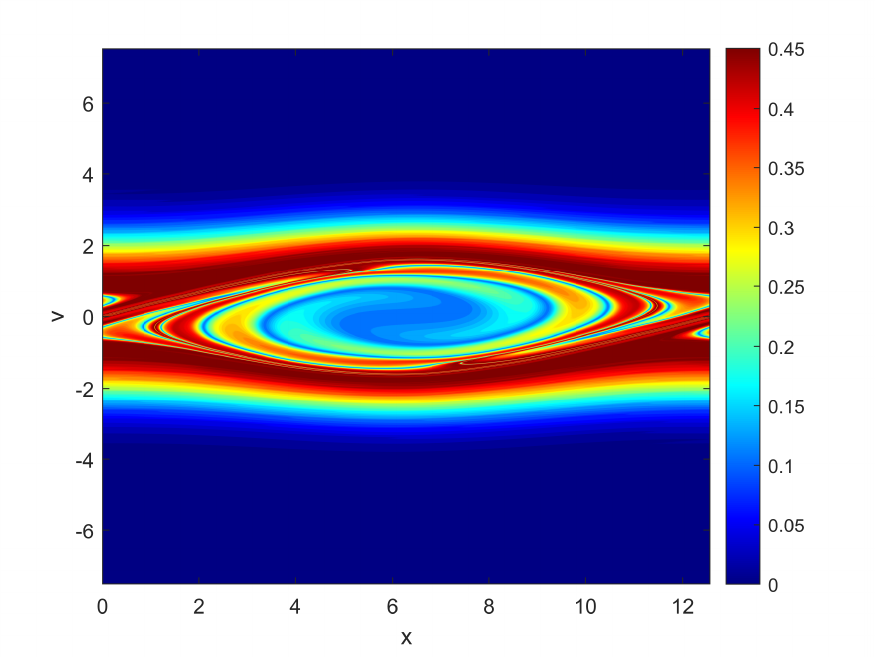}
					\subcaption{$t=50$} 
				\end{subfigure}				
				\caption{Two-stream Instability. Phase space profile of $f$.}\label{twostream phase}
			\end{figure}

                    \subsection{Symmetric Two-stream Instability}
We next consider the symmetric two-stream instability problem, where the initial distribution function is given by a perturbation of two equilibria:
\begin{equation}
	f_0(x, v) = \frac{1}{\sqrt{8\pi}v_{th}} \left( f_M(v-u; v_{th}) + f_M(v+u; v_{th}) \right) (1 + \alpha \cos(kx)),
\end{equation}
where $\alpha=0.05$, $k=\frac{2}{13}$, $v_{th}=0.3$, and $u=0.99$. To ensure spatial periodicity, the spatial domain is set to $[0,13\pi]$, and the velocity domain is truncated to $[-9,9]$.					
					
The time evolution of the $L^2$-norm of the electric field is depicted in Figure \ref{STwo a}. The system initially undergoes exponential growth driven by the linear instability. Figures \ref{STwo b}-\ref{STwo f} reveal that the macroscopic invariants, including the $L^2$-norm and total energy, are well preserved up to $t_m \approx 30$. Beyond this critical threshold, however, the grid resolution becomes insufficient to capture the increasingly fine structures of the filaments, causing most of these quantities to experience a noticeable loss of accuracy. In contrast, the discrete total mass remains invariant up to machine precision throughout the entire simulation.
					
Nevertheless, within the dynamically resolved window ($t<t_m$), Figure \ref{STwo N} shows that employing the AMR methods with an appropriate tolerance allows us to resolve the complex phase-space dynamics at only a fraction of the computational cost required by a globally refined fixed grid.
					
						\begin{figure}[h]
						\centering
						\begin{subfigure}[b]{0.4\linewidth}
							\includegraphics[width=1\linewidth]{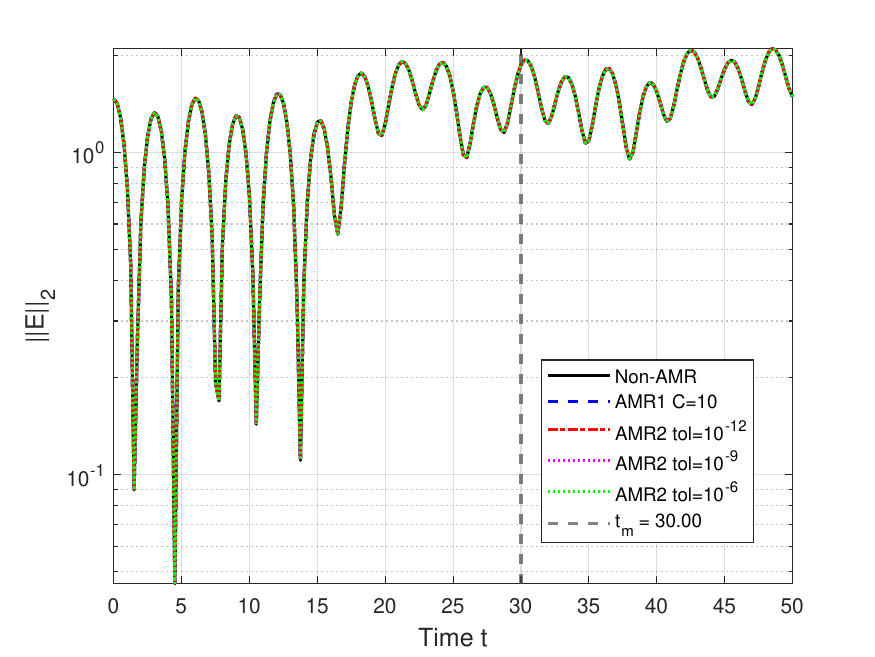}
							\subcaption{$L^2$-norm of $E$\\ \hspace{3mm}} \label{STwo a}
						\end{subfigure}	
						\begin{subfigure}[b]{0.4\linewidth}
							\includegraphics[width=1\linewidth]{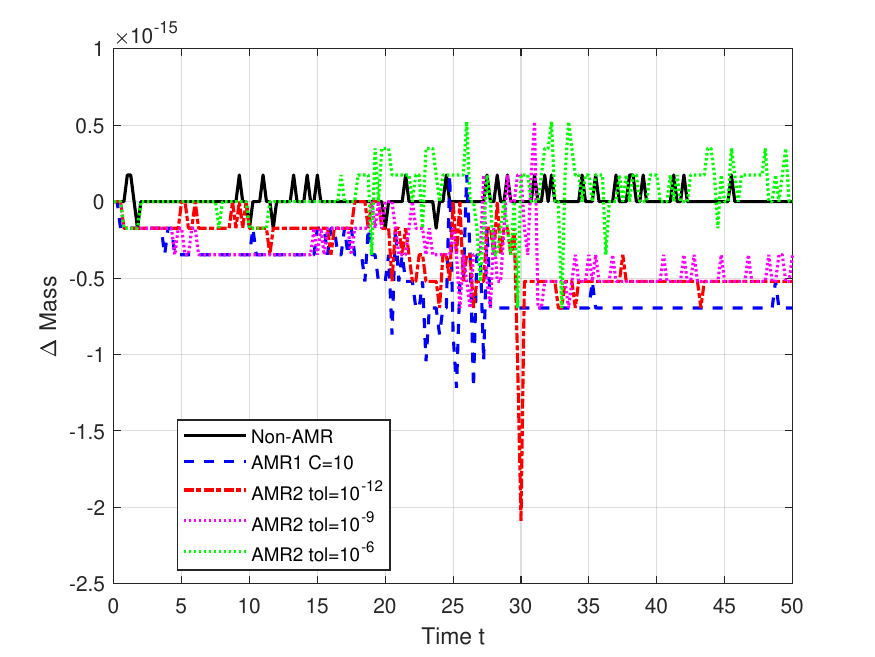}
							\subcaption{$\displaystyle \frac{\int f dvdx - \int f_0 dvdx}{\int f_0 dvdx}$}\label{STwo b}
						\end{subfigure}					
						\begin{subfigure}[b]{0.4\linewidth}
							\includegraphics[width=1\linewidth]{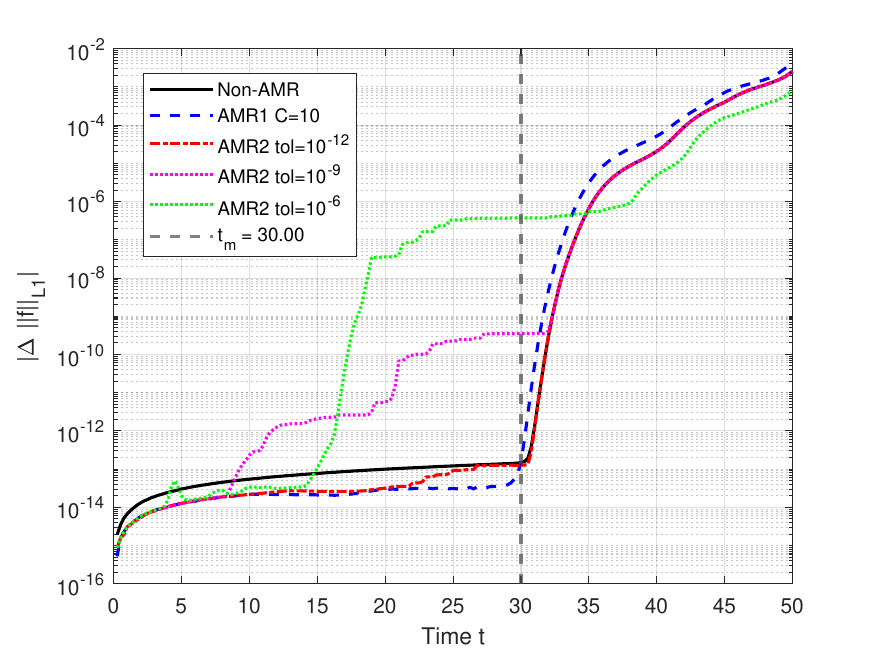}
							\subcaption{ $\displaystyle\frac{\|f^n\|_1-\|f^0\|_1}{\|f^0\|_1}$}\label{STwo c}
						\end{subfigure}	
						\begin{subfigure}[b]{0.4\linewidth}
							\includegraphics[width=1\linewidth]{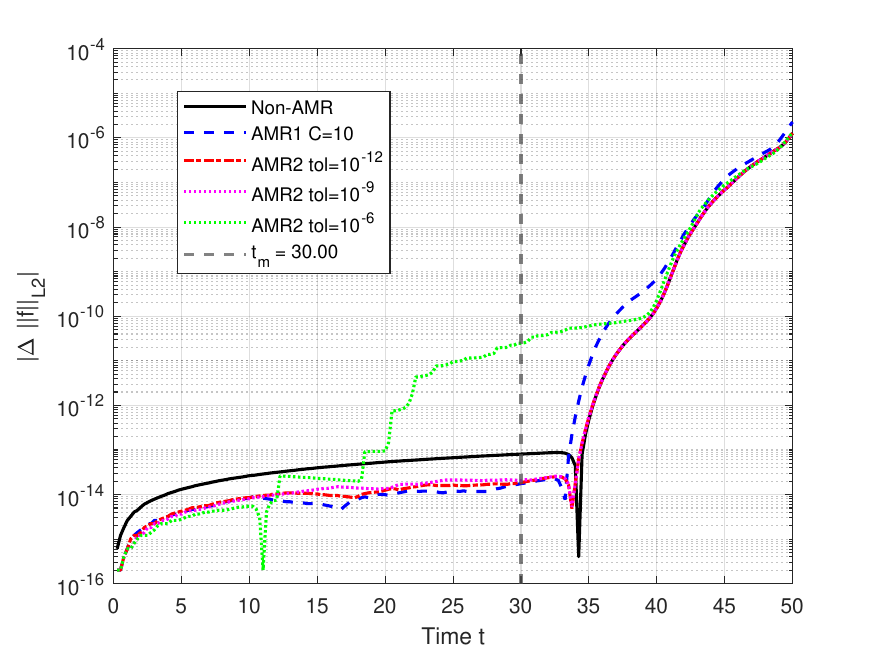}
							\subcaption{$\displaystyle\frac{\|f^n\|_2-\|f^0\|_2}{\|f^0\|_2}$}\label{STwo d}
						\end{subfigure}	
						
						\begin{subfigure}[b]{0.4\linewidth}
							\includegraphics[width=1\linewidth]{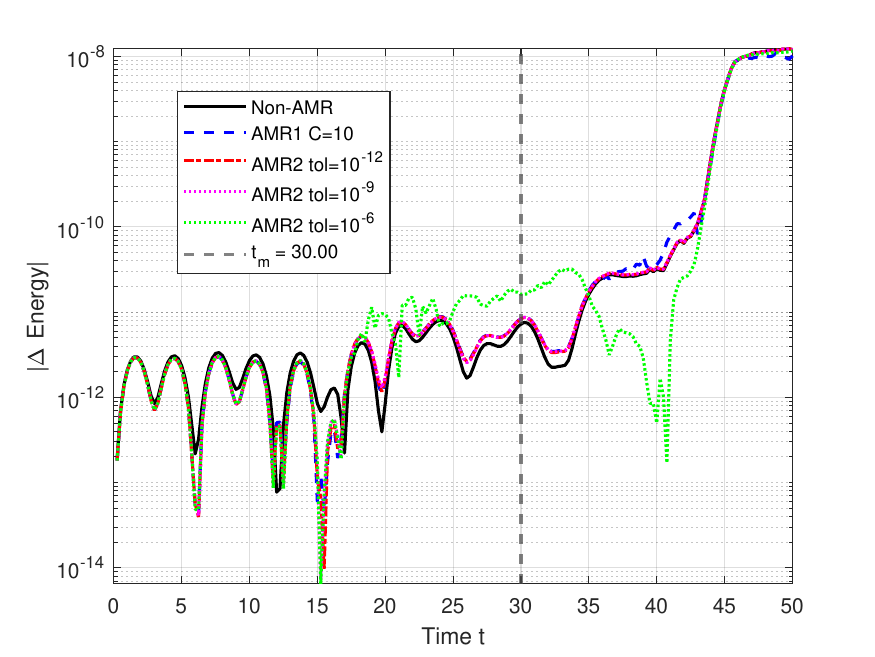}
							\subcaption{$\displaystyle \frac{Energy(t)-Energy(0)}{Energy(0)}$}\label{STwo e}
						\end{subfigure}	
						\begin{subfigure}[b]{0.4\linewidth}
							\includegraphics[width=1\linewidth]{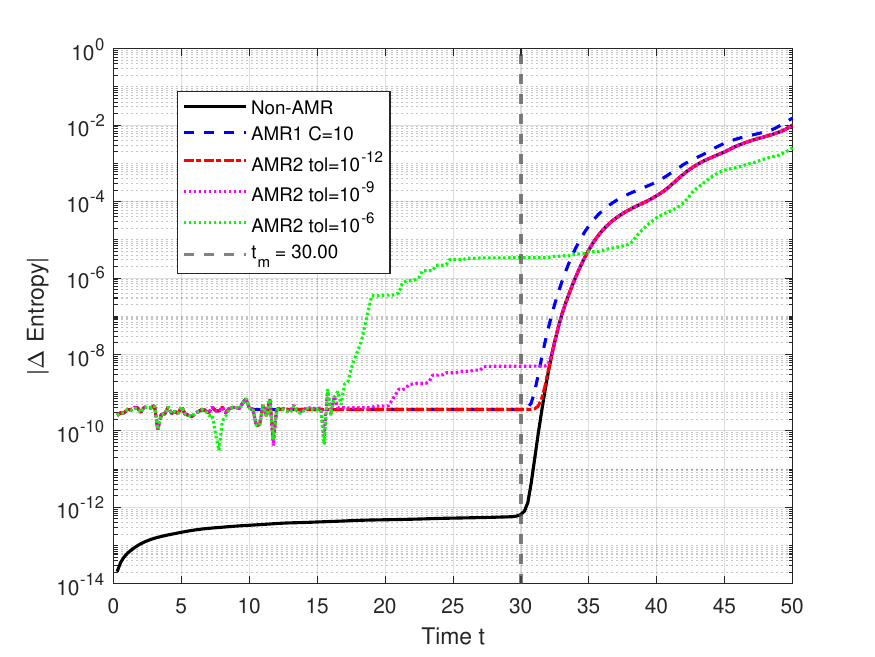}
							\subcaption{$\displaystyle \frac{Entropy(t)-Entropy(0)}{Entropy(0)}$}\label{STwo f}
						\end{subfigure}		
						\caption{Symmetric two-stream Instability. }\label{Fig symtwostream 1}
					\end{figure}
					
					\begin{figure}[h]
						\centering
						\begin{subfigure}[b]{0.4\linewidth}
							\includegraphics[width=1\linewidth]{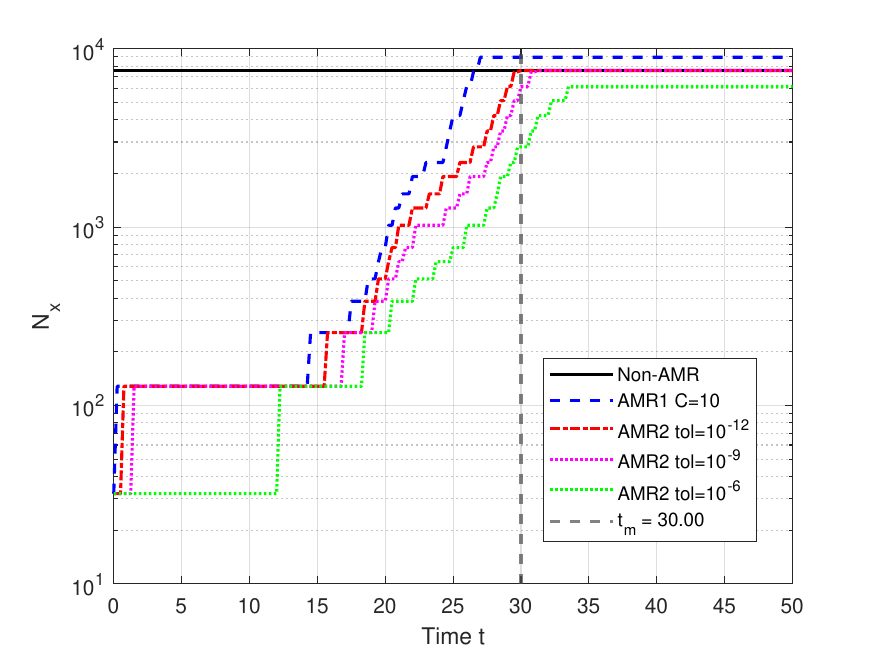}
							\subcaption{$N_x$} 
						\end{subfigure}	
						\begin{subfigure}[b]{0.4\linewidth}
							\includegraphics[width=1\linewidth]{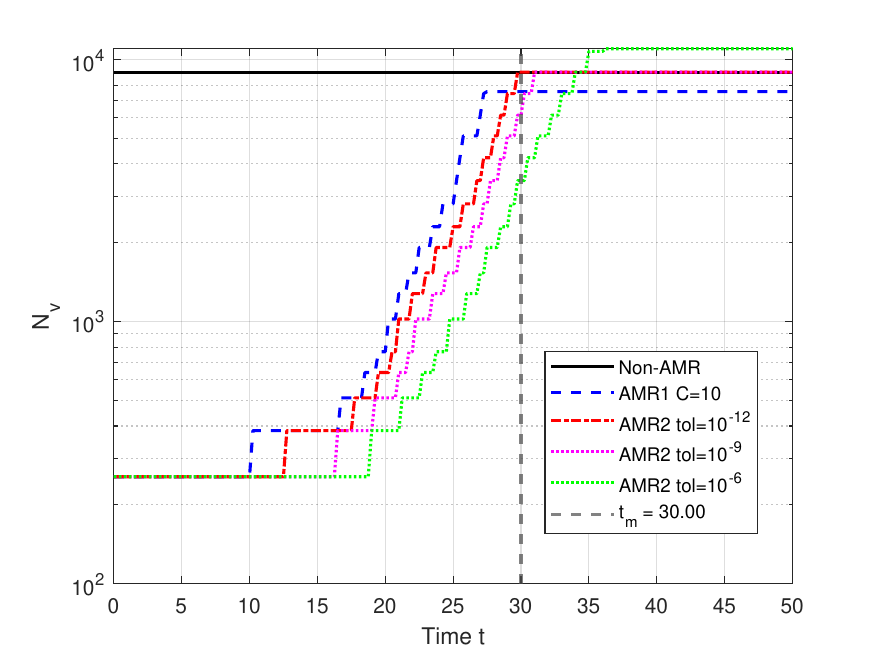}
							\subcaption{$N_v$} 
						\end{subfigure}	
						\begin{subfigure}[b]{0.4\linewidth}
							\includegraphics[width=1\linewidth]{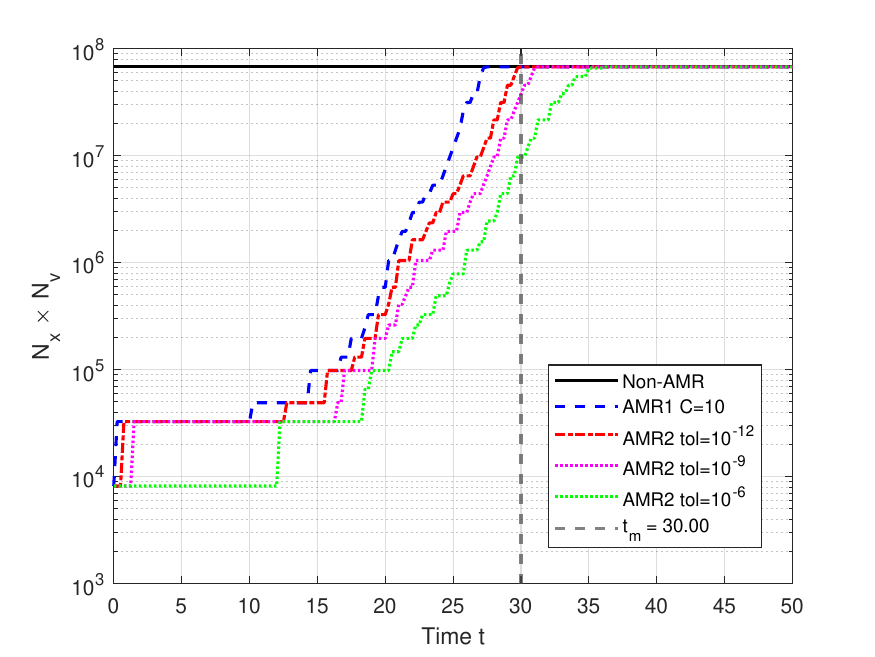}
							\subcaption{$N_x \times N_v$} 
						\end{subfigure}	
                        \begin{subfigure}[b]{0.4\linewidth}
							\includegraphics[width=1\linewidth]{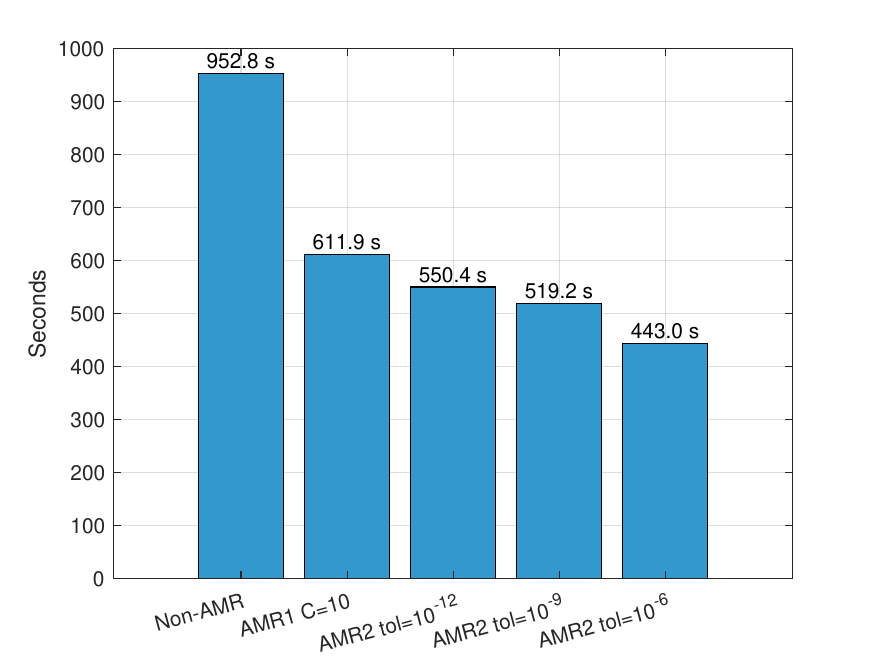}				\subcaption{GPU time} 
						\end{subfigure}			
						\caption{Symmetric two-stream Instability. Time evolution of grid numbers and GPU time.}\label{STwo N}
					\end{figure}

					Figure \ref{symtwostream phase} displays the corresponding time evolution of the phase space profiles, clearly illustrating the formation and evolution of the highly symmetric vortex structures.

						\begin{figure}[h]
							\centering
							\begin{subfigure}[b]{0.32\linewidth}
								\includegraphics[width=1\linewidth]{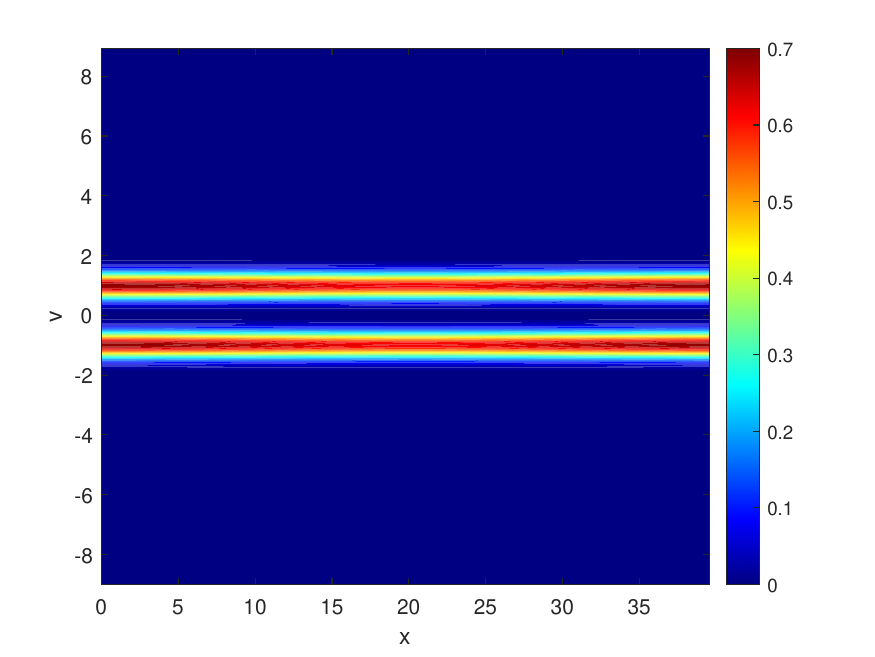}
								\subcaption{$t=0$} 
							\end{subfigure}
							\begin{subfigure}[b]{0.32\linewidth}
								\includegraphics[width=1\linewidth]{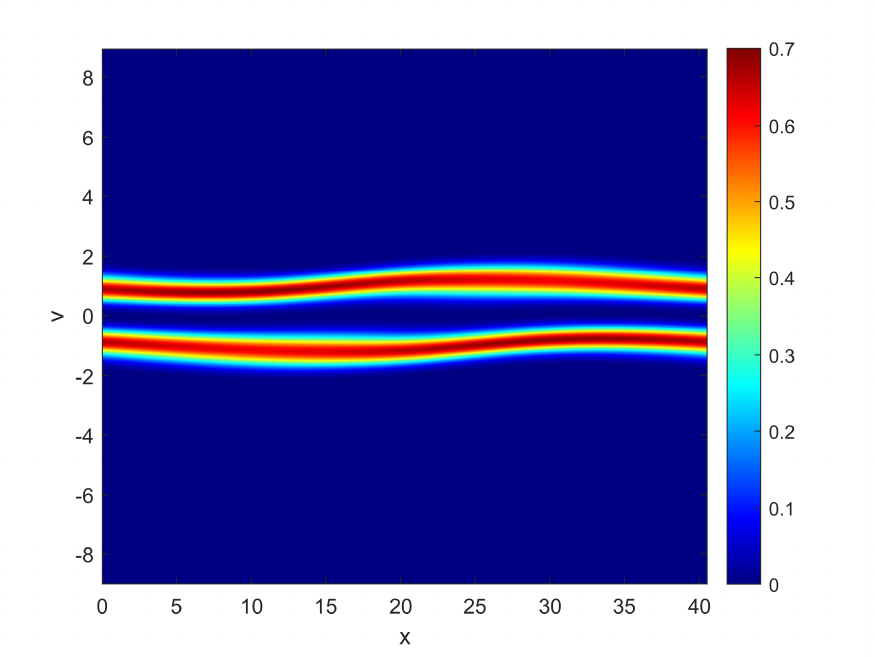}
								\subcaption{$t=10$} 
							\end{subfigure}
							\begin{subfigure}[b]{0.32\linewidth}
								\includegraphics[width=1\linewidth]{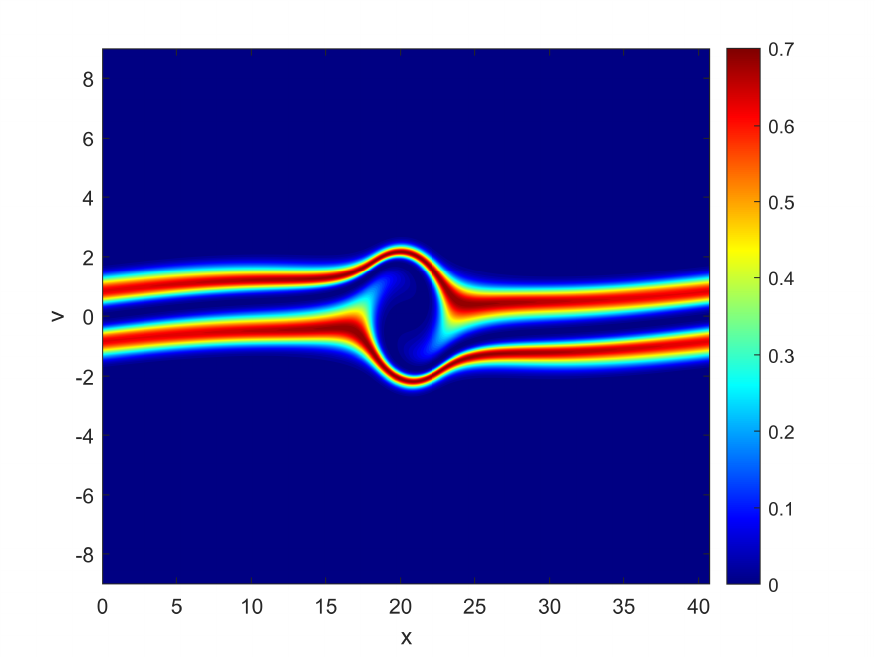}
								\subcaption{$t=20$} 
							\end{subfigure}
							\begin{subfigure}[b]{0.32\linewidth}
								\includegraphics[width=1\linewidth]{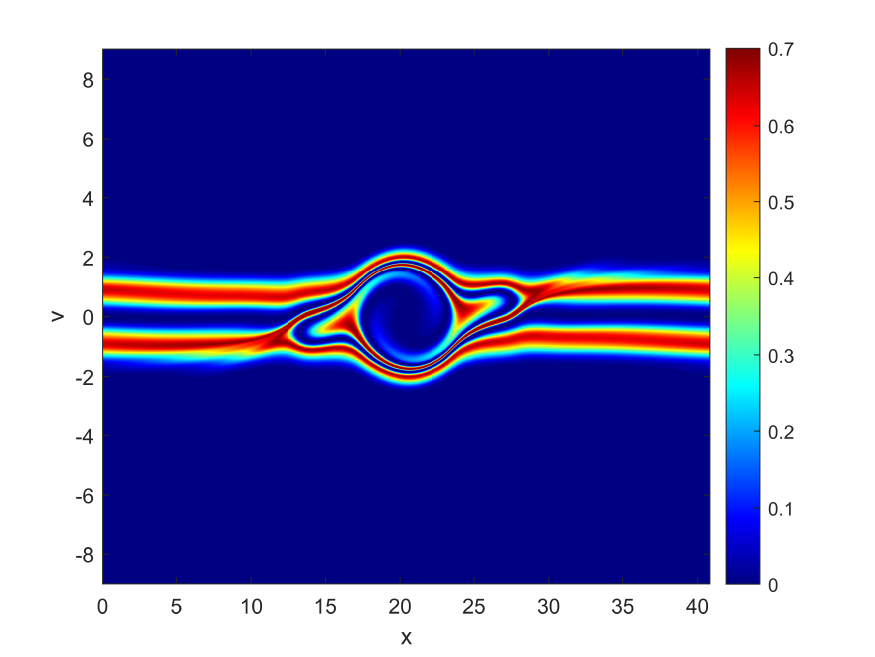}
								\subcaption{$t=30$} 
							\end{subfigure}
							\begin{subfigure}[b]{0.32\linewidth}		\includegraphics[width=1\linewidth]{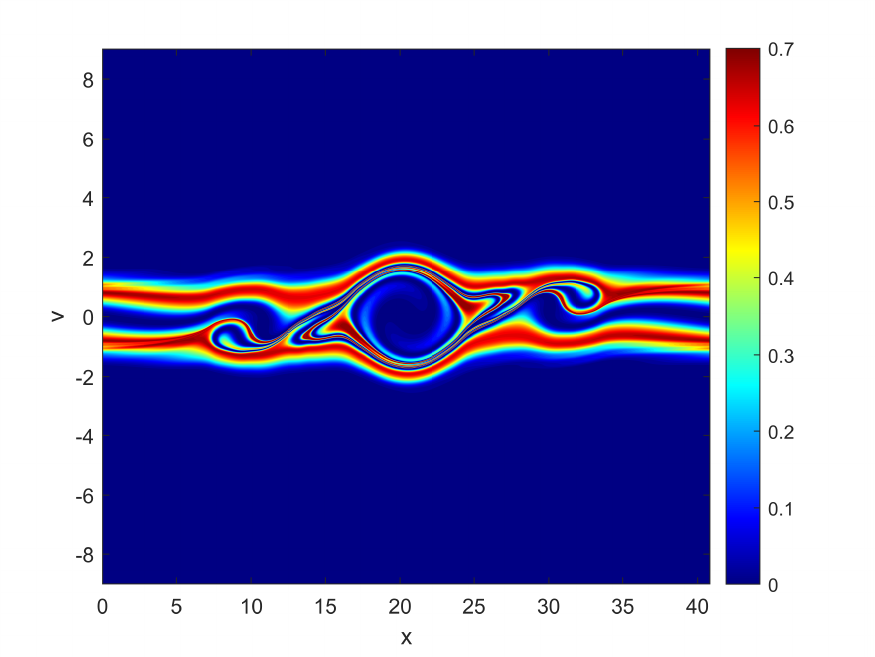}
								\subcaption{$t=40$} 
							\end{subfigure}		
							\begin{subfigure}[b]{0.32\linewidth}
								\includegraphics[width=1\linewidth]{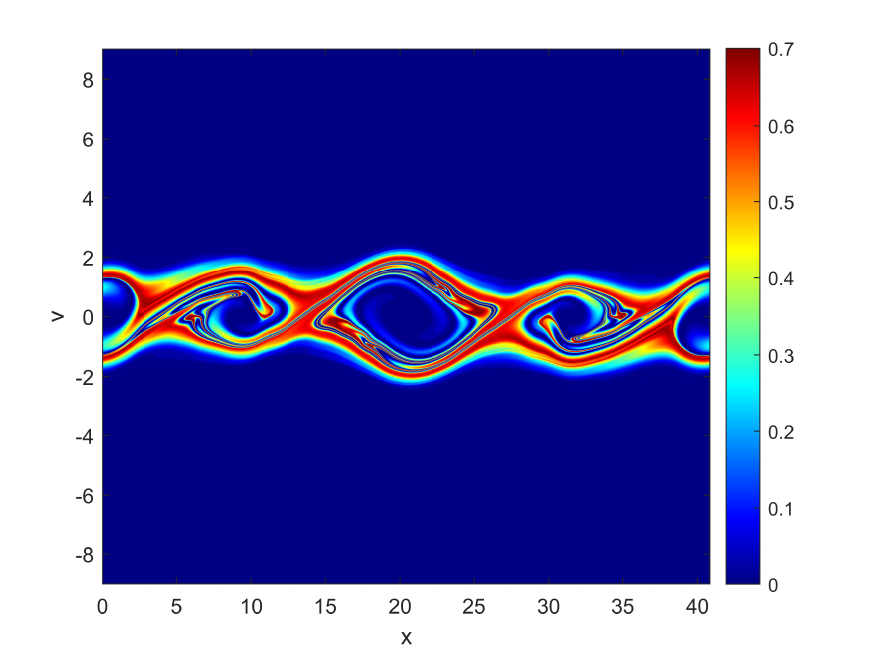}
								\subcaption{$t=50$} 
							\end{subfigure}				
							\caption{Symmetric two-stream Instability. Phase space profile of $f$.}\label{symtwostream phase}
						\end{figure}

						\subsection{Bump-on-tail Instability}				
                        The last example is the unstable bump-on-tail problem. The initial distribution function is given by 
\begin{equation*}
	f_0(x, v) = \left( 0.9 f_M(v; 1) + 0.2 f_M(v-u; v_{th}) \right) (1 + \alpha \cos(kx)),
\end{equation*}
where $\alpha=0.04$, $k=0.3$, $v_{th}=0.5$, and $u=4.5$. The spatial domain is $[0,20\pi/3]$, and we truncate the velocity domain to $[-13,13]$. This test is particularly challenging for standard spectral methods because it requires a significantly large velocity domain to encompass the fast-moving beam. In a fixed-grid approach, this results in a massive waste of grid points in empty regions where the distribution function is nearly zero.                        

						In Figures \ref{Fig bump 1}-\ref{bot cpu}, it is observable that the proposed adaptive method shows good performance in the preservation of conservative variables. Consistent with the prior instability tests, the simulations reach the hardware memory limit in a finite time around $t_m \approx 25$. Beyond this critical point, restricted grid expansion leads to the expected gradual decay in the $L^2$-norm. Once again, the discrete total mass remains strictly invariant up to machine precision throughout the entire simulation, unaffected by grid resolution. The numerical results in Figures \ref{bot N} support the efficiency of the proposed method. Finally, the corresponding phase space profiles can be found in Figure \ref{bump phase}.

							\begin{figure}[h]
							\centering
							\begin{subfigure}[b]{0.4\linewidth}
								\includegraphics[width=1\linewidth]{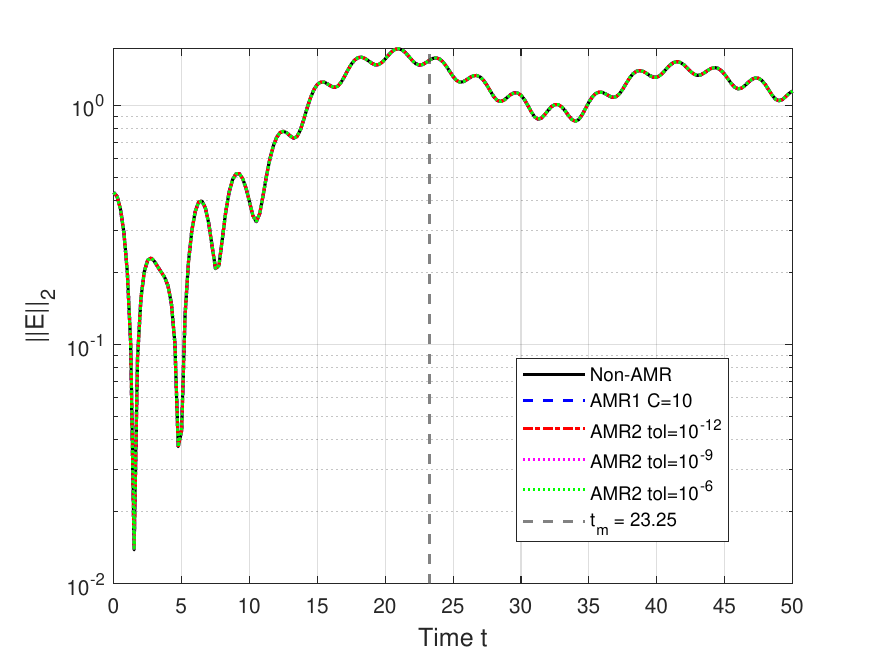}
								\subcaption{$L^2$-norm of $E$\\ \hspace{3mm}} \label{bot a}
							\end{subfigure}	
						\begin{subfigure}[b]{0.4\linewidth}
							\includegraphics[width=1\linewidth]{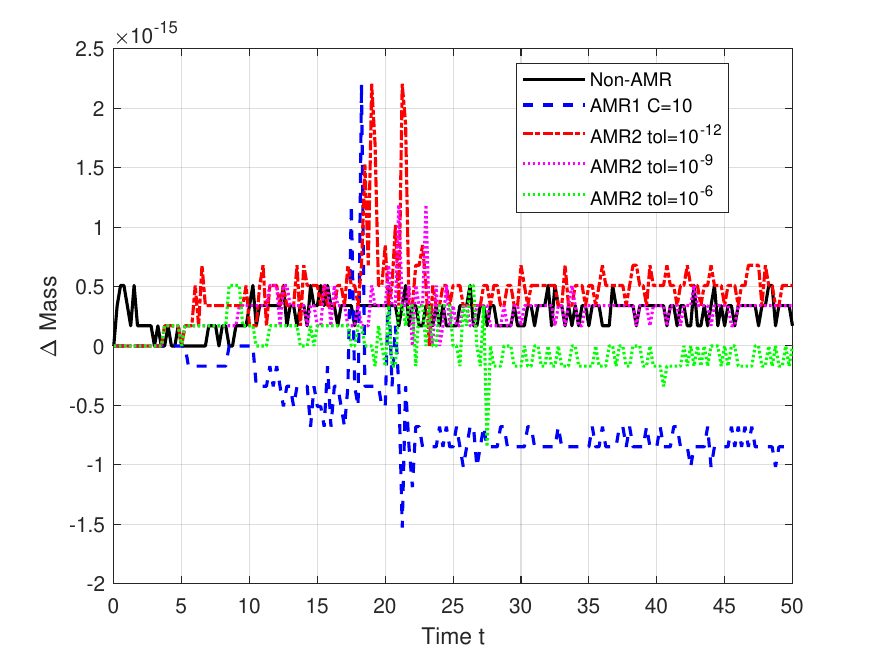}
							\subcaption{$\displaystyle \frac{\int f dvdx - \int f_0 dvdx}{\int f_0 dvdx}$}\label{bot b}
						\end{subfigure}	
						\begin{subfigure}[b]{0.4\linewidth}
						\includegraphics[width=1\linewidth]{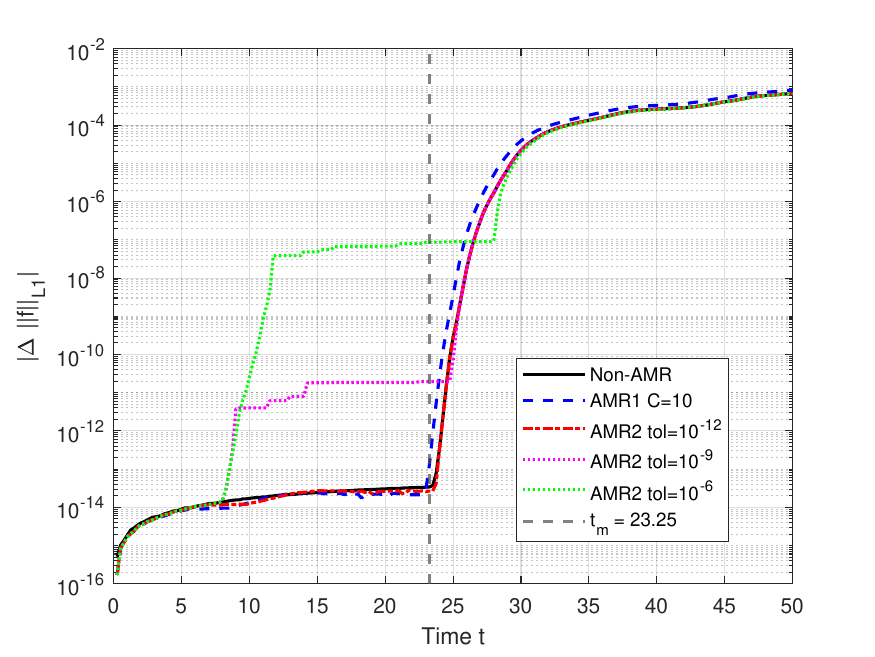}
						\subcaption{ $\displaystyle\frac{\|f^n\|_1-\|f^0\|_1}{\|f^0\|_1}$}\label{bot c}
					\end{subfigure}	
							\begin{subfigure}[b]{0.4\linewidth}
								\includegraphics[width=1\linewidth]{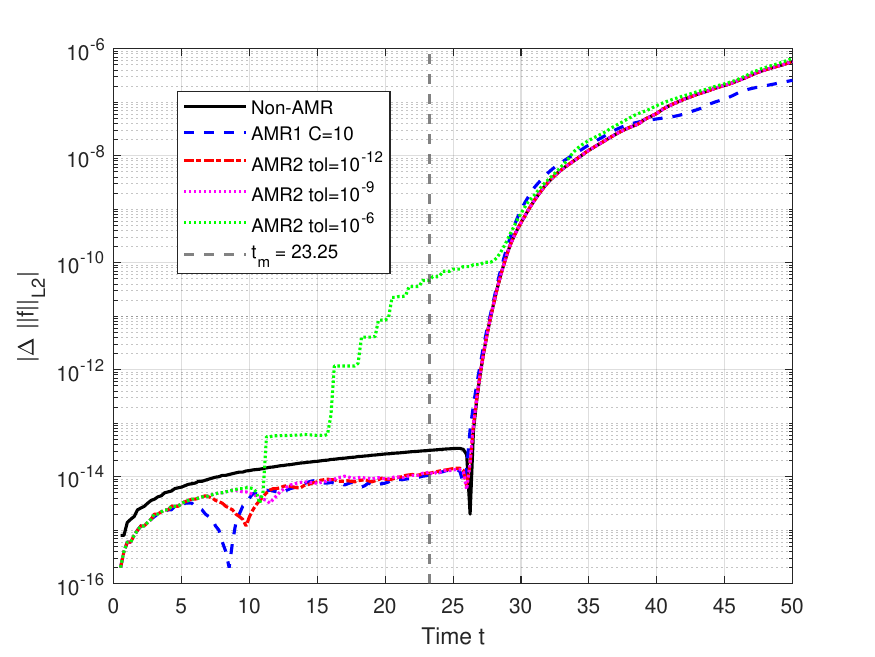}
								\subcaption{$\displaystyle\frac{\|f^n\|_2-\|f^0\|_2}{\|f^0\|_2}$}\label{bot d}
							\end{subfigure}	
							\begin{subfigure}[b]{0.4\linewidth}
							\includegraphics[width=1\linewidth]{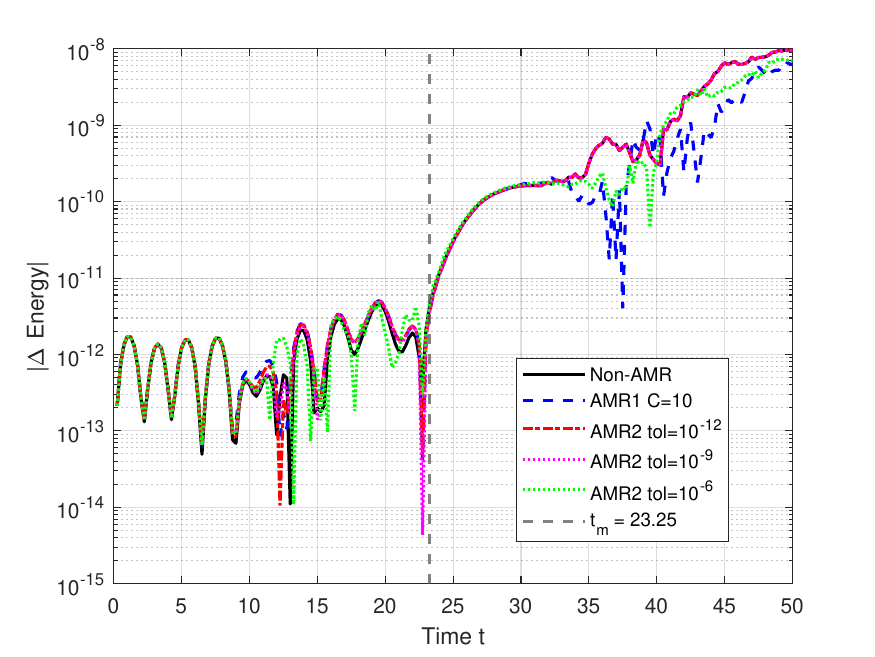}
							\subcaption{$\displaystyle \frac{Energy(t)-Energy(0)}{Energy(0)}$}\label{bot e}
						\end{subfigure}	
							\begin{subfigure}[b]{0.4\linewidth}
								\includegraphics[width=1\linewidth]{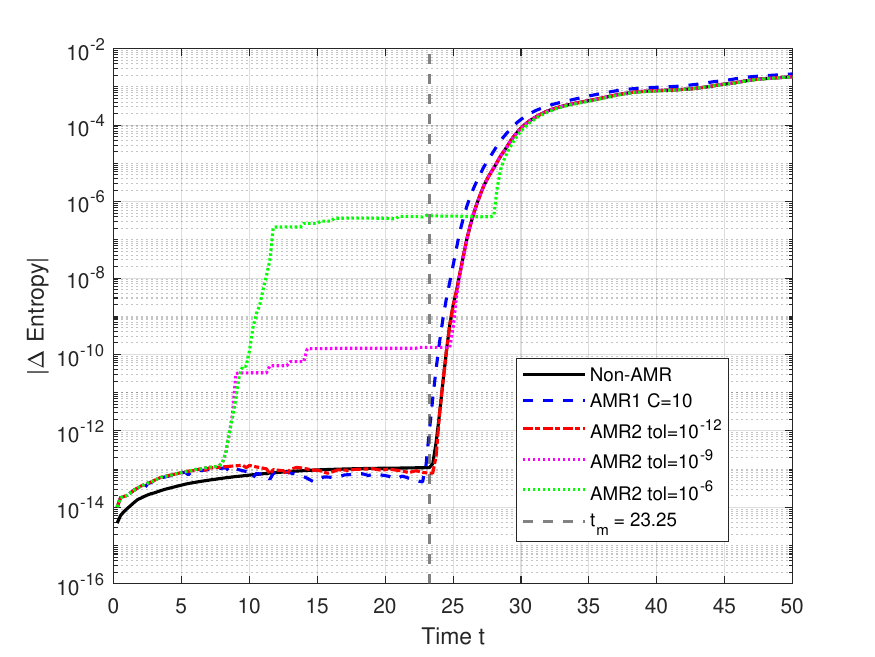}
								\subcaption{$\displaystyle \frac{Entropy(t)-Entropy(0)}{Entropy(0)}$}\label{bot f}
							\end{subfigure}			
							\caption{Bump-on-tail instability.}\label{Fig bump 1}
						\end{figure}
						
						\begin{figure}[h]
							\centering
							\begin{subfigure}[b]{0.4\linewidth}
								\includegraphics[width=1\linewidth]{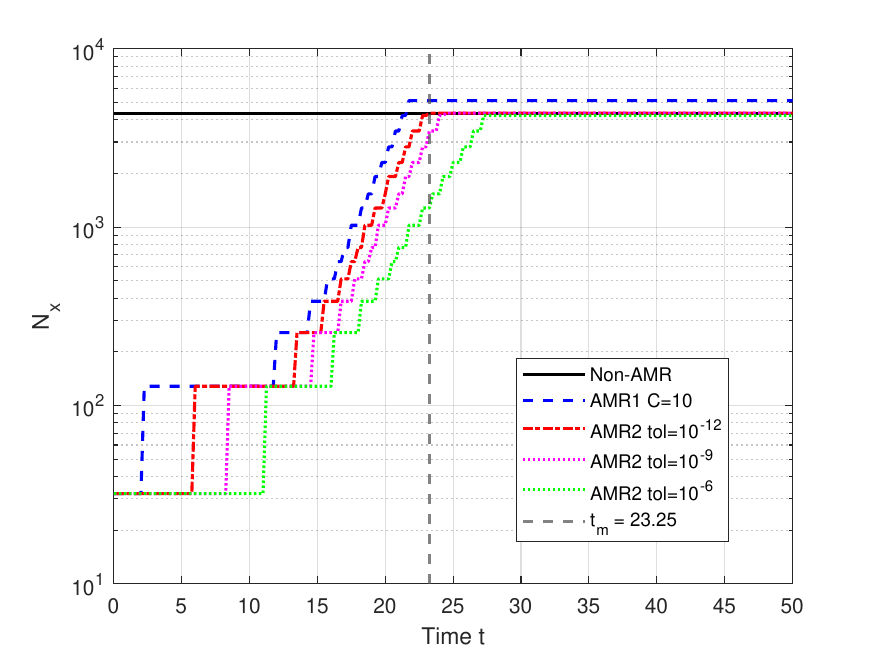}
								\subcaption{$N_x$} 
							\end{subfigure}	
							\begin{subfigure}[b]{0.4\linewidth}
								\includegraphics[width=1\linewidth]{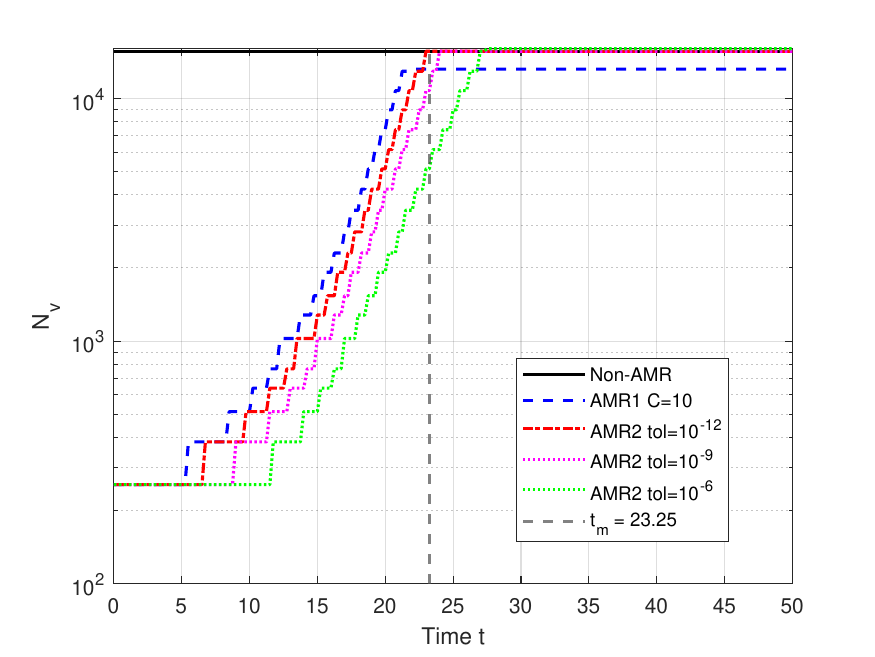}
								\subcaption{$N_v$} 
							\end{subfigure}	
							\begin{subfigure}[b]{0.4\linewidth}
								\includegraphics[width=1\linewidth]{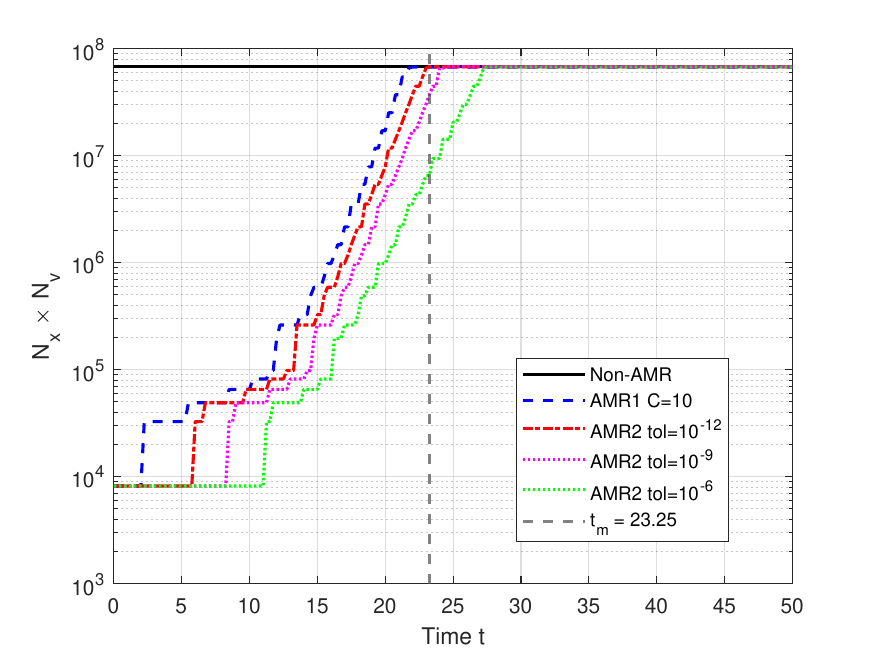}
								\subcaption{$N_x \times N_v$} 
							\end{subfigure}
                            \begin{subfigure}[b]{0.4\linewidth}
								\includegraphics[width=1\linewidth]{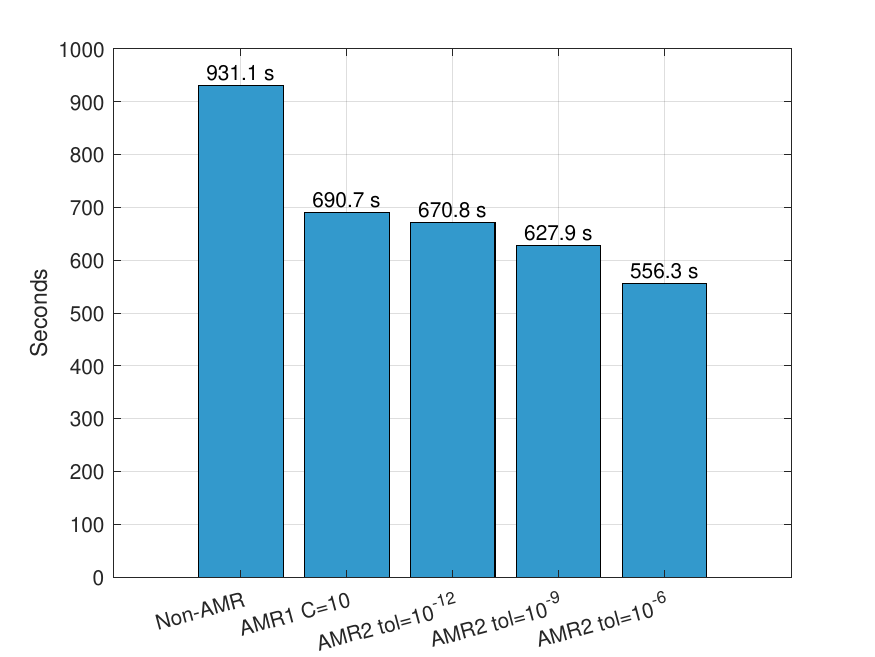}
								\subcaption{GPU time}\label{bot cpu}
							\end{subfigure}			
							\caption{Bump-on-tail instability. Time evolution of grid numbers and GPU time.}\label{bot N}
						\end{figure}

					\begin{figure}[h]
						\centering
						\begin{subfigure}[b]{0.32\linewidth}
							\includegraphics[width=1\linewidth]{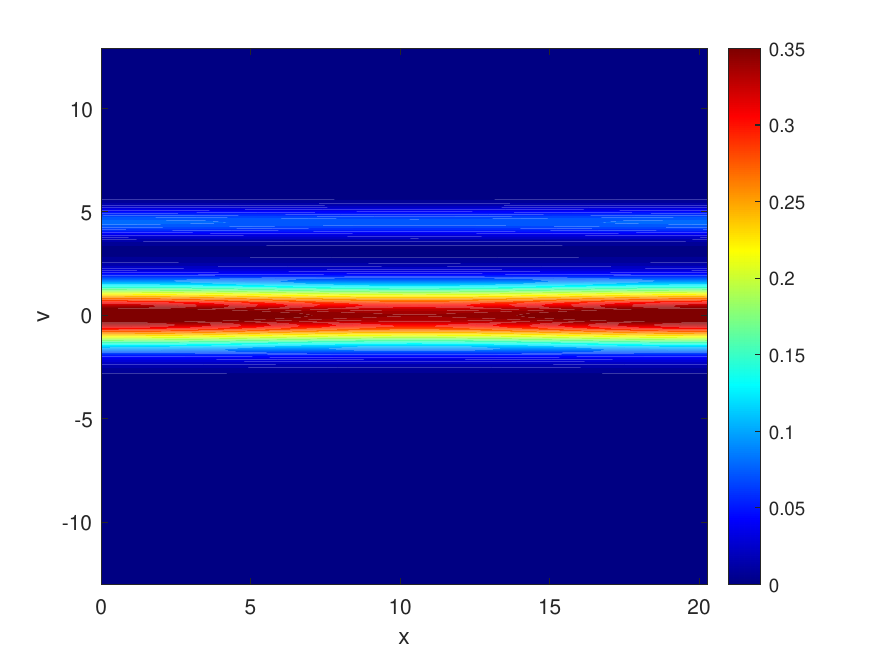}
							\subcaption{$t=0$} 
						\end{subfigure}
						\begin{subfigure}[b]{0.32\linewidth}
							\includegraphics[width=1\linewidth]{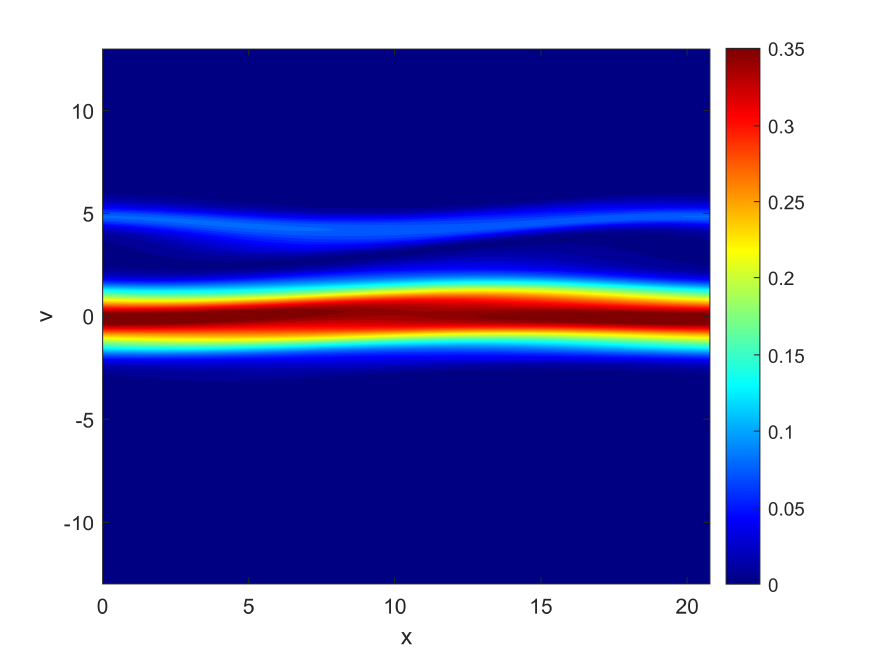}
							\subcaption{$t=10$} 
						\end{subfigure}
						\begin{subfigure}[b]{0.32\linewidth}
							\includegraphics[width=1\linewidth]{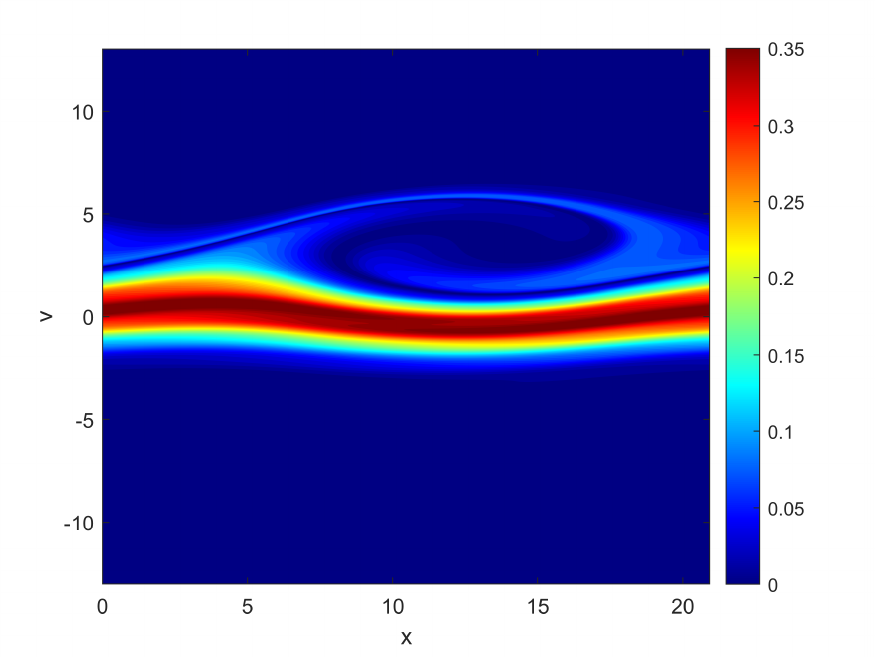}
							\subcaption{$t=20$} 
						\end{subfigure}
						\begin{subfigure}[b]{0.32\linewidth}
							\includegraphics[width=1\linewidth]{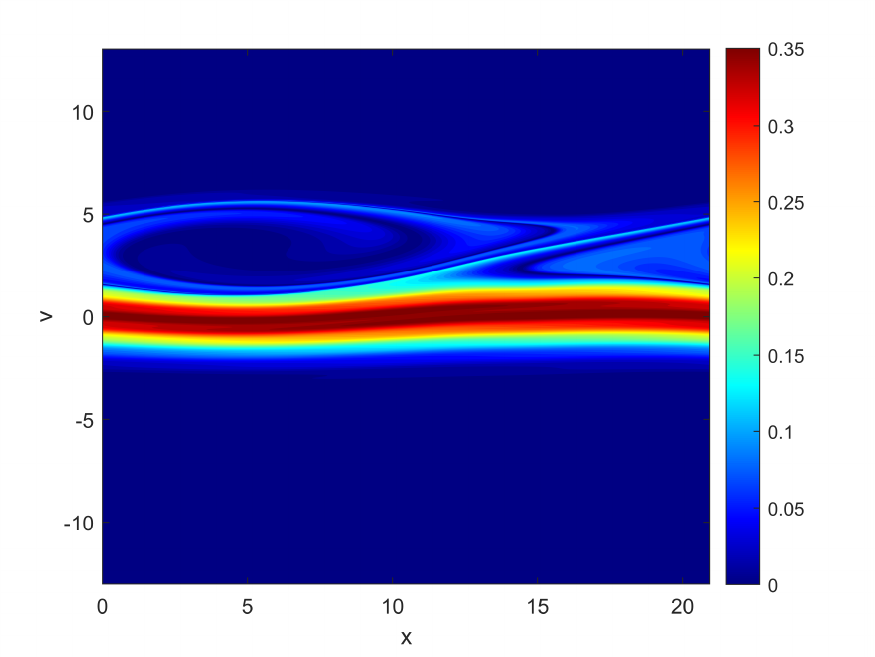}
							\subcaption{$t=30$} 
						\end{subfigure}
						\begin{subfigure}[b]{0.32\linewidth}	\includegraphics[width=1\linewidth]{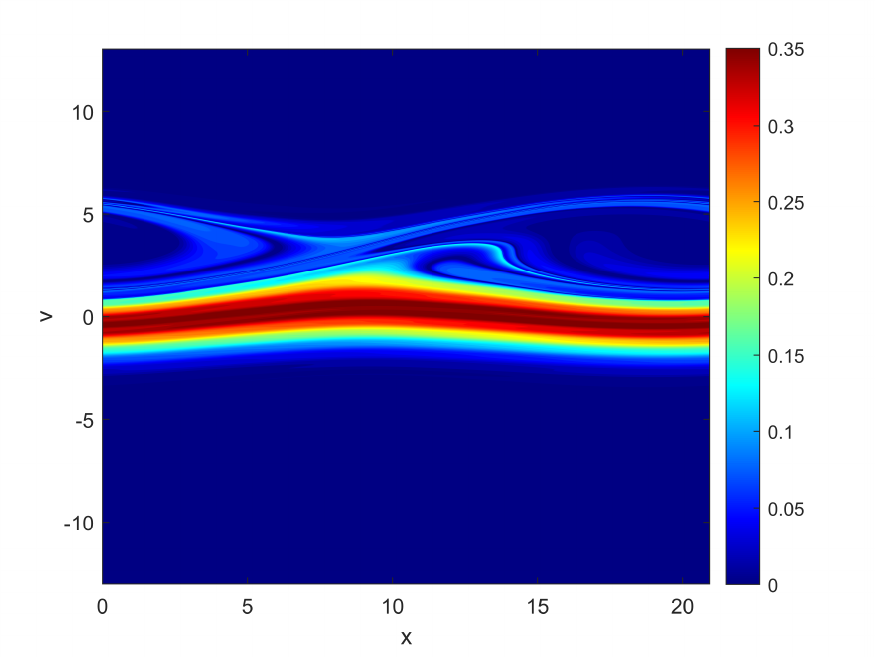}
							\subcaption{$t=40$} 
						\end{subfigure}		
						\begin{subfigure}[b]{0.32\linewidth}
							\includegraphics[width=1\linewidth]{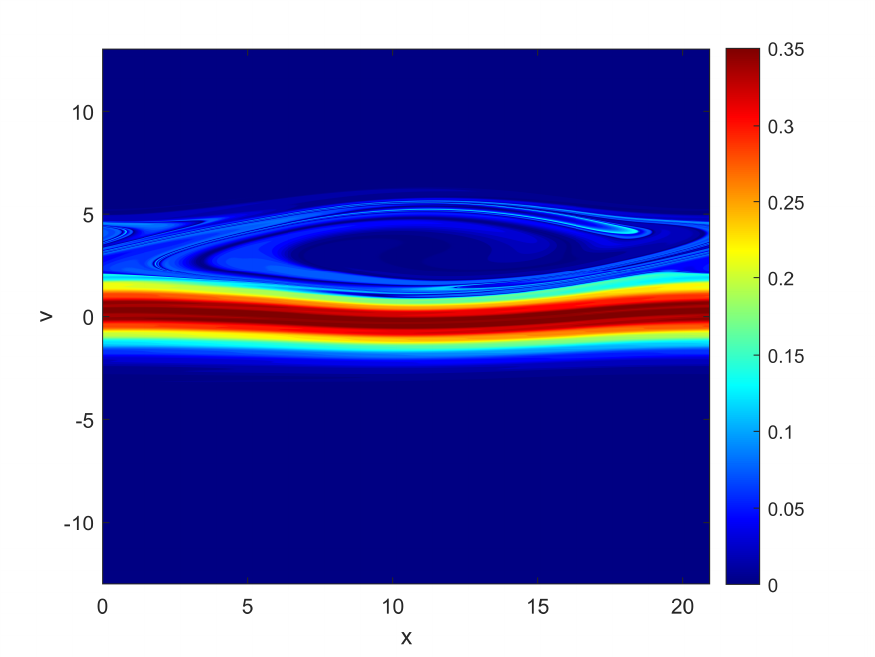}
							\subcaption{$t=50$} 
						\end{subfigure}				
						\caption{Bump-on-tail Instability. Phase space profile of $f$.}\label{bump phase}
					\end{figure}

			\section{Conclusion}
		In this paper, we proposed and analyzed an adaptive Fourier spectral method coupled with a semi-Lagrangian splitting scheme for solving the Vlasov–Poisson system. The core of our approach is an adaptive mesh refinement (AMR) technique based on dynamic zero-padding in the spectral domain, which allocates computational resources exclusively to the actively developing phase-space filamentations. By effectively avoiding the massive waste of grid points typical in standard fixed-grid methods, this approach significantly enhances overall computational efficiency. Furthermore, we mathematically established that the discrete total mass and the $L^2$-norm are exactly conserved during both the adaptive grid expansion and the phase-shift-based advection steps. Several numerical experiments, including the strong Landau damping and the bump-on-tail instability, verified these theoretical properties and demonstrated the extreme fidelity of the scheme. In these simulations, the severe filamentation inherent to the system inevitably pushes the computations to a hardware memory limit at a critical time $t_m$, preventing any further grid expansion. During the fully resolved regime ($t \le t_m$), the proposed scheme preserves all fundamental macroscopic invariants—namely, discrete total mass, $L^1$ and $L^2$ norms, total energy, and entropy—to machine precision. Once this resolution limit is exceeded ($t > t_m$), most of these physical quantities inevitably begin to deviate from their initial states due to the unresolved high-frequency modes. Remarkably, however, the discrete total mass remains a strict exception; it is mathematically conserved to machine precision throughout the entire simulation, completely unaffected by the extreme grid resolution constraints.
			

\section*{Acknowledgment}
This work has been partially supported by the project PIA.CE.RI 2024/2026 ``Control of complex systems from theory to applications”, (CoCoS), from University of Catania.  G. Russo would like to thank the Italian Ministry of University and Research (MUR) for the support of this research with funds coming from PRIN Project 2022 (N.\ 2022KA3JBA entitled  ``Advanced numerical methods for time dependent parametric partial differential equations with applications''). G. Russo is a member of the INdAM Research group GNCS. S. Y. Cho was supported by the National Research Foundation of Korea(NRF) grant funded by the Korea government(MSIT) (RS-2026-25493252).

\bibliographystyle{amsplain}

\begin{thebibliography}{10}


\bibitem{A} Adams, M. F. (2026). Semi-Lagrangian Discontinuous Galerkin Method with Adaptive Mesh Refinement for the Vlasov--Poisson System in 1X+ 3V. arXiv preprint arXiv:2603.19959.
	
\bibitem{AV} Arber, T. D., and Vann, R. G. L. (2002). A critical comparison of Eulerian-grid-based Vlasov solvers. Journal of computational physics, 180(1), 339-357.



\bibitem{BH} Banks, J. W.,  Hittinger, J. A. F. (2010). A new class of nonlinear finite-volume methods for Vlasov simulation. IEEE Transactions on Plasma Science, 38(9), 2198-2207.


\bibitem{BM} Besse, N., Mehrenberger, M. (2008). Convergence of classes of high-order semi-Lagrangian schemes for the Vlasov–Poisson system. Mathematics of computation, 77(261), 93-123.

\bibitem{BL} Birdsall, C. K., Langdon, A. B. (2018). Plasma physics via computer simulation. CRC press.


\bibitem{CGQ} Cai, X., Guo, W.,  Qiu, J. M. (2018). A high order semi-Lagrangian discontinuous Galerkin method for Vlasov–Poisson simulations without operator splitting. Journal of Computational Physics, 354, 529-551.


\bibitem{CCFM} Casas, F., Crouseilles, N., Faou, E.,  Mehrenberger, M. (2017). High-order Hamiltonian splitting for the Vlasov–Poisson equations. Numerische Mathematik, 135(3), 769-801.

\bibitem{CK} Cheng, C. Z., Knorr, G. (1976). The integration of the Vlasov equation in configuration space. Journal of Computational Physics, 22(3), 330-351.

\bibitem{CBRY} Cho, S. Y., Boscarino, S., Russo, G., Yun, S. B. (2021). Conservative semi-Lagrangian schemes for kinetic equations Part II: Applications. Journal of Computational Physics, 436, 110281.

\bibitem{CL} Crouseilles, N.,  Lemou, M. (2011). An asymptotic preserving scheme based on a micro-macro decomposition for collisional Vlasov equations: diffusion and high-field scaling limits. Kinetic and related models, 4(2), 441-477.

\bibitem{CRS} Crouseilles, N., Respaud, T.,  Sonnendr{\"u}cker, E. (2009). A forward semi-Lagrangian method for the numerical solution of the Vlasov equation. Computer Physics Communications, 180(10), 1730-1745.



\bibitem{DDSV} Degond, P., Deluzet, F., Navoret, L., Sun, A. B., Vignal, M. H. (2010). Asymptotic-preserving particle-in-cell method for the Vlasov–Poisson system near quasineutrality. Journal of Computational Physics, 229(16), 5630-5652.







\bibitem{EL} Einkemmer, L., Lubich, C. (2018). A low-rank projector-splitting integrator for the Vlasov--Poisson equation. SIAM Journal on Scientific Computing, 40(5), B1330-B1360.


\bibitem{FMP} Filbet, F., Mouhot, C.,  Pareschi, L. (2006). Solving the Boltzmann equation in N log2 N. SIAM Journal on Scientific Computing, 28(3), 1029-1053.

\bibitem{FS} Filbet, F., Sonnendrücker, E. (2003). Comparison of Eulerian vlasov solvers. Computer Physics Communications, 150(3), 247-266.


\bibitem{FSB} Filbet, F., Sonnendrücker, E., Bertrand, P. (2001). Conservative numerical schemes for the Vlasov equation. Journal of Computational Physics, 172(1), 166-187.

\bibitem{FR} Filbet, F.,  Russo, G. (2003). High order numerical methods for the space non-homogeneous Boltzmann equation. Journal of Computational Physics, 186(2), 457-480.

\bibitem{FX} Filbet, F.,  Xiong, T. (2022). Conservative discontinuous Galerkin/Hermite spectral method for the Vlasov–Poisson system. Communications on Applied Mathematics and Computation, 4(1), 34-59.

\bibitem{GV} Ganguly, K., Victory, Jr, H. D. (1989). On the convergence of particle methods for multidimensional Vlasov–Poisson systems. SIAM journal on numerical analysis, 26(2), 249-288.


\bibitem{GQ} Guo, W., Qiu, J. M. (2024). A conservative low rank tensor method for the Vlasov dynamics. SIAM Journal on Scientific Computing, 46(1), A232-A263.

\bibitem{HE}
Hockney, R. W., Eastwood, J. W., Computer simulation using particles. Taylor \& Francis, 1988.



\bibitem{KF} Klimas, A. J., Farrell, W. M. (1994). A splitting algorithm for Vlasov simulation with filamentation filtration. Journal of Computational physics, 110(1), 150-163.


\bibitem{K} Kormann, K. (2015). A semi-Lagrangian Vlasov solver in tensor train format. SIAM Journal on Scientific Computing, 37(4), B613-B632.


\bibitem{MP} Mouhot, C.,  Pareschi, L. (2006). Fast algorithms for computing the Boltzmann collision operator. Mathematics of computation, 75(256), 1833-1852.


\bibitem{PP} Pareschi, L., Perthame, B. (1996). A Fourier spectral method for homogeneous Boltzmann equations. Transport Theory and Statistical Physics, 25(3-5), 369-382.

\bibitem{PR} Pareschi, L.,  Russo, G. (2000). Numerical solution of the Boltzmann equation I: Spectrally accurate approximation of the collision operator. SIAM journal on numerical analysis, 37(4), 1217-1245.

\bibitem{PRT} Pareschi, L., Russo, G.,  Toscani, G. (2000). Fast spectral methods for the Fokker–Planck–Landau collision operator. Journal of Computational Physics, 165(1), 216-236.



\bibitem{QR} Qiu, J. M., Russo, G. (2017). A high order multi-dimensional characteristic tracing strategy for the Vlasov–Poisson system. Journal of Scientific Computing, 71(1), 414-434.


\bibitem{QS} Qiu, J. M., Shu, C. W. (2011). Conservative semi-Lagrangian finite difference WENO formulations with applications to the Vlasov equation. Communications in Computational Physics, 10(4), 979-1000.





\bibitem{RS} Rossmanith, J. A.,  Seal, D. C. (2011). A positivity-preserving high-order semi-Lagrangian discontinuous Galerkin scheme for the Vlasov–Poisson equations. Journal of Computational Physics, 230(16), 6203-6232.

\bibitem{SKT} Sandberg, R. T., Krasny, R., Thomas, A. G. (2025). The FARSIGHT Vlasov-Poisson code. Journal of Computational Physics, 523, 113664.


\bibitem{SWW} Shao, S., Wang, Y., Wu, J. (2026). Solving Vlasov-Poisson system with an adaptive Hermite spectral method. arXiv preprint arXiv:2605.17820.


\bibitem{SRBG} Sonnendrücker, E., Roche, J., Bertrand, P.,  Ghizzo, A. (1999). The semi-Lagrangian method for the numerical resolution of the Vlasov equation. Journal of Computational physics, 149(2), 201-220.


\bibitem{ZHCQ} Zheng, N., Hayes, D., Christlieb, A., and Qiu, J. M. (2025). A semi-Lagrangian adaptive-rank (SLAR) method for linear advection and nonlinear Vlasov-Poisson system. Journal of Computational Physics, 532, 113970.

\end{thebibliography}
			
\end{document}